\documentclass[11pt,oneside,reqno]{amsart}
\usepackage[all]{xy}
\usepackage{amsfonts,amsmath,amssymb,amscd}
\usepackage[breaklinks]{hyperref}
\usepackage{caption} 
\captionsetup{belowskip=12pt,aboveskip=4pt}
\usepackage{mathrsfs}
\usepackage{color}
\usepackage{amsmath}

\definecolor{blue}{rgb}{0,0,1}
\definecolor{red}{rgb}{1,0,0}
\definecolor{green}{rgb}{0,.6,.2}
\definecolor{purple}{rgb}{1,0,1}

\long\def\red#1\endred{{\color{red}#1}}
\long\def\blue#1\endblue{{\color{blue}#1}}
\long\def\purple#1\endpurple{{\color{purple}#1}}
\long\def\green#1\endgreen{{\color{green}#1}}

\allowdisplaybreaks
 
\setlength{\oddsidemargin}{0.40in}
\setlength{\evensidemargin}{0.0in}
\setlength{\textwidth}{6in}
\setlength{\textheight}{8.4in}
\setlength{\parskip}{0.15in}

\newcommand{\sm}{\left(\begin{smallmatrix}}
\newcommand{\esm}{\end{smallmatrix}\right)}

\renewcommand{\(}{\left\(}
\renewcommand{\)}{\right\)}
\renewcommand{\[}{\left\[}
\renewcommand{\]}{\right\]}

\numberwithin{equation}{section}

\newtheorem{theorem}{Theorem}
\newtheorem{proposition}{Proposition}

\newtheorem{lemma}{{\bf Lemma}}
\newtheorem{coro}[subsection]{{\bf Corollary}}
\newtheorem{definition}{Definition}

\newtheorem{remark}{Remark}
\newtheorem{conjecture}[subsection]{Conjecture}
\newtheorem{example}{Example}

\numberwithin{theorem}{section}
\numberwithin{corollary}{section}
\numberwithin{example}{section}
\numberwithin{proposition}{section}
\numberwithin{remark}{section}
\numberwithin{definition}{section}
\numberwithin{lemma}{section}
\allowdisplaybreaks

\begin{document}
\title[Kronecker second limit formula for real quadratic fields]{ Kronecker second limit formula for real quadratic fields}

\author{YoungJu Choie}
\address{Department of Mathematics, Pohang University of Science and Technology \\
Pohang, Republic of Korea}
\email{yjc@postech.ac.kr}
\author{Rahul Kumar}
\address{Department of Mathematics, Indian Institute of Technology, Roorkee-247667, Uttarakhand, India}
\email{rahul.kumar@ma.iitr.ac.in} 

  \subjclass[2020]{11R11,  11R42, 11F67,  11M35}
  \keywords{Second Kronecker limit formula, Functional equations, Herglotz-Zagier-Novikov function, Zagier's zeta function, Dedekind zeta values}


\maketitle
\pagenumbering{arabic}
\pagestyle{headings}
\begin{abstract}
In this paper, the second Kronecker ``limit" formula for a real quadratic field is established for the first time. More precisely, we obtain the second Kronecker limit formula of Zagier's zeta function. Using   the   reduction theory of Zagier, which connects   Zagier's zeta function to the zeta function of real quadratic fields,  we express the values of the zeta function of narrow ideal classes in real quadratic fields at natural arguments in terms of an analytic function which we call the \emph{higher Herglotz-Zagier-Novikov function} and denote it by $\mathscr{F}_k(x; \alpha, \beta)$. This function plays a central role in our study. The function $\mathscr{F}_k(x; \alpha, \beta)$ possesses elegant properties, for example, we prove that it satisfies the  two, three and six-term functional equations. As a result of our Kronecker limit formula and functional equations, we provide another expression for the combinations of zeta values. Finally, we interpret  our  Kronecker ``limit" formula in terms of cohomological relations and establish a connection between $\mathscr{F}_k(x; \alpha, \beta)$  and a generalized Dedekind-eta function.
\end{abstract}
\tableofcontents

\today

\section{Introduction and motivation}

The classical first and second limit formulas of Kronecker have many applications, not only in number theory but also in   physics.   The Kronecker limit formulas are concerned with the constant term in the Laurent series expansion of certain Dirichlet series at $s=1$. Kronecker \cite{Kro} derived such a formula for the Dedekind zeta function over imaginary quadratic fields for the first time. He also provided several applications of his first limit formula, one of which is a solution to Pell's equation. In his lectures \cite{Se},  Siegel gives  a delightful explanation of   the   Kronecker limit formulas and their significance in analytic and algebraic number theory, while Weil \cite{weil} traces their historical developments. These two formulas play a vital role in the work of Stark \cite{stark1}, who studied the values of $L$-functions at $s=1$ and formulated his namesake conjecture, \emph{Stark's conjecture}. Stark further explored this topic in his other papers in this series \cite{stark2, stark3, stark4}.
 
In his famous paper \cite{Z},  Zagier developed the first Kronecker limit formula for real quadratic fields. In his work, he encountered a universal function, later called    the   \emph{Herglotz function} in \cite{RZ}
\begin{align}\label{her int}
F(x)=\int_0^1\left(\frac{1}{1-t}+\frac{1}{\log{t}}\right) \log(1-t^x) \frac{dt}{t},\quad  \mathrm{Re}(x)>0.
\end{align}
Vlasenko and Zagier \cite{VZ} studied    the    \emph{higher Herglotz function} $\mathscr{F}_k(x)$ to derive    a    higher Kronecker ``limit" formula for real quadratic fields (see \eqref{hk} below for the definition of $\mathscr{F}_k(x)$). 
Prior to Zagier, the first Kronecker limit formulas for real quadratic fields were discovered by Hecke \cite{hecke} and Herglotz \cite{H}. 


 While there has been extensive development for the first Kronecker limit formula and its applications (see \cite{bg, Cho-Sel, hecke, H, novikov, ram, sh, sh2, sh1, Z}),   the    literature on the second Kronecker  limit formula is relatively limited. Apart from Kronecker's original work \cite{Kro} and references like \cite{ram,Se}, recent contributions include Kopp's paper \cite{kopp} for indefinite zeta functions (also covered in his PhD thesis \cite{kopp thesis}). Our primary focus in this paper is on the second Kronecker limit formula for real quadratic fields.

In this paper, we obtain Kronecker's second  and higher ``limit" formulas for real quadratic fields for the first time. More precisely, we obtain the second Kronecker limit formula of the Zagier's zeta function, which is defined in \eqref{zagier zeta function} below. Using   the   reduction theory of Zagier, which connects  the Zagier's zeta function $\mathcal{Z}(s,(\alpha,\beta),\mathscr{B})$ to the zeta function of real quadratic fields $\zeta(s,(\alpha,\beta),\mathscr{B})$,  we express the values of the zeta function of narrow ideal classes in real quadratic fields at natural arguments in terms of an analytic function $\mathscr{F}_k(x; \alpha, \beta)$ which we call the \emph{higher Herglotz-Zagier-Novikov function} (defined in \eqref{int repres for hh} below).  This function plays a pivotal role in our study and is  an extension of  the Herglotz function $F(x)$, introduced by Zagier \cite{Z} to study the first Kronecker limit formula of real quadratic fields. Moreover, the higher Herglotz function $\mathscr{F}_k(x)$, which was studied by Vlasenko and Zagier in \cite{VZ} to study higher limit formulas, is also a limiting case of $\mathscr{F}_k(x;\alpha,\beta)$ as $\alpha, \beta\to 0$ up to certain correction terms (see Proposition \ref{special cases} below).


We have also examined  properties of $ \mathscr{F}_k(x;\alpha,\beta)$ such as two-term, three-term and six-term  functional equations, asymptotic expansions, etc,    in this paper. As applications of our Kronecker limit formula and functional equations, we provide   an    interpretation of our limit formulas in terms of cohomological relations, and evaluate rational zeta values.

\medskip

\section{The second Kronecker limit formula}\label{appl to klf}

Consider the Dedekind zeta function $\zeta_K(s)$ of   a   number field $K$:
$$\zeta_K(s)=\sum_A\zeta(s,A),$$ where
$A$ runs over the ideal class group of $K $ and 
\begin{eqnarray*}
\zeta(s, A)=\sum_{a\in A}\frac{1}{N(a)^s}\, \, \,\, (\mathrm{Re}(s)>1).\end{eqnarray*}

It is well known   that  
\begin{enumerate}
\item $\zeta_K(s)$ converges absolutely and uniformly when $\mathrm{Re}(s)>1.$

\item $\zeta_K(s)$ has an analytic continuation to $\mathbb{C}$ except at $s=1 ,$ where it has a simple pole.

\item It satisfies   a   functional equation
 relating $s$ to $(1-s).$
 
\end{enumerate}

Consider the Laurent expansion of $\zeta(s, A)$ at $s=1$ 
$$\zeta(s,A)=\frac{\kappa}{s-1}+ {\varrho}(A) +\varrho_1(A)(s-1)+\cdots.$$

The value of $\varrho(A)$ for $K$ being an imaginary quadratic field was initially computed by Kronecker \cite{Kro}. As a result, it is commonly referred to as the \textit{first Kronecker limit formula}. See \cite{Se}, \cite{Z} for more details. 
%
%
%
Meanwhile, the \textit{second Kronecker  limit formula} for an imaginary quadratic field pertains to the zeta function \cite{Se}\footnote{{The first Kronecker  limit formula essentially deals with the case where $\alpha$ and $\beta$ in \eqref{qzeta1} are integers. In this case, the zeta function $\zeta_Q(s, (\alpha, \beta))$  is nothing but the classical real analytic Eisenstein series and it  reduces to the partial zeta function $\zeta(s,A)$ for an imaginary quadratic field. For further details, refer to \cite{Se} or \cite{Z}.}} 
\begin{align}\label{qzeta1}
\zeta(s,\tau, (\alpha, \beta))=\mathrm{Im}(\tau)^s{\sum_{p,q\in \mathbb{Z}}}^{'}\frac{e^{2\pi i (p\alpha+q\beta)}}{|p\tau+q|^{2s}},\  \mathrm{Im}(\tau)>0,\ (\alpha,\beta)\in\mathbb{R}^2\backslash\mathbb{Z}^2, 
\end{align}
where   the   summation runs over all ordered pairs of integers $(p,q)$ except $(0,0)$.  The zeta function $\zeta(s,\tau, (\alpha, \beta))$, also referred to as the twisted real analytic Eisenstein series, is an analytic function in the half-plane Re$(s)>1/2$. The second  Kronecker limit formula is then expressed as follows \cite{Kro}, \cite[p.~40, Theorem 2]{Se}:  
\begin{align}\label{2nd imaginary}
\zeta(1,\tau, (\alpha, \beta)) 
=2\pi^2  \beta^2  \mathrm{Im}( \tau )-2\pi \log{  \left|
 \frac{\vartheta_1(  z_\tau, \tau )}{\eta(\tau)} \right| },
\end{align}
where 
 $z_\tau=\alpha-\beta \tau \in \mathbb{C}$ and $\eta(\tau)$ is the Dedekind eta function 
\begin{align*}
\eta(\tau)&=q^{\frac{1}{24}}\prod_{n\geq1}\bigl(1-q^n\bigr),\ q=e^{2\pi i \tau}, \  \text{Im}(\tau)>0 
\end{align*}
and $\vartheta_1(z,  \tau)$ is the elliptic theta-function defined as \cite[p.~40]{Se}
\begin{align*}
\vartheta_1(z, \tau) &=-i\xi^{\frac{1}{8}}\left(\xi^{\frac{1}{2}}-\xi^{-\frac{1}{2}}\right)  \prod_{m\geq 1}(1-q^m\xi )(1-q^{ m}\xi^{-1}   )(1-q^m),\ 
 \xi=e^{2\pi i z},\ z\in\mathbb{C}.
\end{align*} 

 %
 
On the other hand, when $K$ is a real quadratic field, Hecke \cite{hecke} was   the    pioneer who derived the  first Kronecker  limit formula  in terms of the integral of the Dedekind eta function $\log|\eta(\tau)|.$ In $1975$,  Zagier \cite{Z}  found a more explicit formula compared to Hecke's for the real quadratic case. His formula involves a universal function known as the Herglotz function $F(x)$, as defined in \eqref{her int}. This result was further generalized  to evaluate the higher zeta values  and to give   a    cohomological interpretation of the  higher Kronecker limit formula in \cite{VZ}. However, to the best of our knowledge, a second Kronecker limit formula for real quadratic fields, serving as the counterpart to \eqref{2nd imaginary} in the imaginary quadratic field setting, is absent from the existing literature. In what follows, we aim to address this gap.

In order to establish our result on the second Kronecker limit formula for real quadratic fields, it is necessary to provide a summary of key concepts and background knowledge. For further details, we refer the reader to Zagier's influential paper \cite{Z}, which offers a comprehensive treatment of these concepts. It is well known that there is a bijection between a set of ideals in $A$ and the set of principal ideals  in the following way:  
for a fixed ideal $\underline{b}$ in the class $A^{-1},$ the correspondence 
\begin{eqnarray*}
\mathfrak{a} \longrightarrow \mathfrak{a}\underline{b}=\mbox{some principal ideal $(\lambda)$}, \lambda\in \underline{b}.
\end{eqnarray*} 
The numbers $\lambda_1$ and $\lambda_2$ in the ideal $\underline{b}$ define the same principal ideal if and only if $\lambda_1$ can be written as $\epsilon \lambda_2$, where $\epsilon$ is a unit. This is equivalent to saying that $\lambda_1$ and $\lambda_2$ have the same image in $\underline{b}/U$, where $U$ is the group of units in the ring of integers $O_K$. Hence, the partial zeta function can be expressed as follows:
\begin{eqnarray}\label{ide}
\zeta(s,A)=N(\underline{b})^s\sum_{a\in A}\frac{1}{N(a\underline{b})^s}
=N(\underline{b})^s\sideset{}{'}\sum_{\lambda \in \underline{b} / U}
\frac{1}{|N(\lambda)|^s},
 \end{eqnarray}
where the prime on the summation indicates that the term with a value of $0$ is to be excluded. When $K=\mathbb{Q}(\sqrt{D})$ is a real quadratic field with discriminant $D>0,$ 
let $\epsilon$ be a fundamental unit (the smallest unit $>1,$ where we have fixed once and for all an embedding $K\subset \mathbb{R}).$ Then $U=\{\pm \epsilon^n :  n\in \mathbb{Z}\}$ so that
(\ref{ide}) gives 
$$2 \zeta(s, A)=N(\underline{b})^s\sideset{}{'}\sum_{\lambda\in \underline{b} / \epsilon} \frac{1}{|\lambda \lambda'|^s}.$$
Further, assuming that $K$ does not contain any unit of negative norm, it is known that each ideal class $A$ can be represented as the union of two disjoint narrow ideal classes, denoted as $A = \mathscr{B} \cup \mathscr{B}^*$. Here, $\mathscr{B}$ and $\mathscr{B}^*$ are narrow ideal classes, where two ideals $a$ and $\underline{b}$ belong to the same narrow ideal class if there exists a principal ideal $(\alpha)$ with $N(\alpha) > 0$ such that $a = (\alpha)\underline{b}$. Notably, $\mathscr{B}^* = \Theta\mathscr{B}$, where $\Theta$ represents the narrow ideal class of principal ideals $(\alpha)$ with $N(\alpha) < 0$.


Next,  recall the relation between ideal classes and reduced numbers (page 163 in \cite{Z}):
a reduced number $w\in K $ satisfies
$$ w>1 > w' >0$$
and  has a pure periodic continued fraction expansion.  If  $\mathscr{B}$ is a narrow ideal class of $K$ with length  $r=\ell(\mathscr{B})$  and cycle $((b_1, \cdots, b_r)),$   where $ b_i\in \mathbb{Z}, b_i \geq 2,   $ there are exactly $r$ many reduced numbers $w\in K$ for which $\{1,w\}$ is a  basis for some ideal in $\mathscr{B},$ say the numbers
 
\begin{eqnarray} \label{cont}
w_k =  b_k-\cfrac{1}{b_{k+1}-\ \genfrac{}{}{0pt}{0}{}{\ddots\genfrac{}{}{0pt}{0}{}\ \genfrac{}{}{0pt}{0}{}{-\cfrac{1}{b_{r}-\cfrac{1}{b_{ 1}-\genfrac{}{}{0pt}{0}{}{\ddots}}}}}} 
\end{eqnarray}
 for $k=1, \cdots, r.$  Let 
\begin{align}\label{redB}
\mathrm{Red}(\mathscr{B}):=\{w_1, w_2, \cdots, w_r \}
\end{align} 
be the set of all reduced forms in $\mathscr{B}.$  We may extend   the   definition to all $k$ by setting $w_k$   to  depend   only on 
$k\pmod{r}.$
Fix  the ideal $\underline{b}=\mathbb{Z}w_0+\mathbb{Z} \in \mathscr{B}^{-1}.$
Following Zagier (see page $165$ in \cite{Z}), define a sequence of numbers
$$0<\cdots<A_2<A_1<A_0<A_{-1}<A_{-2}<\cdots$$
by
\begin{eqnarray*}
&& A_k=\frac{1}{w_1w_2...w_k}, \, (k\geq 1), 
\\
&& A_0=1, \\
&& A_{-k}=w_0 w_{-1}\cdots w_{-k+1} \, (k\geq 1) .
\end{eqnarray*}
It is known  that
any number $\lambda\in \underline{b}$ can be written (for each $k\in \mathbb{Z}$)  in the form  \cite[p.~165]{Z}
\begin{eqnarray}\label{lam}
\lambda=pA_{k-1}+qA_k =A_k(pw_k+q) 
\end{eqnarray}
In fact (see \cite{Z}),  ,  there is  a   one to one correspondence
$$\{\lambda\in \underline{b} : \lambda\gg 0\} \longleftrightarrow \{(k, p, q) : k, p, q\in \mathbb{Z}, 
p\geq 1, q\geq 0\}.$$
Now define a linear map $\mathcal{L}$, using the above correspondence, as 
\begin{eqnarray*}
\mathcal{L}(\lambda)=(p, q) \in \mathbb{Z}_{\geq 0}^2.
\end{eqnarray*}
Furthermore,  let the action of the fundamental unit 
$\epsilon$ of $K  (\epsilon > 1, N(\epsilon)=1)$ 
with respect to the basis  $\{w_k, 1\}$ be  given by some matrix 
$M=\sm a&b\\c&d\esm \in  \mathrm{SL}(2, \mathbb{Z}):$
\begin{eqnarray*}
&& \epsilon w_k=a w_k+b  \nonumber\\
&& \,\epsilon =cw_k+d.
\end{eqnarray*}
Under this action, we get 
\begin{eqnarray*}
 \mathcal{L}(\lambda)=(p, q) \rightarrow
\mathcal{L}(\epsilon \lambda)=(p, q)\sm a&b\\ c& d \esm.
\end{eqnarray*}
Since $N(\lambda)=N(\epsilon \lambda),$
we see that
\begin{eqnarray*}
&& \displaystyle \frac{e^{2\pi i   < \mathcal{L}(\lambda),(\alpha, \beta)   > }}{N(\lambda)}
=
\frac{e^{2\pi i   <\mathcal{L}(\epsilon\lambda), (\alpha, \beta) > }}{N(\epsilon\lambda)} 
  \\
 \mbox{ iff \, \,\,} && 
   \sm a-1&b\\c & d-1 \esm  \sm \alpha\\ \beta \esm \in \mathbb{Z}^2 \,\,\,
  \mbox{iff \,\,\,} \sm \alpha\\ \beta\esm \in \mathcal{S},
 \end{eqnarray*}
where the set $\mathcal{S}$ is given by
\begin{eqnarray}\label{set A}   
 \mathcal{S}:=\{(\alpha,\beta)\in\mathbb{Q}^2:N(\epsilon-1)(\alpha,\beta)\in\mathbb{Z}^2 \} 
\end{eqnarray}
and  $<,>$ denotes the usual inner product.
The condition $(\alpha,\beta)\in\mathcal{S}$ follows from the fact that $det\sm a-1& b\\ c& d-1\esm=2-(a+d)=N(\epsilon-1).$
\medskip

\begin{example}
Let $K=\mathbb{Q}(\sqrt{3}). $   Note   that    $U_+=<\epsilon=2 +\sqrt{3}>$   is    the group of positive units. 
Take $w_1=\frac{3+\sqrt{3}}{2}, w_2=\frac{3+\sqrt{3}}{3}$ so that $A_0=1, A_1=\frac{3-\sqrt{3}}{3},
A_2=2-\sqrt{3}.$ There is a one to one correspondence between  
\begin{eqnarray*}
 \{\lambda_2=p_2A_1+q_2A_2, \lambda_1=p_1A_0+q_1A_1\} \longleftrightarrow \{(2, p_2, q_2), (1, p_1, q_1)\}. 
\end{eqnarray*}
   Since    $N(\lambda_1)=N(\epsilon \lambda_1),  $   we    take $(\alpha, \beta)$ such that
$ N(\epsilon -1)(\alpha, \beta) =-2(\alpha, \beta)\in \mathbb{Z}^2 . $ 
 \end{example}

Now let us define the following zeta function associated with real quadratic fields:
 \begin{definition}   Let $U_+$ be a group of totally positive units.  
For $(\alpha,\beta)\in\mathcal{S}$, define 
\begin{eqnarray}\label{secondzeta}
\zeta(s, (\alpha, \beta), \mathscr{B})=N(\underline{b})^s
\sum_{\substack{\lambda \in \underline{b} /U_+\\ \lambda\gg0}}\frac{e^{2\pi i
<\mathcal{L}(\lambda), (\alpha, \beta)>}}{|N(\lambda)|^s}.
 \end{eqnarray}
\end{definition}

Observe that the zeta function $\zeta(s, (\alpha, \beta), \mathscr{B})$ in (\ref{secondzeta}) is well defined from the choice of $(\alpha, \beta) \in \mathcal{S}.$

For each  $\lambda=A_k(pw_k+q)$ in (\ref{lam}), let
\begin{align*}
 N(\lambda)=(w_0-w_0')Q_k(p,q), \, \, 
\end{align*} 
and 
\begin{align}
Q_k(x,y)=\frac{1}{w_k-w_k'}(y+xw_k)(y+xw_k').\nonumber
\end{align}
Note that $Q_j(x,y)$ is an indefinite binary quadratic form   with   positive real coefficients and discriminant $1$ with the property that $0<w_j'<w_j$ are roots of the quadratic equation $Q_j(1,-x)=0$. With this notation, there is    a   one to one correspondence \cite{Z}
\begin{align}\label{corres}
\mathrm{Red}(\mathscr{B})=\{w_1,w_2,\cdots,w_r\} \longleftrightarrow \{Q_1,Q_2,\cdots,Q_r\}.
\end{align}
Now the zeta function in (\ref{secondzeta}) becomes \cite[p.~166]{Z}
\begin{eqnarray}\label{qdr}
  D^{\frac{s}{2}}\zeta(s, (\alpha, \beta), \mathscr{B})
=
\sum_{j=1}^r\sum_{p\geq 1,q\geq0}
\frac{e^{2\pi i (p \alpha+q\beta)}}{Q_j(p,q)^s}.
\end{eqnarray}


Before stating our main results, it is essential to provide the definition of an important function, which we refer to as the \emph{higher Herglotz-Zagier-Novikov function}. This function plays a key role in our subsequent discussions.
\begin{definition}
Let  $\alpha, \beta \in \mathbb{R}$ and $\beta \notin \mathbb{Z}.$ Then,
for a positive integer $k\geq1, $ we define the \emph{higher Herglotz-Zagier-Novikov function}
$\mathscr{F}_{k}(x;  \alpha,\beta )$ by
\begin{eqnarray}\label{int repres for hh}
\mathscr{F}_{k}(x;  \alpha,\beta ):=\int_0^\infty \frac{ {\mathrm{Li}}_{k-1}\left(e^{-xt}e^{2\pi i\alpha}\right)}{1-e^{-t}e^{2\pi i\beta}}dt,  \qquad  \mathrm{Re}(x) >0,
\end{eqnarray}\\
where $\displaystyle\mathrm{Li}_{s}(z):=\sum_{n\geq 1}\frac{z^n}{n^s},\ (s,z\in\mathbb{C}, |z|<1)$ is the polylogarithm function \cite{lewin}, \cite[p.~611, Equation (25.12.10)]{nist}.
\end{definition}
In Section \ref{our function}, we derive various properties of $\mathscr{F}_{k}(x; \alpha, \beta)$, including its analytic continuation, as well as   two-, three-,   and six-term functional equations, among other properties.


%

\subsection{The second Kronecker limit formula of Zagier's zeta function}

Zagier \cite{Z} derived the first Kronecker limit formula of the zeta function associated   with     certain indefinite forms. More precisely, let us define the zeta function \cite[p.~166]{Z} (also see \cite{dit}), referred to here as \emph{Zagier's zeta function},
\begin{align}\label{zagier zeta function}
Z_Q(s):=\sum_{p\geq1,q\geq0}\frac{1}{Q(p,q)^s}\qquad (\mathrm{Re}(s)>1),
\end{align}
where $Q(p,q)$ is an indefinite binary quadratic form with positive coefficients and  normalized to have discriminant $1$. So the roots $w'<w$ of $Q(1,-x)=0$ are positive. Then Zagier \cite[p.~167, Theorem]{Z} found the following result:
\begin{theorem}[The first Kronecker limit formula of Zagier's zeta function]\label{zagier first limit formula} The function $Z_Q(s)$ has an analytic continuation to the half-plane $\mathrm{Re}(s)>\frac{1}{2}$ with a simple pole at $s=1$ and 
\begin{align*}
\lim_{s\to1}\left(Z_Q(s)-\frac{\frac{1}{2}\log(w/w')}{s-1}\right)=P(w,w'),
\end{align*}
where $P(x, y)$ is a universal function of two variables:
\begin{align}
P(x,y)=F(x)-F(y)+\mathrm{Li}_2\left(\frac{y}{x}\right)-\frac{\pi^2}{6}+\log\left(\frac{x}{y}\right)\left(\gamma-\frac{1}{2}\log(x-y)+\frac{1}{4}\log\left(\frac{x}{y}\right)\right),\nonumber
\end{align}
 and   where    the function $F(x)$ is the Herglotz function defined in \eqref{her int}.

\end{theorem}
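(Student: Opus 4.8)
The plan is to exploit the factorization of the normalized form. Since the roots of $Q(1,-x)=0$ are $w'<w$ and the discriminant is $1$, one is forced to have
\begin{equation*}
Q(p,q)=\frac{(q+pw)(q+pw')}{w-w'},
\end{equation*}
so that $Z_Q(s)=(w-w')^s\sum_{p\ge1}g_p(s)$ with $g_p(s)=\sum_{q\ge0}\bigl[(q+pw)(q+pw')\bigr]^{-s}$. The inner sum converges for $\mathrm{Re}(s)>1/2$, while the simple pole at $s=1$ is produced only by the outer sum over $p$; the whole difficulty is to peel off this pole and read off the constant term cleanly.

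First I would isolate the pole by comparing $g_p(s)$ with the integral $I_p(s)=\int_0^\infty[(x+pw)(x+pw')]^{-s}\,dx$. The scaling $x=py$ gives $I_p(s)=p^{1-2s}J(s)$ with $J(s)=\int_0^\infty[(y+w)(y+w')]^{-s}\,dy$ holomorphic for $\mathrm{Re}(s)>1/2$, whence $\sum_{p\ge1}I_p(s)=J(s)\,\zeta(2s-1)$. A standard Euler--Maclaurin estimate shows $g_p(s)-I_p(s)=O(p^{-2\mathrm{Re}(s)})$ uniformly on a neighbourhood of $s=1$ (the integrand is monotone in $x$, so its total variation is $O(p^{-2})$), whence $\sum_{p\ge1}\bigl(g_p(s)-I_p(s)\bigr)$ is holomorphic for $\mathrm{Re}(s)>1/2$. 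This simultaneously yields the meromorphic continuation and shows that the only pole of $Z_Q$ in the half-plane is the one coming from $\zeta(2s-1)$ at $s=1$; multiplying by $(w-w')^s$ and using $J(1)=\log(w/w')/(w-w')$ gives residue $\tfrac12\log(w/w')$, as claimed.

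Next I would evaluate the two surviving pieces at $s=1$. Uniform convergence lets me pass to the limit termwise in the remainder; a telescoping/digamma identity gives $g_p(1)=\bigl(\psi(pw)-\psi(pw')\bigr)/(p(w-w'))$ and $I_p(1)=\log(w/w')/(p(w-w'))$, so that
\begin{equation*}
(w-w')\sum_{p\ge1}\bigl(g_p(1)-I_p(1)\bigr)=\sum_{p\ge1}\frac{\bigl(\psi(pw)-\log pw\bigr)-\bigl(\psi(pw')-\log pw'\bigr)}{p}=F(w)-F(w'),
\end{equation*}
after first re-expressing the Herglotz function through its series form $F(x)=\sum_{n\ge1}\bigl(\psi(nx)-\log(nx)\bigr)/n$, which follows from \eqref{her int}. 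For the polar piece I would expand $(w-w')^s J(s)\,\zeta(2s-1)$ in a Laurent series at $s=1$ using $\zeta(2s-1)=\tfrac1{2(s-1)}+\gamma+\cdots$, $(w-w')^s=(w-w')\bigl(1+(s-1)\log(w-w')+\cdots\bigr)$ and $J(s)=J(1)+(s-1)J'(1)+\cdots$; subtracting the pole leaves the constant term $\gamma\log(w/w')+\tfrac12\log(w-w')\log(w/w')+\tfrac12(w-w')J'(1)$.

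The main obstacle is the evaluation of $J'(1)=-\int_0^\infty\log[(y+w)(y+w')]\,[(y+w)(y+w')]^{-1}\,dy$. I would split the integrand by partial fractions and reduce the convergent combination to dilogarithms via the primitives of $\log(1\pm u)/u$, arriving after simplification at a combination of $\mathrm{Li}_2\bigl((w-w')/w\bigr)$, $\mathrm{Li}_2\bigl(-(w-w')/w'\bigr)$ and squares of logarithms; the reflection identity $\mathrm{Li}_2(z)+\mathrm{Li}_2(1-z)=\tfrac{\pi^2}{6}-\log z\log(1-z)$ together with Landen's relation $\mathrm{Li}_2\bigl(-z/(1-z)\bigr)=-\mathrm{Li}_2(z)-\tfrac12\log^2(1-z)$ then converts everything into $\mathrm{Li}_2(w'/w)$. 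Assembling $\tfrac12(w-w')J'(1)$ with the other constant-term contributions, the spurious $\log(ww')$ and stray $\log^2$ terms cancel and the total collapses to $\mathrm{Li}_2(w'/w)-\tfrac{\pi^2}{6}+\log(w/w')\bigl(\gamma-\tfrac12\log(w-w')+\tfrac14\log(w/w')\bigr)$; adding $F(w)-F(w')$ reproduces $P(w,w')$ exactly. I expect the dilogarithm bookkeeping in $J'(1)$ to be the most error-prone step, with the uniform-convergence justification of the termwise limit being the main analytic point to make rigorous.
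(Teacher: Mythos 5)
The paper offers no proof of Theorem \ref{zagier first limit formula}: it is quoted, with attribution, from Zagier \cite{Z}, so there is no internal argument to measure your proposal against. Judged on its own terms, your proof is correct, and I checked the step you flagged as most error-prone. With $\delta=w-w'$ and $z=w'/w$, partial fractions and $\int_0^z\log(1\pm u)\,\frac{du}{u}=-\mathrm{Li}_2(\mp z)$ give
\begin{align*}
J'(1)=-\frac{1}{\delta}\left[\log^2 w-\log^2 w'+\mathrm{Li}_2\left(\frac{\delta}{w}\right)-\mathrm{Li}_2\left(-\frac{\delta}{w'}\right)\right],
\end{align*}
and Landen's identity $\mathrm{Li}_2\bigl(-(1-z)/z\bigr)=-\mathrm{Li}_2(1-z)-\tfrac12\log^2 z$ followed by Euler's reflection formula collapse this to
\begin{align*}
\tfrac{1}{2}\,\delta\,J'(1)=\mathrm{Li}_2\left(\frac{w'}{w}\right)-\frac{\pi^2}{6}-\log\left(\frac{w}{w'}\right)\log\delta+\frac14\log^2\left(\frac{w}{w'}\right);
\end{align*}
adding the contribution $\gamma\log(w/w')+\tfrac12\log\delta\,\log(w/w')$ from the Laurent expansion of $(w-w')^sJ(s)\zeta(2s-1)$ and $F(w)-F(w')$ from the holomorphic remainder reproduces $P(w,w')$ exactly, and the residue $\tfrac12(w-w')J(1)=\tfrac12\log(w/w')$ is right. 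Two small points of rigor: the Euler--Maclaurin bound must be stated uniformly on compact subsets of $\mathrm{Re}(s)>\tfrac12$, not merely near $s=1$, to conclude holomorphy of $\sum_{p}(g_p-I_p)$ on the whole half-plane; and for non-real $s$ the phrase ``the integrand is monotone'' should be replaced by the total-variation estimate $\int_0^\infty|f_p'(x)|\,dx\le\frac{|s|}{\mathrm{Re}(s)}(ww'p^2)^{-\mathrm{Re}(s)}$. Both repairs are immediate. Methodologically, your pole-extraction device --- comparing $g_p(s)$ with $I_p(s)=p^{1-2s}J(s)$ so that the singular part of $Z_Q$ is exhibited as $(w-w')^sJ(s)\zeta(2s-1)$ --- is the natural untwisted replacement for what this paper does in proving Theorem \ref{mz}, where analyticity on $\mathrm{Re}(s)>\tfrac12$ is obtained by Abel summation against $e^{2\pi i\beta q}$, a trick unavailable here precisely because the untwisted series genuinely has a pole; your inner-sum digamma evaluation at $s=1$ plus the dilogarithm integral is close in spirit to Zagier's original argument in \cite{Z}, whereas the twisted case in the paper needs no pole subtraction at all.
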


To study the second Kronecker limit formula of Zagier's zeta function, define the function
\begin{align}\label{epstein zeta}
Z_Q(s; (\alpha, \beta)):=\sum_{p\geq 1, q\geq 0}\frac{e^{2\pi i (\alpha p+ \beta q)}}{Q(p,q)^s}, \quad (\alpha,\beta\in\mathbb{R}\ \mathrm{and\ Re}(s)>1),
\end{align}
where $Q(x,y)$ is an indefinite binary quadratic form with positive real coefficients and discriminant $1$ with the property that $0<w'<w$ are roots of the quadratic equation $Q(1,-x)=0$.

The convergence of the series in \eqref{epstein zeta} in the region Re$(s)>1$ is evident from the absolute convergence of the series in \eqref{zagier zeta function}. 

With the above notations,  our next theorem presents the second Kronecker limit formula of the Zagier's zeta function.
\begin{theorem}\label{mz}
Let $\alpha,\beta\in\mathbb{R}\backslash \mathbb{Z}$.  Then,
\begin{enumerate}
\item  The zeta function  $Z_Q(s; (\alpha, \beta))$ in \eqref{epstein zeta} is an analytic function in the half-plane  $\mathrm{Re}(s)>1/2$.\\
\item The second Kronecker limit formula is given by
\begin{align}\label{at s=1}
Z_Q(1; (\alpha, \beta))=\mathscr{F}_{2} (w';\alpha, \beta)-\mathscr{F}_{2} (w; \alpha, \beta),
\end{align}
where the numbers $w'<w$ are  the roots  of the quadratic  equation $Q(1,-x)=0$ and $\mathscr{F}_{2} (x;\alpha, \beta)$ denotes the higher Herglotz-Zagier-Novikov function defined in \eqref{int repres for hh}.
\end{enumerate}
\end{theorem}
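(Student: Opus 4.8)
The plan is to prove the two assertions by the same device: insert Gamma-integral representations of the reciprocal powers appearing in the summands of \eqref{epstein zeta}, sum the resulting geometric series, and read off an integral of exactly the shape \eqref{int repres for hh}. I use throughout the normalization forced by the discriminant-$1$ hypothesis, namely $Q(p,q)=\tfrac{1}{w-w'}(q+pw)(q+pw')$, so that $q+pw$ and $q+pw'$ are positive for $p\ge1$, $q\ge0$. For part (1), starting in the absolutely convergent region $\mathrm{Re}(s)>1$, I would write each factor $(q+pw)^{-s}$ and $(q+pw')^{-s}$ as a Gamma-integral, interchange (Fubini), and sum the two geometric series in $p\ge1$ and $q\ge0$ to obtain
\begin{align*}
Z_Q(s;(\alpha,\beta))=\frac{(w-w')^s}{\Gamma(s)^2}\int_0^\infty\!\!\int_0^\infty\frac{u^{s-1}v^{s-1}\,e^{2\pi i\alpha}e^{-(wu+w'v)}}{\bigl(1-e^{2\pi i\alpha}e^{-(wu+w'v)}\bigr)\bigl(1-e^{2\pi i\beta}e^{-(u+v)}\bigr)}\,du\,dv.
\end{align*}

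Since $\alpha,\beta\notin\mathbb{Z}$, the two denominators stay bounded away from $0$ on all of $(0,\infty)^2$: the points $e^{2\pi i\alpha}e^{-(wu+w'v)}$ and $e^{2\pi i\beta}e^{-(u+v)}$ trace the open radii $\{re^{2\pi i\alpha}\}$ and $\{re^{2\pi i\beta}\}$, $0<r<1$, which avoid $1$, while the numerator is bounded by $e^{-(wu+w'v)}$. Hence the integrand is dominated by a constant multiple of $u^{\sigma-1}v^{\sigma-1}e^{-(wu+w'v)}$, whose double integral equals $\Gamma(\sigma)^2/(ww')^{\sigma}<\infty$ for $\sigma=\mathrm{Re}(s)>0$. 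As $1/\Gamma(s)^2$ is entire, the displayed representation is holomorphic on $\mathrm{Re}(s)>0$ and agrees with \eqref{epstein zeta} on $\mathrm{Re}(s)>1$; this furnishes the analytic continuation to $\mathrm{Re}(s)>\tfrac12$ (in fact to $\mathrm{Re}(s)>0$), proving (1).

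For part (2) I would evaluate the series at $s=1$ in the order ``$q$ first, then $p$'' that part (1) legitimizes, writing $Z_Q(1;(\alpha,\beta))=\sum_{p\ge1}e^{2\pi ip\alpha}S_p$ with $S_p:=(w-w')\sum_{q\ge0}(q+pw)^{-1}(q+pw')^{-1}e^{2\pi iq\beta}$, an absolutely convergent inner sum. Using $\tfrac{w-w'}{(q+pw)(q+pw')}=\tfrac1p\bigl(\tfrac{1}{q+pw'}-\tfrac{1}{q+pw}\bigr)=\tfrac1p\int_0^\infty\bigl(e^{-(q+pw')t}-e^{-(q+pw)t}\bigr)\,dt$, interchanging $\sum_q$ with $\int_0^\infty$ (Tonelli applies, the termwise-integrated absolute series behaving like $\sum_q q^{-2}$), and summing $\sum_{q\ge0}e^{2\pi iq\beta}e^{-qt}=(1-e^{2\pi i\beta}e^{-t})^{-1}$, I get $S_p=\tfrac1p\int_0^\infty\frac{e^{-pw't}-e^{-pwt}}{1-e^{2\pi i\beta}e^{-t}}\,dt$. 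Summing over $p$ and interchanging once more — now absolutely, since $|1-e^{2\pi i\beta}e^{-t}|\ge c>0$ makes $|S_p|=O(p^{-2})$ — and recognizing $\sum_{p\ge1}\tfrac{e^{2\pi ip\alpha}}{p}e^{-pxt}=\mathrm{Li}_1(e^{-xt}e^{2\pi i\alpha})$ identifies the result with $\mathscr{F}_2(w';\alpha,\beta)-\mathscr{F}_2(w;\alpha,\beta)$ through \eqref{int repres for hh} with $k=2$.

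The main obstacle — and the reason for summing $q$ before $p$ — is that \eqref{epstein zeta} at $s=1$ is only conditionally convergent: expanding $\mathrm{Li}_1$ and the geometric series simultaneously and integrating term by term would produce $\sum_{p,q}\frac{w-w'}{(q+pw)(q+pw')}$, whose inner $q$-sum is of size $\asymp 1/p$, so the iterated sum diverges and Tonelli is unavailable. Performing the (absolutely convergent) $q$-summation first collapses each $p$-block into a one-dimensional integral of size $O(p^{-2})$, after which the outer interchange is genuinely absolute. The hypotheses $\alpha,\beta\notin\mathbb{Z}$ enter decisively at exactly these points — $\beta\notin\mathbb{Z}$ to keep $1-e^{2\pi i\beta}e^{-t}$ away from $0$, and $\alpha,\beta\notin\mathbb{Z}$ to keep the part-(1) denominators away from $0$ near the origin — which is also the analytic source of the absence of a pole at $s=1$, in contrast with the untwisted Theorem \ref{zagier first limit formula}.
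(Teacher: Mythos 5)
Your proposal is correct in its essentials, but it takes a genuinely different route from the paper's, and one step needs tightening.

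For part (1), the paper never leaves the series: it applies Abel summation in $q$ (using that the partial sums of $e^{2\pi i\beta q}$ are bounded, via $c_q=e^{2\pi i\beta q}/(e^{2\pi i\beta}-1)$), estimates the rectangular partial sums by a constant times $\sum_p p^{-2\mathrm{Re}(s)}$, and concludes uniform convergence on compact subsets of $\mathrm{Re}(s)>1/2$, hence analyticity by Weierstrass' theorem. Your Gamma-integral/Fubini argument instead produces a closed-form double integral that continues $Z_Q$ holomorphically, in fact to the larger half-plane $\mathrm{Re}(s)>0$. The trade-off is real: the paper's method shows the series in \eqref{epstein zeta} itself converges (as a limit of rectangular partial sums) throughout $\mathrm{Re}(s)>1/2$ --- which is the sense in which the paper reads ``$Z_Q(1;(\alpha,\beta))$'' and what its remark after the theorem asserts --- whereas your method shows only that the function defined for $\mathrm{Re}(s)>1$ admits an analytic continuation, and says nothing about convergence of the series below $\mathrm{Re}(s)=1$. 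For part (2), the paper specializes Theorem \ref{main} at $k=1$: the operator identity $\mathscr{D}_0\bigl(1/(T-x)\bigr)=(x-y)/\bigl((T-x)(T-y)\bigr)$ is exactly your partial-fraction step, paired there with the series representation \eqref{hf} of $\mathscr{F}_2$ rather than with the integral representation \eqref{int repres for hh} as you do; a side benefit of your version is that you justify the sum--integral interchanges explicitly, which the paper leaves implicit.

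The point to tighten: under your proof of part (1), $Z_Q(1;(\alpha,\beta))$ is by definition the value of the analytic continuation at $s=1$, so you still owe the identification of that value with the iterated sum $\sum_{p\ge 1}e^{2\pi ip\alpha}S_p$; the phrase ``that part (1) legitimizes'' is not yet an argument, since part (1) as you proved it makes no statement about summation orders. The fix is short and uses only estimates you already have: set $s=1$ in your double integral and expand the two geometric series back out --- first the $p$-series, then, for each fixed $p$, the $q$-series --- each interchange being absolutely convergent because both denominators are bounded below (as $\alpha,\beta\notin\mathbb{Z}$) and the exponential factors dominate. This recovers precisely the iterated sum you then evaluate, closing the loop. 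Alternatively, proving part (1) the paper's way makes the iterated-sum reading of $Z_Q(1;(\alpha,\beta))$ immediate.
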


\begin{remark}
Observe that our result in \eqref{at s=1} does not involve any limit, similar to the classical second Kronecker limit formula in the imaginary quadratic field case  \eqref{2nd imaginary}. This is because, when $\alpha,\beta\in\mathbb{R}\backslash\mathbb{Z}$, the series defining $Z_Q(s; (\alpha, \beta))$ in \eqref{epstein zeta} is already analytic in the region $\mathrm{Re}(s)>1/2$ and has no poles. Whereas, when $\alpha$ and $\beta$ are integers, analytic continuation is required and the zeta function $Z_Q(s; (\alpha, \beta))=Z_Q(s)$ has a simple pole at $s=1$.
\end{remark}
 
Our next result treats the zeta function $Z_Q(s; (\alpha, \beta))$ for higher natural numbers.

\begin{theorem}\label{main}
Let\footnote{Note that this result remains valid for all $(\alpha,\beta)\in \mathbb{R}$, provided one takes an appropriate limit on the right-hand side of \eqref{general limit formula} when either $\alpha$ or $\beta$, or both, are integers (see Proposition \ref{special cases} below). Moreover, if $\alpha$ and $\beta$ are non-integral real numbers, then \eqref{general limit formula} is valid for every integer $k \geq 1$.} $\alpha,\beta\in \mathbb{R}$ and let
 $\mathscr{F}_{k}(x;  \alpha,\beta )$  denote the higher Herglotz-Zagier-Novikov function as defined in \eqref{int repres for hh}. Then,   for    any positive integer $k>1$, we have
\begin{align}\label{general limit formula}
Z_Q(k;(\alpha,\beta)) =-(\mathscr{D}_{k-1}\mathscr{F}_{2k})(w,w'; \alpha, \beta),
\end{align}
with the differential operator $\mathscr{D}_{n}$ defined as \cite[p.~33, Definition 6]{VZ}\textup{:}
\begin{align}\label{doperator}
(\mathscr{D}_{n}f)(x,y; \alpha, \beta)=\sum_{i=0}^n\binom{2n-i}{n}\frac{f^{(i)}(x;\alpha, \beta)-(-1)^if^{(i)}(y; \alpha, \beta)}{i!(y-x)^{n-i}},
\end{align}
for $ n\in\mathbb{N}\cup\{0\}$ and any function $f$.
 \end{theorem}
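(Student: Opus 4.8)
The plan is to convert both sides into Dirichlet-type series and reduce the theorem to a single algebraic partial-fraction identity, whose coefficients turn out to be exactly the binomial weights built into $\mathscr{D}_{k-1}$. First I would record the series representation of the higher Herglotz-Zagier-Novikov function: expanding $\mathrm{Li}_{k-1}(e^{-xt}e^{2\pi i\alpha})=\sum_{p\geq1}p^{-(k-1)}e^{2\pi i p\alpha}e^{-pxt}$ and $\left(1-e^{-t}e^{2\pi i\beta}\right)^{-1}=\sum_{q\geq0}e^{2\pi i q\beta}e^{-qt}$ in \eqref{int repres for hh}, and integrating $\int_0^\infty e^{-(px+q)t}\,dt=(px+q)^{-1}$ term by term, gives
\begin{equation*}
\mathscr{F}_{k}(x;\alpha,\beta)=\sum_{p\geq1,\,q\geq0}\frac{e^{2\pi i(p\alpha+q\beta)}}{p^{k-1}(q+px)},
\end{equation*}
the interchange being justified for $\mathrm{Re}(x)>0$ and $\beta\notin\mathbb{Z}$ by the oscillation of the $q$-sum, as developed in Section \ref{our function}. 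Differentiating $i$ times in $x$ then yields
\begin{equation*}
\frac{\mathscr{F}_{2k}^{(i)}(x;\alpha,\beta)}{i!}=(-1)^i\sum_{p\geq1,\,q\geq0}\frac{e^{2\pi i(p\alpha+q\beta)}}{p^{2k-1-i}(q+px)^{i+1}},
\end{equation*}
and for $k>1$ and $0\leq i\leq k-1$ the exponent $2k-1-i\geq k\geq2$ secures convergence.

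Next I would substitute $x=w$ and $x=w'$ into $\mathscr{D}_{k-1}$ from \eqref{doperator} and interchange the finite $i$-sum with the $(p,q)$-sum. Writing $u=q+pw$, $v=q+pw'$, so that $u-v=p(w-w')$ and $p^{-(2k-1-i)}=\left((w-w')/(u-v)\right)^{2k-1-i}$, the power of $(w-w')$ collects to $(w-w')^k$ in every term, and each $(p,q)$-block becomes
\begin{equation*}
(w-w')^k\sum_{i=0}^{k-1}\binom{2k-2-i}{k-1}(-1)^{k-1-i}\frac{1}{(u-v)^{2k-1-i}}\left[\frac{(-1)^i}{u^{i+1}}-\frac{1}{v^{i+1}}\right].
\end{equation*}
Since $Q(p,q)=uv/(w-w')$ gives $Z_Q(k;(\alpha,\beta))=(w-w')^k\sum_{p,q}e^{2\pi i(p\alpha+q\beta)}(uv)^{-k}$, the whole theorem reduces to the single identity
\begin{equation*}
\sum_{i=0}^{k-1}\binom{2k-2-i}{k-1}(-1)^{k-1-i}\frac{1}{(u-v)^{2k-1-i}}\left[\frac{(-1)^i}{u^{i+1}}-\frac{1}{v^{i+1}}\right]=-\frac{1}{u^kv^k}.
\end{equation*}

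Finally I would prove this as the partial-fraction expansion of $(u^kv^k)^{-1}$. Reindexing by $j=i+1$ and using $(-1)^{j-1}u^{-j}-v^{-j}=-\left((-1)^ju^{-j}+v^{-j}\right)$, the claim becomes
\begin{equation*}
\frac{1}{u^kv^k}=\sum_{j=1}^{k}\binom{2k-1-j}{k-1}(-1)^{k-j}\frac{1}{(u-v)^{2k-j}}\left[\frac{(-1)^j}{u^j}+\frac{1}{v^j}\right].
\end{equation*}
Expanding $1/\left(u^k(u-c)^k\right)$ with $c=u-v$, the standard residue formula gives the principal part at $v=0$ as $B_j=\frac{(-1)^{k-j}}{(k-j)!}\frac{(2k-j-1)!}{(k-1)!}(u-v)^{-(2k-j)}$, and the principal part at $u=0$ equals $(-1)^jB_j$ by the symmetry $u\leftrightarrow v$ (which replaces $c$ by $-c$); this is what produces the alternating bracket. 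Since $\binom{2k-1-j}{k-1}=\frac{(2k-j-1)!}{(k-j)!(k-1)!}$, the coefficients match exactly, and reassembling the series gives $\mathscr{D}_{k-1}\mathscr{F}_{2k}(w,w';\alpha,\beta)=-Z_Q(k;(\alpha,\beta))$.

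The difficulty is bookkeeping rather than conceptual: one must carefully track the signs and the powers of $(w-w')$ through the collapse, and verify that the weights $\binom{2n-i}{n}$ designed into $\mathscr{D}_n$ coincide with the partial-fraction residues. The only analytic subtlety is the boundary index $i=0$, where the $q$-series converges only conditionally; this is exactly where the hypothesis $\beta\notin\mathbb{Z}$ is used, and where the correction terms of Proposition \ref{special cases} enter for integral $\alpha$ or $\beta$, as noted in the footnote to the statement.
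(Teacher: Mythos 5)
Your proof is correct, and structurally it is the same as the paper's: both arguments start from the expansion $\mathscr{F}_{2k}(x;\alpha,\beta)=\sum_{p\geq1,q\geq0}e^{2\pi i(p\alpha+q\beta)}/\bigl(p^{2k-1}(px+q)\bigr)$ (the paper's \eqref{hf}), apply $\mathscr{D}_{k-1}$ term by term, and reduce \eqref{general limit formula} to the single rational identity $\mathscr{D}_{k-1}\bigl(p^{-(2k-1)}(px+q)^{-1}\bigr)=-\bigl((x-y)/((px+q)(py+q))\bigr)^{k}$, which is the paper's \eqref{derivative of}; summing this against the exponential weights and using $Q(p,q)=(pw+q)(pw'+q)/(w-w')$ then gives $-Z_Q(k;(\alpha,\beta))$. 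The one genuine difference is how that identity is obtained. The paper simply quotes Vlasenko--Zagier \cite[pp.~33-34, Lemma 7(iv)]{VZ}, namely $\mathscr{D}_{n}\bigl((T-x)^{-1}\bigr)=\bigl((x-y)/((T-x)(T-y))\bigr)^{n+1}$, applied with $T=-q/p$, whereas you reprove it from scratch: in the variables $u=pw+q$, $v=pw'+q$ your partial-fraction identity is exactly this lemma for $n=k-1$, and your residue computation of the principal parts, $B_j=(-1)^{k-j}\binom{2k-1-j}{k-1}(u-v)^{-(2k-j)}$ at $v=0$ and $A_j=(-1)^jB_j$ at $u=0$ via the symmetry $u\leftrightarrow v$, is a correct, self-contained proof of it (the coefficient match $\binom{2k-1-j}{k-1}=\frac{(2k-j-1)!}{(k-j)!(k-1)!}$ and the sign bookkeeping through the powers of $(w-w')$ all check out). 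What your route buys is independence from \cite{VZ}, at the cost of the index-shifting and sign tracking you describe; what the paper's route buys is a three-line proof. Your analytic remarks also align with the paper's treatment: absolute convergence of the differentiated series since $2k-1-i\geq k\geq 2$ for $i\leq k-1$, conditional convergence only at the boundary index $i=0$ (where $\beta\notin\mathbb{Z}$ is used, exactly as in the paper's restriction for $k=1$), and the integral cases of $\alpha,\beta$ handled by limits through Proposition \ref{special cases}.
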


For any  $(\alpha,\beta)\in\mathbb{R}^2$, let us define the function
\begin{align}\label{general zeta}
\mathcal{Z}(s,(\alpha,\beta),\mathscr{B}):=\sum_{j=1}^rZ_{Q_j}(s;(\alpha,\beta)),  \quad (\mathrm{Re}(s)>1),
\end{align}
where $\{Q_j|\ 1\leq j\leq r\}$ are from the correspondence  of Red$(\mathscr{B})$ in \eqref{corres}.

Using Theorem \ref{main}, we can find that
\begin{align}\label{table eqn}
\mathcal{Z}(k,(\alpha,\beta),\mathscr{B})=-\sum_{w\in\mathrm{Red}(\mathscr{B})}(\mathscr{D}_{k-1}\mathscr{F}_{2k})(w,w'; \alpha, \beta).
\end{align}
 
In the next subsection, we show that the properties of the function $\mathcal{Z}(s,(\alpha,\beta),\mathscr{B})$ yield corresponding formulas for the zeta function over real quadratic fields.

 \subsection{The second Kronecker limit formula for the zeta function over real quadratic fields}
 
Note that when $(\alpha,\beta)$ is restricted to the set $\mathcal{S}$ in \eqref{set A}, the function $\mathcal{Z}(s, (\alpha, \beta ),\mathscr{B})$, defined in \eqref{general zeta}, essentially becomes the zeta function over real quadratic fields introduced in \eqref{qdr}:
\begin{align}\label{connection b/w real and our}
\zeta(s, (\alpha, \beta ),\mathscr{B})=D^{\frac{s}{2}}\mathcal{Z}(s,(\alpha,\beta),\mathscr{B}).
\end{align}

The following result establishes the second Kronecker limit formula and its higher analogue of the zeta function defined over   a    real quadratic field.
 \begin{theorem}\label{real second kronecker limit formula}
Let the set  $\mathrm{Red}(\mathscr{B})$ and  the set $\mathcal{S}$ be defined in \eqref{redB} and \eqref{set A} respectively. Let $\mathscr{F}_{k}(x;  \alpha,\beta )$  be the higher Herglotz-Zagier-Novikov function defined in \eqref{int repres for hh}.
\begin{enumerate}
\item Let $(\alpha,\beta)\in \mathcal{S}\backslash\mathbb{Z}^2$. The zeta function $\zeta(s, (\alpha, \beta ),\mathscr{B})$  is analytic in the region $\mathrm{Re}(s)>1/2$.

 \item Let $(\alpha,\beta)\in \mathcal{S}$. For  any positive integer $k>1,$
\begin{align}\label{limit formula}
 D^{\frac{k}{2}}\zeta( k, (\alpha, \beta ),\mathscr{B})&=-  \sum_{w\in \mathrm{Red}(\mathscr{B})}P_k(w, w'; \alpha, \beta),
\end{align} 
where
  $$P_k(x,y;\alpha, \beta)= (\mathscr{D}_{k-1}\mathscr{F}_{2k})(x,y; \alpha, \beta). $$
  \item   In particular, for $(\alpha,\beta)\in \mathcal{S}\backslash\mathbb{Z}^2$, the expression $$D^{ \frac{1}{2}}\zeta( 1, (\alpha, \beta ),\mathscr{B})= \sum_{w\in \mathrm{Red}(\mathscr{B})}\bigl(\mathscr{F}_{2} (w';\alpha, \beta)-\mathscr{F}_{2} (w; \alpha, \beta)\bigr) $$ can be regarded as the {\textit{second Kronecker limit formula}} of a real quadratic field.  It serves as an analogue of the corresponding formula for imaginary quadratic fields, which is given in \eqref{2nd imaginary}. 
  \end{enumerate}
\end{theorem}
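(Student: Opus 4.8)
The plan is to deduce all three assertions from the results already established for a single Zagier zeta function, namely Theorem \ref{mz} and Theorem \ref{main}, and then to reassemble them over the narrow class $\mathscr{B}$ via the bijection \eqref{corres}. The key bridge is \eqref{qdr}, which, in view of the definition \eqref{general zeta}, identifies
$$D^{\frac{s}{2}}\zeta(s,(\alpha,\beta),\mathscr{B})=\sum_{j=1}^{r}Z_{Q_j}(s;(\alpha,\beta))=\mathcal{Z}(s,(\alpha,\beta),\mathscr{B}),$$
so that, after dividing by the entire and nowhere-vanishing factor $D^{s/2}$, every analytic property of the finite sum on the right transfers verbatim to $\zeta(s,(\alpha,\beta),\mathscr{B})$.

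For part (1) I would note that, by Theorem \ref{mz}(1), each summand $Z_{Q_j}(s;(\alpha,\beta))$ continues analytically to the half-plane $\mathrm{Re}(s)>\frac{1}{2}$; a finite sum of such functions is analytic there, and dividing by the entire factor $D^{s/2}$ preserves this, giving the analyticity of $\zeta(s,(\alpha,\beta),\mathscr{B})$. For part (2), I would evaluate the bridge identity at $s=k$ with $k>1$ and apply Theorem \ref{main} termwise, writing $Z_{Q_j}(k;(\alpha,\beta))=-(\mathscr{D}_{k-1}\mathscr{F}_{2k})(w_j,w_j';\alpha,\beta)$, where $w_j'<w_j$ are the roots of $Q_j(1,-x)=0$. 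By \eqref{corres} the index $j$ runs precisely over $\mathrm{Red}(\mathscr{B})$, so summing yields $D^{k/2}\zeta(k,(\alpha,\beta),\mathscr{B})=-\sum_{w\in\mathrm{Red}(\mathscr{B})}P_k(w,w';\alpha,\beta)$, which is \eqref{limit formula}. Part (3) is the value at $s=1$; since Theorem \ref{main} is stated only for $k>1$, here I would instead invoke Theorem \ref{mz}(2), which gives $Z_{Q_j}(1;(\alpha,\beta))=\mathscr{F}_2(w_j';\alpha,\beta)-\mathscr{F}_2(w_j;\alpha,\beta)$, and sum over the same index set to recover the asserted second Kronecker limit formula.

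The only delicate point, and the step I expect to be the main obstacle, is the bookkeeping of integrality hypotheses. Theorem \ref{main} holds for all $(\alpha,\beta)\in\mathbb{R}^2$ (with the indicated limits), so part (2) goes through for every $(\alpha,\beta)\in\mathcal{S}$. Parts (1) and (3), however, quote Theorem \ref{mz}, which is stated under $\alpha,\beta\in\mathbb{R}\setminus\mathbb{Z}$, whereas the present hypothesis is the weaker $(\alpha,\beta)\in\mathcal{S}\setminus\mathbb{Z}^2$, permitting exactly one of $\alpha,\beta$ to be an integer. In that borderline case one cannot cite Theorem \ref{mz} as a black box: instead one must re-run its proof, checking that the oscillation carried by the single non-integral variable already forces $Z_{Q_j}(s;(\alpha,\beta))$ to be analytic for $\mathrm{Re}(s)>\frac{1}{2}$ and that its value at $s=1$ is still the difference $\mathscr{F}_2(w';\alpha,\beta)-\mathscr{F}_2(w;\alpha,\beta)$. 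This is consistent because the defining integral \eqref{int repres for hh} requires only $\beta\notin\mathbb{Z}$, so $\mathscr{F}_2(x;\alpha,\beta)$ remains well defined precisely when $\beta$ is non-integral; the symmetric subcase $\alpha\notin\mathbb{Z}$, $\beta\in\mathbb{Z}$ would be treated by the analogous argument after tracking which variable supplies the cancellation.
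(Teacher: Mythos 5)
Your proposal is correct and takes essentially the same route as the paper: the paper's (one-line) proof likewise reduces everything to the bridge identity \eqref{connection b/w real and our} and applies Theorems \ref{mz} and \ref{main} termwise over the correspondence \eqref{corres}. The integrality subtlety you flag --- that $\mathcal{S}\setminus\mathbb{Z}^2$ permits exactly one of $\alpha,\beta$ to be an integer, which falls outside the stated hypothesis $\alpha,\beta\in\mathbb{R}\setminus\mathbb{Z}$ of Theorem \ref{mz} --- is genuine and is silently glossed over by the paper, so your explicit note that this borderline case needs the argument re-run (or a limiting interpretation in the spirit of Proposition \ref{special cases}) is, if anything, more careful than the original.
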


\begin{remark}
\begin{enumerate}
\item Note that the result in \eqref{limit formula} above remains valid for all $(\alpha,\beta)\in \mathcal{S}$, provided one takes an appropriate limit on the right-hand side of \eqref{limit formula} when either $\alpha$ or $\beta$, or both, are integers \textup{(}see Proposition \ref{special cases} below\textup{)}.
\item As a special case of the above theorem, one can get the higher Kronecker limit formula of Vlasenko and Zagier \cite[p.~25, Theorem 2]{VZ}.
\end{enumerate}
\end{remark}

We end this section with a note that the values of $(\mathscr{D}_{n}f)(x,y; \alpha, \beta)$ are solely determined by the $(2n+1)$st derivative of the function $f(x; \alpha, \beta)$ by means of the relation \cite[pp.~33-34, Proposition 7(v)]{VZ}, specifically, if $f$ is any continuously differentiable $2n+1$ times between $x$ and $y$ then
\begin{align*}
(\mathscr{D}_{n}f)(x,y;\alpha,\beta)=\frac{1}{(n!)^2}\int_y^x\left(\frac{(x-t)(t-y)}{x-y}\right)^nf^{(2n+1)}(t; \alpha, \beta)dt.
\end{align*}

\section{Higher Herglotz-Zagier-Novikov function}\label{our function}

Note that $\mathscr{F}_{k}(x;\alpha,\beta)$  in \eqref{int repres for hh} can be written as   
\begin{eqnarray}\label{higher herglotz}
\mathscr{F}_{k}(x;\alpha,\beta )=\sum_{p=1}^\infty\frac{e^{2\pi ip\alpha}\psi_\beta(px)}{p^{k-1}},
\end{eqnarray}
 with 
\begin{eqnarray}\label{poly}
\psi_{\beta}(x):=
 \int_0^\infty\frac{e^{-xt}}{1-e^{-t}e^{2\pi i \beta}}dt.  \qquad  
\end{eqnarray}
 Since the function $\psi_{\beta}(x)$ for  $\mathrm{Re}(x)>0$ in \eqref{poly} can be expressed as 
\begin{eqnarray} 
\psi_{\beta}(x)=\sum_{q\geq 0}\frac{e^{2\pi i \beta q}}{ x+q },\nonumber
\end{eqnarray}
equation \eqref{higher herglotz} implies that 
\begin{eqnarray}\label{hf}
\mathscr{F}_{k}(x; \alpha,\beta)
=\sum_{p\geq 1,q\geq0 }\frac{e^{2\pi i (\alpha p+ \beta q)}}{p^{k-1}(px+q)}.
\end{eqnarray}
 The above representation provides the analytic continuation of $\mathscr{F}_{k}(x; \alpha,\beta)$ to the region $x\in\mathbb{C}':=\mathbb{C}\backslash(-\infty,0]$. This is established in Proposition \ref{anly} below.

Before proceeding further, we first make some remarks.
\begin{remark}\label{r}
\begin{enumerate}
\item The inclusion of the term ``higher" in the name of the function $\mathscr{F}_k(x;\alpha,\beta)$ stems from the fact that it encompasses the previously studied case with $k=2$. In our previous work \cite{CK}, we investigated the special case $\mathscr{F}_2(x;\alpha,\beta)$ and labelled it as the ``Herglotz-Zagier-Novikov function". By extending our analysis to the more general form $\mathscr{F}_k(x;\alpha,\beta)$, we are able to explore a broader range of scenarios and derive the second Kronecker and higher ``limit" formulas.
\item  Observe that $\psi_\beta(x)$ is nothing but a special case of the Lerch zeta function $\phi(z,x, s)$, namely,
$ \phi\left( \beta,x , 1 \right)= \psi_\beta(x).$
  The Lerch zeta function is defined as \cite{lerch}
\begin{align}\label{Lerch}
 \qquad\qquad\phi\left(z, x, s\right)   :=\sum_{q=0}^\infty\frac{e^{2\pi iqz}}{(q+x)^s}, \quad \mathrm{Re}(s)>0,\ \mathrm{if}\ z\in\mathbb{R}\backslash\mathbb{Z}, \quad x  \in \mathbb{C}\backslash \mathbb{Z}_{\leq 0}. 
\end{align} 


\item The following function appeared in the work of Novikov \cite{novikov} on the first Kronecker limit formula:
\begin{eqnarray}\label{nov}
\rho(x, \alpha, \beta) =\int_0^{1}\frac{\ln{(1-t^x e^{2\pi i \alpha})}}{e^{-2\pi i \beta}-t}  dt, \quad  \mathrm{Re}(x)>0,
\end{eqnarray} 
where $\alpha\in\mathbb{R},\beta\in\mathbb{R}\backslash\mathbb{Z}$. Note that when $k=2$, $\mathscr{F}_{k}(x;\alpha,\beta )$ reduces to the function $\rho( x, \alpha,\beta )$:
\begin{align}\label{above exp}
\mathscr{F}_{2}(x;\alpha,\beta )= -\rho( x, \alpha,\beta )-\frac{1}{x}\mathrm{Li}_2\left(e^{2\pi i\alpha}\right).
\end{align}
In our previous paper \cite{CK}, we extensively studied the properties of $\mathscr{F}_{2}(x;\alpha,\beta )$ or $\rho( x, \alpha,\beta )$.

\item From \eqref{nov} and \eqref{above exp}, it is straightforward to see that
\begin{align}
\quad\qquad\mathscr{F}_{2}\left(x; \frac{1}{2},\frac{1}{2} \right)=\int_0^1\frac{\log\left(1+t^x\right)}{1+t}dt-\frac{1}{x}\mathrm{Li}_2(-1)= J(x)+\frac{\pi^2}{12x}. \nonumber
\end{align} 
The function $J(x):=\int_0^1\frac{\log\left(1+t^x\right)}{1+t}dt$ is studied in \cite{RZ}, where Radchenko and Zagier investigated the connection among $J(x)$, Stark's conjecture,  cohomology of the modular group and so on.  Also see \cite{cohen, mw}. We have further studied the arithmetic properties of $J(x)$ in a recent paper \cite{CK}.

\item In the case of  $\alpha, \beta \in \mathbb{Z},$  $ \mathscr{F}_{2}\left(x; \alpha, \beta \right)$ does not converge. Instead,
the following Herglotz function is introduced in \cite{Z}  to study the first Kronecker  limit formula of real quadratic fields\textup{:} for $ \mathrm{Re}(x) >0,$
\begin{align*}
 F(x) 
=&\int_0^1\left(\frac{1}{1-t}+\frac{1}{\log{t}} \right)  \log(1-t^x) \frac {dt }{t}\nonumber  \\
=&\sum_{n\geq 1} \frac{\psi(nx)-\log{(nx)}}{n},    
\end{align*}
  where    $\displaystyle\psi(x) :=  \frac{\Gamma'(x)}{\Gamma(x)}=
\int_0^{\infty}
\left(\frac{e^{-t}}{t}-\frac{e^{-xt}}{1-e^{-t}}\right)dt$ is the usual digamma function. However, $ \mathscr{F}_{2}\left(x; \alpha, \beta \right)$ reduces to $F(x)$ in the limiting case up to correction factors. See Proposition \textup{\ref{special cases}} below.
\item For each $k>2,$  the following higher Herglotz function was introduced in \cite{VZ} to study the higher Kronecker ``limit" formula of real quadratic fields:
\begin{eqnarray}\label{hk}
\mathscr{F}_k(x)  =\sum_{n\geq 1} \frac{\psi(nx) }{n^{k-1}}.
\end{eqnarray}  
It is shown in Proposition \textup{\ref{special cases}} that $ \mathscr{F}_{k}\left(x\right)$ is also a limiting case of $\mathscr{F}_{k}\left(x;\alpha, \beta \right)$.
\end{enumerate}
\end{remark}

The functions $F(x)$ and $\mathscr{F}_k(x)$ can be obtained as limiting cases of our function $\mathscr{F}_k(x;\alpha,\beta)$:
\begin{proposition}\label{special cases}
Let the functions $F(x)$ and $\mathscr{F}_k(x)$ be defined in \eqref{her int} and \eqref{hk}. Let $\gamma$ be Euler's constant. For $x\in\mathbb{C}'$, we have
\begin{enumerate}
\item $\displaystyle \lim_{\substack{\alpha,\beta\to 0}}\bigg\{-\mathscr{F}_2(x;\alpha,\beta)+\left(\gamma+\log(x)+\log(1-e^{2\pi i \beta})\right)\log\left(1-e^{2\pi i \alpha}\right)+\mathrm{Li}_s'(e^{2\pi i \alpha})|_{s=1}\bigg\}\\=F(x).$\\
\item $\displaystyle \lim_{\alpha,\beta\to0}\left\{-\mathscr{F}_k(x;\alpha,\beta)-\left(\gamma+\log(1-e^{2\pi i\beta}\right)\mathrm{Li}_{k-1}\left(e^{2\pi i\alpha}\right)\right\}=\mathscr{F}_k(x),\ k>2, k\in \mathbb{N}.$
\item $\displaystyle \lim_{\alpha,\beta\to0}\left\{-\mathscr{F}_1(x;\alpha,\beta)-\frac{\left(\gamma+\log(1-e^{2\pi i\beta})+\log(x)\right)}{e^{-2\pi i\alpha}-1}+\frac{\log(1-e^{2\pi i\alpha})}{2x}+\mathrm{Li}_s'(e^{2\pi i \alpha})|_{s=0}\right\}\\=\mathscr{F}_1(x)$,
\end{enumerate}
 where $\mathscr{F}_1(x)$ is the Ramanujan period function which is defined as \cite[Equation (1.2)]{ck2}
\begin{align*}
\mathscr{F}_1(x)=\sum_{p=1}^\infty\left(\psi(px)-\log(px)+\frac{1}{2px}\right).
\end{align*}
In a recent work \cite{ck2}, the present authors studied the function $\mathscr{F}_1(x)$ in the context of period functions of Maass cusp forms.
\end{proposition}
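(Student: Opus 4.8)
The plan is to reduce all three statements to a single asymptotic estimate for the building block $\psi_\beta$ in \eqref{poly} and then to reassemble the stated correction terms from the power–series expansions of the polylogarithms. First I would establish the key lemma that, for fixed $y\in\mathbb{C}'$,
\begin{align*}
\lim_{\beta\to 0}\left(\psi_\beta(y)+\log\left(1-e^{2\pi i\beta}\right)\right)=-\gamma-\psi(y),
\end{align*}
where $\psi$ is the digamma function. This follows by writing, with $w=e^{2\pi i\beta}$,
\begin{align*}
\psi_\beta(y)&=\sum_{q\geq 0}w^{q}\left(\frac{1}{y+q}-\frac{1}{q+1}\right)+\sum_{q\geq 0}\frac{w^{q}}{q+1}\\
&=\sum_{q\geq 0}w^{q}\left(\frac{1}{y+q}-\frac{1}{q+1}\right)-\frac{\log(1-w)}{w}.
\end{align*}
The first series has terms of size $O(q^{-2})$, so as $w\to 1$ it converges to $\sum_{q\geq0}\left(\tfrac{1}{y+q}-\tfrac{1}{q+1}\right)=-\gamma-\psi(y)$ by the standard series for the digamma function, while $-\tfrac{\log(1-w)}{w}+\log(1-w)=\log(1-w)\,\tfrac{w-1}{w}\to 0$. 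This isolates the single $y$-independent divergent piece $-\log(1-e^{2\pi i\beta})$ of $\psi_\beta$ as $\beta\to 0$.

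Second, I would substitute this into the series representation \eqref{higher herglotz}, namely $\mathscr{F}_k(x;\alpha,\beta)=\sum_{p\geq1}p^{-(k-1)}e^{2\pi ip\alpha}\psi_\beta(px)$, and group the entire left–hand combination of the Proposition into a single sum over $p$. The point is that each correction term is exactly a polylogarithmic series evaluated at $e^{2\pi i\alpha}$: one uses $\mathrm{Li}_{k-1}(e^{2\pi i\alpha})=\sum_p p^{-(k-1)}e^{2\pi ip\alpha}$, $\log(1-e^{2\pi i\alpha})=-\sum_p p^{-1}e^{2\pi ip\alpha}$, $(e^{-2\pi i\alpha}-1)^{-1}=\mathrm{Li}_0(e^{2\pi i\alpha})=\sum_p e^{2\pi ip\alpha}$, and $\mathrm{Li}_s'(e^{2\pi i\alpha})|_{s=k-1}=-\sum_p p^{-(k-1)}e^{2\pi ip\alpha}\log p$. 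After grouping, each case becomes $\sum_p p^{-(k-1)}e^{2\pi ip\alpha}B_p(\beta)$, where by the key lemma the bracket $B_p(\beta)$ tends, as $\beta\to0$, to $\psi(px)$ in case (2), to $\psi(px)-\log(px)$ in case (1), and to $\psi(px)-\log(px)+\tfrac{1}{2px}$ in case (3). The subtracted pieces $\log p$, $\log x$ and $\tfrac{1}{2px}$ that the correction terms contribute are precisely those needed both to reconstruct the defining series of $\mathscr{F}_k(x)$, $F(x)$ and the Ramanujan period function in \eqref{hk}, \eqref{her int}, and to render the sum over $p$ absolutely convergent (the summands are $O(p^{-(k-1)}\log p)$ with $k>2$ in case (2), and $O(p^{-2})$ in cases (1) and (3)). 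Letting $\alpha\to0$ then replaces $e^{2\pi ip\alpha}$ by $1$ and yields the three claimed limits.

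Finally, I would justify the two interchanges of limit and summation. The limit $\beta\to0$ inside the sum is handled by dominated convergence once the key lemma is upgraded to a bound $\left|\psi_\beta(px)+\log(1-e^{2\pi i\beta})+\gamma+\psi(px)\right|\le C_p$ with $\sum_p p^{-(k-1)}C_p<\infty$ uniformly for small $\beta$. The more delicate point is the $\alpha\to0$ limit in cases (1) and (3), since there the relevant polylogarithmic series ($\mathrm{Li}_1$, $\mathrm{Li}_0$ and their $s$-derivatives) are only conditionally convergent on the unit circle. The cleanest remedy I expect to use is to carry out the grouping for $\alpha$ with $\mathrm{Im}(\alpha)>0$, where every polylogarithm series converges absolutely and the identities above are literal, and then to pass to real $\alpha$ and to $\alpha\to0$ using the continuity of the analytically continued functions $\mathrm{Li}_s$ and $\mathrm{Li}_s'$ at $e^{2\pi i\alpha}\neq1$. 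The main obstacle is exactly this uniform control: matching the conditionally convergent boundary behaviour of the polylogarithmic correction terms against the tail of $\mathscr{F}_k(x;\alpha,\beta)$ so that the combined series converges uniformly and the double limit may be taken term by term.
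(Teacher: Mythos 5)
Your approach is, at its core, the same as the paper's: your ``key lemma'' $\lim_{\beta\to0}\left(\psi_\beta(y)+\log(1-e^{2\pi i\beta})\right)=-\gamma-\psi(y)$ is exactly the termwise manipulation the paper performs in its proof of part (2), where $-\log(1-e^{2\pi i\beta})$ is expanded as $\sum_{q\geq0}e^{2\pi i\beta(q+1)}/(q+1)$, folded into the series \eqref{hf}, and the digamma series $\psi(px)=-\gamma+\sum_{q\geq0}\left(\frac{1}{1+q}-\frac{1}{px+q}\right)$ is recognized in the limit. Where you genuinely differ: the paper handles part (1) not directly but by citing \cite[Proposition 2.4]{CK} together with the identity $\mathscr{F}(x;e^{2\pi i\alpha},e^{2\pi i\beta})=-\mathscr{F}_2(x;\alpha,\beta)+\frac{1}{x}\mathrm{Li}_2\left(e^{2\pi i\alpha}\right)$, and it gives no argument for part (3) at all, whereas your treatment is self-contained and uniform in $k$. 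You are also more explicit than the paper about the two interchanges of limit and sum (the paper simply passes to the limit termwise); note that once the $\beta$-limit is taken the $p$-series is absolutely convergent, so the iterated limit is easy, and your uniform bound $C_p$ and the $\mathrm{Im}(\alpha)>0$ device are only needed for the joint limit and to make the conditionally convergent groupings honest. That residual work is real but standard (Dirichlet's test/partial summation give continuity of $\mathscr{F}_k(x;\alpha,\beta)$ in $\alpha$ up to the real axis).

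The genuine defect is in case (3). Since $\log(1-e^{2\pi i\alpha})=-\sum_{p\geq1}p^{-1}e^{2\pi i\alpha p}$, the printed correction term $+\frac{\log(1-e^{2\pi i\alpha})}{2x}$ contributes $-\frac{1}{2px}$ to your bracket $B_p(\beta)$, not $+\frac{1}{2px}$ as you claim. With the printed sign the bracket tends to $\psi(px)-\log(px)-\frac{1}{2px}$, which behaves like $-\frac{1}{px}$ because $\psi(y)=\log y-\frac{1}{2y}+O(y^{-2})$; concretely, after the $\beta$-limit the expression equals $\sum_{p\geq1}e^{2\pi i\alpha p}\left(\psi(px)-\log(px)+\frac{1}{2px}\right)+\frac{\log(1-e^{2\pi i\alpha})}{x}$, and the last term diverges as $\alpha\to0$. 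So part (3) cannot hold as printed; it is correct with the term $-\frac{\log(1-e^{2\pi i\alpha})}{2x}$, which is exactly what the $+\frac{1}{2px}$ in the definition of $\mathscr{F}_1(x)$ forces, and that corrected statement is the one your computation actually establishes. You should say this explicitly: as written, your assertion that the correction terms contribute $+\frac{1}{2px}$ is false for the displayed sign, and your proof silently substitutes a corrected statement for the stated one.
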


\medskip
\subsection{Asymptotics and analytic continuation}

For a function $f$, let us denote an asymptotic expansion $\displaystyle f(x) \sim \sum_{n=n_0}^{\infty} a_n(x-a)^n$ as $x\rightarrow a$  in the sense  
 that 
$$f(x)-\sum_{n=n_0}^{N-1}a_n(x-a)^n=\mathcal{O}\left((x-a)^N\right)$$  when $x\rightarrow a$ for every $N.$  When $a=\infty ,$ replace $x-a$ by $\frac{1}{x}.$ With this notation, we have the following:

\begin{proposition}\label{ay}
Let $\alpha, \beta \in\mathbb{R}$ and $\beta\notin\mathbb{Z}.$
\begin{enumerate}
\item  As $x\rightarrow \infty$ in the sector $|\arg(x)|\leq \pi-\delta$, $\delta>0$,

\begin{align}\label{asymptotic of psi_beta}
\psi_\beta(x)\sim\sum_{n=0}^\infty\frac{a_n(\beta)}{x^{n+1}}, \, 
\end{align}
and

\begin{align}\label{asymp of hnf large x}
\mathscr{F}_{k}(x; \alpha,\beta)\sim\sum_{n=0}^\infty\frac{a_n(\beta)}{x^{n+1}}\mathrm{Li}_{k+n}\left(e^{2\pi i\alpha}\right),
\end{align}
where
\begin{align*}
 a_n(\beta):=\frac{d^{n}}{dt^{n}}\left(\frac{1}{1-e^{-t}e^{2\pi i \beta}}\right)\bigg|_{t=0}.
 \end{align*}


\item  As   $x\to 0$ in the sector $|\arg(x)|\leq\pi-\delta$, $\delta>0$, 
\begin{align}\label{asymp of hnf small x}
\quad\mathscr{F}_{k}(x;\alpha,\beta)&\sim\frac{1}{x}\mathrm{Li}_{k}\left(e^{2\pi i\alpha}\right)+\sum_{r=1}^{k-1}(-x)^{r-1} \mathrm{Li}_{k-r}\left(e^{2\pi i\alpha}\right) \mathrm{Li}_{r}\left(e^{2\pi i\beta}\right)\nonumber\\
&\quad-\frac{(-x)^{k-1}}{x+1}\mathrm{Li}_{k-r}\left(e^{2\pi i(\alpha+\beta)}\right)+(-1)^{k-1}\sum_{n=0}^\infty a_n(\alpha) \mathrm{Li}_{k+n}\left(e^{2\pi i\beta}\right)x^{k+n-1}.
\end{align}
\end{enumerate}
\end{proposition}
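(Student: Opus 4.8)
The plan is to treat the two regimes separately, first establishing the expansion of $\psi_\beta$ and then transferring it to $\mathscr{F}_k$ through the Dirichlet series \eqref{higher herglotz} and \eqref{hf}. For large $x$ I start from the Laplace-type integral \eqref{poly}, writing $\psi_\beta(x)=\int_0^\infty e^{-xt}g_\beta(t)\,dt$ with $g_\beta(t)=(1-e^{-t}e^{2\pi i\beta})^{-1}$. Since $\beta\notin\mathbb{Z}$ the denominator does not vanish at the origin, so $g_\beta$ is holomorphic near $t=0$ with Taylor coefficients $g_\beta^{(n)}(0)/n!=a_n(\beta)/n!$, its only singularities being the simple poles on the imaginary axis at $t\in 2\pi i(\beta+\mathbb{Z})$. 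Watson's lemma then gives \eqref{asymptotic of psi_beta}, the factorial in $\int_0^\infty e^{-xt}t^n\,dt=n!\,x^{-n-1}$ cancelling against $a_n(\beta)/n!$. To reach the full sector $|\arg x|\le\pi-\delta$ rather than the naive $|\arg x|<\pi/2$, I rotate the ray of integration inside the right half-plane, where $g_\beta$ is holomorphic and bounded; the rotation is admissible precisely so long as the ray keeps a fixed angular distance from the imaginary axis, which is what $\delta$ provides.

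For $\mathscr{F}_k$ at large $x$ I substitute \eqref{asymptotic of psi_beta}, with $x$ replaced by $px$, into the series \eqref{higher herglotz}, collect powers of $x$, and identify $\sum_{p\ge1}e^{2\pi ip\alpha}p^{-(k+n)}=\mathrm{Li}_{k+n}(e^{2\pi i\alpha})$, which yields \eqref{asymp of hnf large x}. The step that must be justified here is the interchange of the $p$-summation with the asymptotic expansion: I will use the uniform Watson bound $\psi_\beta(y)-\sum_{n=0}^{N-1}a_n(\beta)y^{-n-1}=O(|y|^{-N-1})$ on the sector together with the convergence of $\sum_{p\ge1}p^{-(k-1)}(p|x|)^{-N-1}$, so that the summed tail is genuinely $O(|x|^{-N-1})$.

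For small $x$ I work from the double series \eqref{hf}. Peeling off the $q=0$ contribution gives $x^{-1}\sum_{p\ge1}e^{2\pi ip\alpha}p^{-k}=x^{-1}\mathrm{Li}_k(e^{2\pi i\alpha})$, the polar term. On the $q\ge1$ part I apply the finite geometric expansion $\tfrac{1}{px+q}=\sum_{r=0}^{k-2}\tfrac{(-px)^r}{q^{r+1}}+\tfrac{(-px)^{k-1}}{q^{k-1}(px+q)}$, whose finite part sums termwise (each $\alpha$-sum being a convergent polylogarithm $\mathrm{Li}_{k-1-r}$) and, after reindexing, reproduces the polynomial block $\sum_{r=1}^{k-1}(-x)^{r-1}\mathrm{Li}_{k-r}(e^{2\pi i\alpha})\mathrm{Li}_r(e^{2\pi i\beta})$. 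The decisive device is then to recognise the remainder $(-x)^{k-1}\sum_{p,q\ge1}e^{2\pi i(\alpha p+\beta q)}q^{-(k-1)}(px+q)^{-1}$, upon writing $(px+q)^{-1}=x^{-1}(p+q/x)^{-1}$, as a reflected copy of $\mathscr{F}_k$ evaluated at $1/x$ with the roles of $\alpha$ and $\beta$ interchanged; feeding the already-proven large-argument expansion \eqref{asymp of hnf large x} into $\mathscr{F}_k(1/x;\beta,\alpha)$ then produces the tail $(-1)^{k-1}\sum_{n\ge0}a_n(\alpha)\mathrm{Li}_{k+n}(e^{2\pi i\beta})x^{k+n-1}$.

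The \emph{main obstacle} in the small-$x$ analysis is the borderline contribution of order $x^{k-1}$: this is exactly where the geometric expansion stops converging termwise (the $\alpha$-polylogarithm index drops to $\mathrm{Li}_0$), so it cannot be captured by the naive interchange and must be isolated by hand. Tracking this transition — through the peeling of the $p=0$ term in the reflected series together with the $\mathrm{Li}_0$ contribution emerging from the reflection — is what produces the remaining term of \eqref{asymp of hnf small x}, and it is this bookkeeping across the transition, rather than any single estimate, that is delicate. Throughout, the expansions are understood in the Poincar\'e sense fixed in the excerpt, so at each stage it suffices to bound one fixed finite remainder by the next power of $x$, keeping every interchange at the level of finitely many convergent series plus a uniformly small tail.
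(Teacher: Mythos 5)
Your argument is correct, and for part (1) it coincides with the paper's own proof: both expand $(1-e^{-t}e^{2\pi i\beta})^{-1}$ at $t=0$, apply the extended (sectorial) form of Watson's lemma to obtain \eqref{asymptotic of psi_beta}, and then substitute into the series \eqref{higher herglotz} term by term; your explicit uniform remainder bound $\psi_\beta(px)-\sum_{n<N}a_n(\beta)(px)^{-n-1}=O((p|x|)^{-N-1})$, summed against $p^{-(k-1)}$, is precisely the justification of the interchange that the paper leaves implicit. For part (2) your overall strategy is the same as the paper's -- transfer the large-argument expansion to $x\to 0$ through the reflection $x\mapsto 1/x$ with $\alpha\leftrightarrow\beta$ -- but you obtain the reflection identity by a genuinely different route. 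The paper simply invokes the two-term functional equation \eqref{two term} of Theorem \ref{fe thm}, which it proves later by manipulating the integrals $F_{m,n}$ of \eqref{F}, so its proof of this proposition carries a forward reference; you instead re-derive exactly that identity inline from the double series \eqref{hf}, by peeling off the $q=0$ block, expanding $(px+q)^{-1}$ by the finite geometric identity up to $r=k-2$ (correctly the last index at which the $p$-sum still converges), and identifying the remainder, after removing its $p=0$ term, as $-(-x)^{k-2}\mathscr{F}_k(1/x;\beta,\alpha)-(-x)^{k-1}\mathrm{Li}_k\left(e^{2\pi i\beta}\right)$. This is more elementary and self-contained, and it buys a cleaner logical order at the cost of duplicating a computation the paper gets for free from Theorem \ref{fe thm}. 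One point worth flagging: carried out faithfully, your bookkeeping produces the middle term $-(-x)^{k-1}\mathrm{Li}_k\left(e^{2\pi i\beta}\right)$, which is what \eqref{two term} actually yields, and not the term $-\frac{(-x)^{k-1}}{x+1}\mathrm{Li}_{k-r}\left(e^{2\pi i(\alpha+\beta)}\right)$ printed in \eqref{asymp of hnf small x} and in the paper's proof; that printed term, with its stray index $r$ and shifted argument, appears to have been imported by mistake from the three-term equation \eqref{three term}, so on this point your derivation corrects the statement rather than deviating from it.
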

 

The above   asymptotic expansion  in Proposition \ref{ay} along with \eqref{higher herglotz} readily gives the following result. 
\begin{proposition}\label{anly}
Let $\alpha, \beta\in \mathbb{R}, \beta \notin \mathbb{Z}$. The higher Herglotz-Zagier-Novikov function $\mathscr{F}_k(x;\alpha,\beta),\ k\geq2$ can be extended analytically to $x \in \mathbb{C}'.$
\end{proposition}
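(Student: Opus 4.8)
The plan is to read off the analytic continuation directly from the Dirichlet-series representation \eqref{higher herglotz}, using the large-$x$ asymptotics of the single building block $\psi_\beta$ supplied by Proposition \ref{ay}, and then to invoke the Weierstrass theorem on locally uniform limits of holomorphic functions. First I would continue $\psi_\beta$ itself. Although \eqref{poly} defines $\psi_\beta$ by an integral only for $\mathrm{Re}(x)>0$, the series $\psi_\beta(x)=\sum_{q\geq0}e^{2\pi i\beta q}/(x+q)$ converges for every $x\in\mathbb{C}\setminus\mathbb{Z}_{\leq0}$: since $\beta\notin\mathbb{Z}$ the partial sums $\sum_{q\leq Q}e^{2\pi i\beta q}$ stay bounded, so Dirichlet's test (Abel summation) yields convergence that is locally uniform away from the poles at $x=0,-1,-2,\dots$. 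Hence $\psi_\beta$ is holomorphic on $\mathbb{C}\setminus\mathbb{Z}_{\leq0}$, and in particular on $\mathbb{C}'=\mathbb{C}\setminus(-\infty,0]$, because every non-positive integer lies on the removed ray.

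Next I would fix a compact set $K\subset\mathbb{C}'$. As $K$ is compact and disjoint from $(-\infty,0]$, there exist $\delta>0$ and $c>0$ with $|\arg x|\leq\pi-\delta$ and $|x|\geq c$ for all $x\in K$. For a positive integer $p$ and $x\in K$ one has $\arg(px)=\arg(x)$ and $|px|=p|x|\geq pc$, so $px$ stays in the same sector while $|px|\to\infty$. The leading term of the sectorial expansion \eqref{asymptotic of psi_beta} in Proposition \ref{ay}(1) — whose leading coefficient $a_0(\beta)=(1-e^{2\pi i\beta})^{-1}$ is finite precisely because $\beta\notin\mathbb{Z}$ — provides constants $C=C(\delta)$ and $R$ with $|\psi_\beta(z)|\leq C/|z|$ whenever $|\arg z|\leq\pi-\delta$ and $|z|\geq R$.

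Choosing $P_0$ with $P_0c\geq R$, the tail of \eqref{higher herglotz} is then dominated term by term by
\begin{align*}
\left|\frac{e^{2\pi ip\alpha}\psi_\beta(px)}{p^{k-1}}\right|\leq\frac{C}{|x|\,p^{k}}\leq\frac{C}{c\,p^{k}}\qquad(p\geq P_0,\ x\in K),
\end{align*}
and $\sum_{p}p^{-k}<\infty$ since $k\geq2$. The finitely many initial terms with $p<P_0$ are holomorphic on $\mathbb{C}'$ and bounded on $K$, so the Weierstrass $M$-test gives uniform convergence on $K$. Because each summand is holomorphic on $\mathbb{C}'$ and the convergence is locally uniform there, the Weierstrass theorem shows that the sum is holomorphic on $\mathbb{C}'$; it agrees with the integral definition \eqref{int repres for hh} on $\mathrm{Re}(x)>0$ since \eqref{higher herglotz} was derived from it, which is exactly the claimed continuation.

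The hard part is not the convergence bookkeeping but securing a single estimate for $\psi_\beta(px)$ that is uniform over all $x\in K$ at once. This is what forces the reduction to a fixed sector $|\arg x|\leq\pi-\delta$ together with the lower bound $|x|\geq c$, both of which are available by compactness of $K$ inside $\mathbb{C}'$. It is precisely here that I would use $\arg(px)=\arg(x)$ — valid because $p$ is a positive real — so that multiplication by $p$ keeps $px$ in the sector where the asymptotic bound of Proposition \ref{ay} applies while driving $|px|\to\infty$. Everything else is routine.
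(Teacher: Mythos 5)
Your proof is correct and follows exactly the route the paper indicates: the paper derives Proposition \ref{anly} in one line from the series representation \eqref{higher herglotz} combined with the sectorial asymptotics of $\psi_\beta$ in Proposition \ref{ay}, which is precisely your argument, with the continuation of $\psi_\beta$ via Dirichlet's test and the Weierstrass $M$-test/theorem supplying the details the paper leaves implicit in the word ``readily.''
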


\medskip

\subsection{Two-, three-  and six-term functional equations for $\mathscr{F}_{k}(x;\alpha,\beta)$}

The following  shows that cohomological relations  hold  for the
higher Herglotz-Zagier-Novikov function $\mathscr{F}_{k}(x ; \alpha,\beta)$: 
 
\begin{theorem}\label{fe thm}
Let $k\in\mathbb{N}$ and $\alpha, \beta\in\mathbb{R}\backslash\mathbb{Z}$. For any $x\in \mathbb{C}'$, we have
\begin{enumerate}
\item
\begin{align} \label{two term}
& \mathscr{F}_{k}(x ; \alpha,\beta)+(-x)^{k-2}\mathscr{F}_{k}\left(\frac{1}{x} ; \beta, \alpha \right)\nonumber\\
&=\frac{1}{x}\mathrm{Li}_k\left(e^{2\pi i\alpha}\right)-(-x)^{k-1}\mathrm{Li}_k\left(e^{2\pi i\beta}\right) 
 +\sum_{r=1}^{k-1}(-x)^{r-1} \mathrm{Li}_{k-r}\left(e^{2\pi i\alpha}\right) \mathrm{Li}_{r}\left(e^{2\pi i\beta}\right).
\end{align}

\item
\begin{align}\label{three term}
 &\mathscr{F}_{k}(x ;\alpha,\beta) 
-\mathscr{F}_{k}(x+1;\alpha+\beta,\beta)
+(-x)^{k-2}\mathscr{F}_{k}\left(\frac{x+1}{x}; \alpha+\beta,\alpha \right) 
\nonumber \\
 &=\frac{1}{x} \mathrm{Li}_k\left(e^{2\pi i\alpha}\right)
-\frac{(-x)^{k-1}}{x+1}\mathrm{Li}_{k}\left(e^{2\pi i(\alpha+\beta)}\right)
+\sum_{r=1}^{k-1}(-x)^{r-1}
\mathrm{Li}_{r,k-r}\left(  e^{2\pi i \beta},e^{2\pi i \alpha}\right).
\end{align}

\item
\begin{align}\label{six term}
 &\mathscr{F}_{k}(x ;\alpha,\beta) + \mathscr{F}_{k}(x ; -\alpha, -\beta)
-\mathscr{F}_{k}(x+1;\alpha+\beta,\beta)-\mathscr{F}_{k}(x+1;-\alpha-\beta,-\beta)\nonumber\\
&
+(-x)^{k-2}\mathscr{F}_{k}\left(\frac{x+1}{x}; \alpha+\beta,\alpha \right)+(-x)^{k-2}\mathscr{F}_{k}\left(\frac{x+1}{x}; -\alpha-\beta,-\alpha \right)
\nonumber\\
 &=\frac{1}{x}\left\{\mathrm{Li}_k\left(e^{2\pi i\alpha}\right) +\mathrm{Li}_k\left(e^{-2\pi i\alpha}\right)\right\}
-\frac{(-x)^{k-1}}{x+1} \left\{\mathrm{Li}_{k}\left(e^{2\pi i(\alpha+\beta)}\right)+\mathrm{Li}_{k}\left(e^{-2\pi i(\alpha+\beta)}\right)\right\}\nonumber\\
&\qquad +\sum_{r=1}^{k-1}(-x)^{r-1}\left\{
\mathrm{Li}_{r,k-r}\left(  e^{2\pi i \beta},e^{2\pi i \alpha}\right) +
\mathrm{Li}_{r,k-r}\left(  e^{-2\pi i \beta},e^{-2\pi i \alpha}\right)\right\},
\end{align}
where $\mathrm{Li}_{a,b}(z_1,z_2)$ is the double polylogarithm, defined in \cite{ggl}, as
\begin{align}\label{double polylog}
\mathrm{Li}_{a,b}( z_1, z_2 ): =\sum_{0<p<q}
\frac{z_1^p z_2^q}{p^aq^b}\ (z_i\in\mathbb{C},\ |z_i|<1).
\end{align}
\end{enumerate}
\end{theorem}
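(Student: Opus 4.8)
The plan is to prove all three functional equations from a single source, namely the representation $\mathscr{F}_{k}(x;\alpha,\beta)=\sum_{p\geq1,q\geq0}\frac{e^{2\pi i(\alpha p+\beta q)}}{p^{k-1}(px+q)}$ in \eqref{hf}, by exploiting the $\mathrm{SL}(2,\mathbb{Z})$-action on the cone of index pairs $(p,q)$. First I would establish the two-term relation \eqref{two term}, which corresponds to the involution $S:(p,q)\mapsto(q,p)$ swapping the roles of the two coordinates. The key observation is that the full lattice sum $\sum_{(p,q)\in\mathbb{Z}^2\setminus\{0\}}\frac{e^{2\pi i(\alpha p+\beta q)}}{p^{k-1}(px+q)}$ (interpreted via analytic continuation / Eisenstein summation) is, up to the substitution $x\mapsto1/x$ and the weight factor $(-x)^{k-2}$, symmetric under $S$; splitting the full sum into the four quadrant-type pieces $\{p\geq1,q\geq0\}$, $\{p\geq1,q\leq-1\}$ and their negatives, and matching the piece for $\mathscr{F}_k(x;\alpha,\beta)$ against the reflected piece for $\mathscr{F}_k(1/x;\beta,\alpha)$, leaves exactly the boundary terms along $p=0$ and $q=0$. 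Those boundary contributions are precisely the three polylogarithm terms on the right-hand side: $\frac{1}{x}\mathrm{Li}_k(e^{2\pi i\alpha})$ comes from the $q=0$ edge, $-(-x)^{k-1}\mathrm{Li}_k(e^{2\pi i\beta})$ from the $p=0$ edge after reflection, and the sum $\sum_{r=1}^{k-1}(-x)^{r-1}\mathrm{Li}_{k-r}(e^{2\pi i\alpha})\mathrm{Li}_r(e^{2\pi i\beta})$ from the partial-fraction decomposition of $\frac{1}{p^{k-1}(px+q)}$ that is needed to bring each half-sum into the standard form.

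Next I would derive the three-term relation \eqref{three term} in the same spirit, now using the second generator $T:(p,q)\mapsto(p,p+q)$ (or equivalently the shearing that sends $x\mapsto x+1$). The three terms $\mathscr{F}_k(x;\alpha,\beta)$, $\mathscr{F}_k(x+1;\alpha+\beta,\beta)$, and $(-x)^{k-2}\mathscr{F}_k\left(\frac{x+1}{x};\alpha+\beta,\alpha\right)$ arise from the three ways of decomposing the cone $\{p\geq1,q\geq0\}$ under the orbit of the order-three element $ST$ (the standard cocycle relation for the modular group acting on cusps), and the exponentials $e^{2\pi i\alpha}$, $e^{2\pi i\beta}$ transform by the corresponding linear changes in $(\alpha,\beta)$. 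The right-hand side again collects the boundary/diagonal contributions; here the crucial point is that the double polylogarithm $\mathrm{Li}_{r,k-r}(e^{2\pi i\beta},e^{2\pi i\alpha})$ appears because, after the partial-fraction manipulation, one is summing $\frac{1}{p^{r}q^{k-r}}$ over the region $0<p<q$, which is exactly the defining sum \eqref{double polylog}. I would carry out the partial-fraction identity $\frac{1}{p^{k-1}(px+q)}=\frac{1}{q^{k-1}(px+q)}\cdot\frac{q^{k-1}}{p^{k-1}}$ expanded as a telescoping finite sum plus a remainder, matching the powers of $(-x)^{r-1}$ term by term; this is the bookkeeping that produces the explicit $r$-sum.

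Finally, the six-term relation \eqref{six term} follows almost formally: it is the sum of the three-term relation \eqref{three term} with its image under $(\alpha,\beta)\mapsto(-\alpha,-\beta)$. Since $\mathscr{F}_k$ depends on $(\alpha,\beta)$ only through the exponentials $e^{2\pi i\alpha},e^{2\pi i\beta}$, negating both parameters conjugates every polylogarithm and double polylogarithm, and adding the two instances of \eqref{three term} yields exactly the symmetrized right-hand side of \eqref{six term}. I would note that convergence subtleties (the half-sums are only conditionally convergent, and the full lattice sum requires Eisenstein/Hecke summation) are the genuine analytic content; to be safe I would prove the identities first for $k$ large enough (or for $\mathrm{Re}(x)$ in a range) where \eqref{higher herglotz} converges absolutely and all rearrangements are justified, and then invoke the analytic continuation to $x\in\mathbb{C}'$ from Proposition \ref{anly} together with the identity theorem in $x$ (and continuity in $\alpha,\beta$). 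The main obstacle is precisely this justification of the rearrangement of the conditionally convergent sums and the correct tracking of the boundary edge-terms; the algebra of the $\mathrm{SL}(2,\mathbb{Z})$-action is clean, but getting every polylog coefficient—especially the signs $(-x)^{r-1}$ and the $\frac{(-x)^{k-1}}{x+1}$ factor—to come out exactly right demands careful handling of the $p=0$, $q=0$ and $p=q$ boundary strata.
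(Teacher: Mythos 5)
Your proposal is sound and, for the two- and three-term relations, takes a genuinely different route from the paper. You work directly on the series \eqref{hf}: the two-term identity follows by splitting off the $q=0$ edge (which gives $\frac{1}{x}\mathrm{Li}_k\left(e^{2\pi i\alpha}\right)$), applying on the interior cone $p,q\geq1$ the telescoping partial fraction
\begin{align*}
\frac{1}{p^{k-1}(px+q)}=\sum_{r=1}^{k-1}\frac{(-x)^{r-1}}{q^{r}p^{k-r}}+\frac{(-x)^{k-1}}{q^{k-1}(px+q)},
\end{align*}
and recognizing the leftover sum as $\frac{1}{x}\mathscr{F}_k\left(\frac{1}{x};\beta,\alpha\right)$ minus its $p=0$ edge; the three-term identity follows because the shear $q\mapsto p+q$ shows that $\mathscr{F}_{k}(x;\alpha,\beta)-\mathscr{F}_{k}(x+1;\alpha+\beta,\beta)$ is the sum over the cone $p>q\geq0$, after which the same partial fraction plus the reflected third term cancel except along the diagonal $p=q$, which yields $-\frac{(-x)^{k-1}}{x+1}\mathrm{Li}_{k}\left(e^{2\pi i(\alpha+\beta)}\right)$, while the off-diagonal pieces are exactly the $\mathrm{Li}_{r,k-r}$ sums. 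The paper instead introduces the auxiliary integrals $F_{m,n}$ in \eqref{F} and $G_{m,n}$ in \eqref{G} (integrals of products of polylogarithms, respectively of double polylogarithms), generates the finite $r$-sums by integration-by-parts recurrences, obtains the reflection from the substitution $t\to t/x$, and relates $F_{m,n}$ to $G_{m,n}$ by the very same $p<q$, $p>q$, $p=q$ splitting, but carried out inside integrals that converge absolutely for $\mathrm{Re}(x)>0$. That is what the paper's route buys: no rearrangement of conditionally convergent series ever occurs. In your route the inner $q$-sums decay only like $1/q$, so they are never absolutely convergent — taking $k$ large does not cure this — and each splitting and reindexing must be justified for fixed $p$ by Abel/Dirichlet summation before invoking analytic continuation in $x$; this is feasible and is precisely the caveat you flag, but it is the real work. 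What your route buys is transparency: the cone/$\mathrm{SL}_2(\mathbb{Z})$ structure and the provenance of every right-hand-side term are visible without auxiliary functions. Two adjustments: drop the appeal to a ``full lattice sum interpreted via Eisenstein summation'' in the two-term argument — you never supply that regularization, and the cone-plus-partial-fractions computation you describe immediately afterwards is already complete without it; and note that your six-term step (adding to \eqref{three term} its image under $(\alpha,\beta)\mapsto(-\alpha,-\beta)$) is not an alternative route at all — it is verbatim the paper's final step.
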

For an explanation of the left-hand sides of these functional equations, see Section \ref{com as} below. The three-term functional equations of the form \eqref{three term} appear in diverse areas of mathematics, including period functions for Maass forms, cotangent functions, and double zeta functions, among others. For a comprehensive discussion on this topic, we refer to Zagier's \cite{zagier} insightful paper.
  \medskip

\section{Cohomological aspects}\label{com as}
Note that the values of $\left(\mathscr{D}_{k-1}\mathscr{F}_{2k}\right)(w,w';\alpha,\beta)$ in the second Kronecker limit formula depend only on the   $(2k-1)$st derivative of the function $\mathscr{F}_{2k}(k;\alpha,\beta)$. In this section, we construct a cocycle class using this derivative (see Proposition \ref{co} below).

Let $M=\begin{pmatrix}
a&b\\
c&d
\end{pmatrix}\in\mathrm{GL}^+_2(\mathbb{R})$. Define an action of $GL^+_2(\mathbb{R}) $ on  the space of functions  $f: \mathbb{C}' \times (\mathbb{R} / \mathbb{Z})^2 \rightarrow \mathbb{C}$ as
\begin{eqnarray*}
 \left( \large {f\big{| } }M\right)(x, (\alpha, \beta))=  f\left(\frac{ax+b}{cx+d}, \left(\alpha, \beta\right)M^t\right), 
\end{eqnarray*}
where $M^t$ denotes the transpose of $M.$


The modular group $\Gamma:=\mathrm{SL}_2(\mathbb{Z})$ can be defined with generators $S=\begin{pmatrix}0 & -1\\ 1 & 0\end{pmatrix}$ and $U=\begin{pmatrix}1& -1\\ 1 & 0\end{pmatrix}$, along with the relations:
$$\Gamma =\large< S, U\,  : \,  S^4=U^6= I\large >.$$
Furthermore, let
$$T=\begin{pmatrix}1 & 1\\ 0 & 1\end{pmatrix}=-US,\quad T'=\begin{pmatrix}1 & 0\\ 1 & 1\end{pmatrix}  = -U^2S=TST.$$
 
Let $\mathbb{Z}[\Gamma]$ be the group ring of $\Gamma$ over   $\mathbb{Z}$ and let
 $V$ be a right $\mathbb{Z}[\Gamma]$-module.  
 The cohomology group is
$$H^1(\Gamma, V)=Z^1(\Gamma,V)/ B^1(\Gamma, V)$$ where
$$Z^1(\Gamma,V)=\{ \phi :\Gamma\rightarrow V :\, \phi_{\gamma_1\gamma_2}=\phi_{\gamma_1}|\gamma_2, \forall \gamma\in \Gamma\},$$
and
$$B^1(\Gamma, V):=\{\phi :\Gamma\rightarrow V :\,  \phi_{\gamma}=v_0|\gamma-v_0, \mbox{for some $v_0\in V$}\}.$$
 
\subsection{Cocycle} 
  Now take $V$ as the space of functions on $\mathbb{C}'\times \mathbb{R}^2$ with the action on $\Gamma$ in weight $ 2k$ given by 
$$ F|m:= (F|_{ 2k}m)(x, \alpha, \beta):=
(cx+d)^{-2k}F\left(\frac{ax+b}{cx+d}, (\alpha, \beta) m^t\right),$$ 
for $ m=\sm a& b\\ c& d\esm \in \Gamma.$ 
Then a $1$-cocycle $\phi$ is the same as a pair of functions 
$F=\phi_S$ and    $G=\phi_U=F(x)+(\phi_T|_{2k}S )(x)$   satisfying  
\begin{eqnarray}\label{period}
& F|_{2k}\Large{(}I+(-I)+S+(-S) \Large{)}=0 \nonumber \\
& G|_{2k}\Large{(} I+(-I)+U+(- U)+U^2+(-U^2) \Large{)} =0.
\end{eqnarray}
  So we may rewrite the relation of  $G$ in  (\ref{period}) as
\begin{eqnarray*}
 (F|_{2k} I+(-I))- (F|_{2k}T+(-T)) - (F|_{2k}TST+(-TST))=B , 
\end{eqnarray*}
where
$$B(x; \alpha, \beta):=
 \phi_T|_{2k}S|_{2k}\large{(}I+(-I)+U+(-U)+U^2+(-U^2)  \Large{)}|_{2k}(-S).$$
 

Now take  $(\alpha, \beta) \in \mathcal{S}$ in (\ref{set A})  and consider 
$$\psi_{\alpha,\beta, k}(x):= sgn(x)\sideset{}{'}\sum_{\tiny{
\begin{array}{cc} p , q\geq 0 \\ (p,q)\neq (0,0)\end{array}
}} \frac{e^{2\pi i (\alpha p+ \beta q)}}{(p|x|+q)^{k}},$$
where the summation is marked with a prime sign, indicating that the boundary terms ($p=0$ or $q=0$) are included with a factor of $1/2$.  Note  that,  for any $x\in \mathbb{C}',$
$$-\frac{1}{(2k-1)!} \left(\frac{d}{dx}\right)^{2k-1}\left(\mathscr{F}_{2k}(x; \alpha,\beta)
  +  \frac{  1}{2x}\mathrm{Li}_{2k}(e^{2\pi i  \alpha}) +  \frac{x^{2k-1}}{2}\mathrm{Li}_{2k}(e^{2\pi i  \beta}) \right) 
 =\psi_{\alpha,\beta, 2k}(x).$$

\begin{proposition} \label{co}
For every $k\geq 1, $ the map 
$$\phi_S=\psi_{\alpha,\beta,2k},\, \,  \phi_T=0 $$
can be uniquely extended to a $1$-cocycle for $\Gamma= \mathrm{SL}_2(\mathbb{Z})$ with coefficients in the space of functions on $\mathbb{C}' \times  \mathbb{R}^2$ with weight $2k.$
\end{proposition}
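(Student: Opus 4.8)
The plan is to reduce the assertion to the two explicit relations in \eqref{period} and then to read those off from the functional equations of Theorem \ref{fe thm}. As recalled above, because $\Gamma$ is generated by $S$ and $U$ subject to $S^4=U^6=I$, a $1$-cocycle is the same thing as a pair of values $(\phi_S,\phi_U)$ satisfying \eqref{period}, and any admissible pair extends to a cocycle in exactly one way. With the prescribed data $\phi_S=\psi_{\alpha,\beta,2k}$ and $\phi_T=0$ one has, as noted above, $\phi_U=\phi_S+\phi_T|_{2k}S=\psi_{\alpha,\beta,2k}$, so uniqueness is automatic and the whole content of the proposition is the verification of the two slash-relations $\psi_{\alpha,\beta,2k}|_{2k}(I+(-I)+S+(-S))=0$ and $\psi_{\alpha,\beta,2k}|_{2k}(I+(-I)+U+(-U)+U^2+(-U^2))=0$.

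The engine of the proof is the identity displayed just before the statement, which writes $\psi_{\alpha,\beta,2k}=-\tfrac{1}{(2k-1)!}\tfrac{d^{2k-1}}{dx^{2k-1}}\widetilde{\mathscr{F}}_{2k}$ for the completed function $\widetilde{\mathscr{F}}_{2k}(x;\alpha,\beta):=\mathscr{F}_{2k}(x;\alpha,\beta)+\tfrac{1}{2x}\mathrm{Li}_{2k}(e^{2\pi i\alpha})+\tfrac{x^{2k-1}}{2}\mathrm{Li}_{2k}(e^{2\pi i\beta})$, together with Bol's identity: for each $\gamma\in\Gamma$ the operator $\tfrac{d^{2k-1}}{dx^{2k-1}}$ intertwines the weight-$(2-2k)$ slash action with the weight-$2k$ one, and this persists in the presence of the parameters $(\alpha,\beta)$ since the $\Gamma$-action on them commutes with $\tfrac{d}{dx}$. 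Consequently each of the two slash-relations above becomes, after pulling $\tfrac{d^{2k-1}}{dx^{2k-1}}$ outside, equivalent to the statement that the corresponding symmetrized combination of $\widetilde{\mathscr{F}}_{2k}$ under the weight-$(2-2k)$ action is a polynomial in $x$ of degree at most $2k-2$, since these are precisely the functions annihilated by $\tfrac{d^{2k-1}}{dx^{2k-1}}$.

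I would then supply those polynomiality statements from Theorem \ref{fe thm} with $k$ replaced by $2k$. The role of the correction terms in $\widetilde{\mathscr{F}}_{2k}$ is exactly to cancel boundary contributions: a direct check shows that in the four-term symmetrized sum $\widetilde{\mathscr{F}}_{2k}|_{2-2k}(I+(-I)+S+(-S))$ the $\tfrac1x$- and $x^{2k-1}$-pieces carried by the $\mathrm{Li}_{2k}$ corrections cancel in pairs, so that by the two-term equation \eqref{two term} the surviving $\mathscr{F}_{2k}$-part is identified (after the $\pm$-symmetrization) with the polynomial $\sum_{r=1}^{2k-1}(-x)^{r-1}\mathrm{Li}_{2k-r}(e^{2\pi i\alpha})\mathrm{Li}_r(e^{2\pi i\beta})$, of degree $2k-2$; applying $\tfrac{d^{2k-1}}{dx^{2k-1}}$ yields the first relation. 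For the second relation I use the six-term equation \eqref{six term}, which is already the $(\alpha,\beta)\mapsto(-\alpha,-\beta)$ symmetrization of the three-term equation \eqref{three term}; via $T=-US$ and $TST=-U^2S$ its left-hand side corresponds to the combination $F|_{2k}(I+(-I)-T-(-T)-TST-(-TST))$ from the rewritten form of \eqref{period}, while its right-hand side again splits into $\tfrac1x$- and $\tfrac{(-x)^{2k-1}}{x+1}$-type boundary terms absorbed by the completion (the latter matching the correction attached to the shifted argument $\widetilde{\mathscr{F}}_{2k}(x+1;\cdot)$) and a degree-$(2k-2)$ polynomial $\sum_{r=1}^{2k-1}(-x)^{r-1}\mathrm{Li}_{r,2k-r}(\cdots)$ in the double polylogarithm. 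Differentiating $2k-1$ times then delivers the second relation, and both identities in \eqref{period} are established.

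The hard part is precisely this last matching. One must carry the literal weight-$(2-2k)$ slash action through the computation, where $S$ produces the argument $-1/x$ with parameters $(-\beta,\alpha)$ while \eqref{two term} is phrased with $1/x$ and $(\beta,\alpha)$, and where $T,TST$ produce $x+1$ and $\tfrac{x}{x+1}$ while \eqref{three term} is phrased with $x+1$ and $\tfrac{x+1}{x}$; reconciling these demands the correct substitutions and, crucially, the full $\pm$-symmetrization, which is why the bare three-term equation does not suffice and the six-term equation is needed. Once the boundary $\mathrm{Li}_{2k}$-terms are checked to cancel exactly and the residual polylogarithmic terms are confirmed to have degree at most $2k-2$, the pair $(\psi_{\alpha,\beta,2k},0)$ satisfies \eqref{period} and hence extends uniquely to a $1$-cocycle, completing the proof.
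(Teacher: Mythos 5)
Your reduction is the same as the paper's: since $\phi_T=0$ forces $\phi_U=\phi_S+\phi_T|_{2k}S=\psi_{\alpha,\beta,2k}$, everything comes down to the two relations \eqref{period}, and uniqueness is automatic. But the paper verifies \eqref{period} by a direct manipulation of the defining series of $\psi_{\alpha,\beta,2k}$ (relabelling lattice points of the rotated cones, in the style of Vlasenko--Zagier), whereas you route the verification through the completion of $\mathscr{F}_{2k}$, Bol's identity, and the functional equations of Theorem \ref{fe thm}. That route breaks down precisely at the step you yourself flag as ``the hard part,'' and the $\pm$-symmetrization you invoke there does not close the gap.

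Here is the concrete failure. Equation \eqref{two term} relates the data $(x;\alpha,\beta)$ to $(1/x;\beta,\alpha)$: the parameters are \emph{swapped}. The slash action of $S$ produces $(-1/x;(\alpha,\beta)S^{t})=(-1/x;-\beta,\alpha)$: the parameters are \emph{rotated}. Because $\psi_{\alpha,\beta,2k}$ is, by the literal definition (the factors $\operatorname{sgn}(x)$ and $|x|$), an odd function of $x$ with \emph{unchanged} phases, one gets for $x>0$
\begin{align*}
(\psi_{\alpha,\beta,2k}|_{2k}S)(x)=-x^{-2k}\psi_{-\beta,\alpha,2k}(1/x)=-\psi_{\alpha,-\beta,2k}(x),
\end{align*}
where the last equality is exactly what your Bol/two-term-FE mechanism yields on the positive axis (there it correctly gives $x^{-2k}\psi_{\beta',\alpha',2k}(1/x)=\psi_{\alpha',\beta',2k}(x)$). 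The first relation of \eqref{period} then reduces to
\begin{align*}
\psi_{\alpha,\beta,2k}+\psi_{-\alpha,-\beta,2k}-\psi_{\alpha,-\beta,2k}-\psi_{-\alpha,\beta,2k}=0,
\qquad\text{i.e.}\qquad
{\sum}'\,\frac{\sin(2\pi\alpha p)\sin(2\pi\beta q)}{(px+q)^{2k}}=0,
\end{align*}
which is false in general: for $2k=2$, $\alpha=\beta=1/4$, $x=1$ the primed sum equals $\tfrac14\log 2$. So the sign flip on $\beta$ produced by the literal action is not cured by symmetrizing over $(\alpha,\beta)\to(-\alpha,-\beta)$, since the obstruction above is already even under that substitution. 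What is missing is the lattice-geometric step that constitutes the paper's ``direct check'': when the slash action lands on the negative half-line, $\psi$ there must be read as the sum over the \emph{reflected} lattice cone, so that the phase attached to $\alpha$ is conjugated, i.e.\ for $y<0$ one must use $\psi(y)=-{\sum}'\,e^{2\pi i(-\alpha p+\beta q)}(p|y|+q)^{-2k}$. With that reading one gets $(\psi|_{2k}S)(x)=-x^{-2k}\psi_{\beta,\alpha,2k}(1/x)=-\psi_{\alpha,\beta,2k}(x)$, so $\psi|_{2k}(I+S)=0$ holds outright (no symmetrization needed, and your Bol/FE transfer then becomes a legitimate alternative to the series relabelling); similarly the $U$-relation follows from the cone decomposition underlying \eqref{three term}, so the six-term equation is not what repairs the mismatch either. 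Two smaller unaddressed points: the matrix implementing $x\mapsto1/x$ has determinant $-1$, so Bol's identity carries a factor $(\det)^{2k-1}=-1$ you never track; and for the boundary weights $1/2$ in $\psi$ to come out right, the $\mathrm{Li}_{2k}$-corrections in the completion must enter with minus signs, not the plus signs you (following the paper's display) use.
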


\subsection{ Connection between $\mathscr{F}_{k}(x; \alpha,\beta)$ and a generalized Dedekind eta function} 
\bigskip

Consider the following Binet-type integral for $\psi_\beta(x)$ :

\begin{lemma}\label{new int repr}
Let $\psi_\beta(x)$ be defined in \eqref{poly}. Let $\psi(x)$ be the usual digamma function. Let $x>0$ and $0<\beta<1$. Then we have
\begin{align*}
\psi_\beta(x)&=\frac{1}{2x}-\frac{i}{2\pi x}\left(\psi(\beta)-\psi(1-\beta)\right)+i\int_{-\infty}^\infty\left(\frac{1}{it-x}+\frac{1}{ x}\right)\frac{e^{2\pi\beta t}}{e^{2\pi t}-1}dt\\
&=
\frac{1}{2x}-\frac{i}{2\pi x}\left(\psi(\beta)-\psi(1-\beta)\right)\\
&\quad+  \int_{0}^{i \infty} \left(\frac{1}{  t+ x}-\frac{1}{ x}\right)\frac{e^{ 2\pi i \beta t}}{1-e^{ 2\pi it}}dt 
  + \int_0^{i\infty}\left(\frac{1}{ t- x}+\frac{1}{ x}\right)
\frac{e^{  2\pi (1- \beta) it}}{1-e^{  2\pi i t} }dt.
\end{align*}
\end{lemma}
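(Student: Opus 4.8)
The plan is to prove the first displayed identity by a twisted Abel--Plana (contour) argument applied to the series $\psi_\beta(x)=\sum_{q\geq 0}e^{2\pi i\beta q}/(x+q)$ recorded just after \eqref{poly}, and then to deduce the second identity from the first by an elementary splitting and rotation of the line of integration. First I would introduce the meromorphic kernel $K(z)=\dfrac{e^{2\pi i\beta z}}{1-e^{2\pi i z}}$, whose only singularities are simple poles at the integers $z=n$ with $\mathrm{Res}_{z=n}K(z)=-\tfrac{1}{2\pi i}e^{2\pi i\beta n}$, and set $g(z)=K(z)/(x+z)$. Since $0<\beta<1$, the kernel decays exponentially as $|\mathrm{Im}(z)|\to\infty$ (like $e^{-2\pi\beta\,\mathrm{Im}(z)}$ in the upper half-plane and like $e^{2\pi(1-\beta)\mathrm{Im}(z)}$ in the lower half-plane), while $1/(x+z)\to 0$ as $\mathrm{Re}(z)\to+\infty$. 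I would integrate $g$ around a rectangle with left edge on $\mathrm{Re}(z)=-\epsilon$ (with $0<\epsilon<1$, chosen so that the pole $z=-x$ and the negative integers remain outside), right edge on $\mathrm{Re}(z)=N+\tfrac12$, and horizontal edges at $\mathrm{Im}(z)=\pm T$. With the appropriate orientation the residue theorem identifies the contour integral with the partial sum $\sum_{n=0}^{N}e^{2\pi i\beta n}/(x+n)$; letting $T\to\infty$ and then $N\to\infty$, the right and horizontal edges vanish by the decay just noted, reducing $\psi_\beta(x)$ to the single integral of $g$ along the vertical line $\mathrm{Re}(z)=-\epsilon$.

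Next I would let $\epsilon\to 0^{+}$, moving this line onto the imaginary axis. The only obstruction is the pole of $g$ at $z=0$ coming from $K$, which lies just to the right of the line; indenting the imaginary axis by a small semicircle into $\mathrm{Re}(z)<0$ yields a half-residue contribution equal to $\tfrac{1}{2x}$ (using $\mathrm{Res}_{z=0}g=-\tfrac{1}{2\pi i x}$), while the rest is the principal-value integral over the imaginary axis. Parametrizing $z=is$ and substituting $s=-t$ turns this into $i\,\mathrm{PV}\!\int_{-\infty}^{\infty}\frac{1}{it-x}\,\frac{e^{2\pi\beta t}}{e^{2\pi t}-1}\,dt$, so that at this stage
\[
\psi_\beta(x)=\frac{1}{2x}+i\,\mathrm{PV}\!\int_{-\infty}^{\infty}\frac{1}{it-x}\,\frac{e^{2\pi\beta t}}{e^{2\pi t}-1}\,dt .
\]

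To finish the first identity I would write $\frac{1}{it-x}=\bigl(\frac{1}{it-x}+\frac1x\bigr)-\frac1x$. The bracketed term vanishes at $t=0$ and cancels the pole of $e^{2\pi\beta t}/(e^{2\pi t}-1)$, giving exactly the convergent integral in the statement, while the correction is $-\frac{i}{x}\,\mathrm{PV}\!\int_{-\infty}^{\infty}\frac{e^{2\pi\beta t}}{e^{2\pi t}-1}\,dt$. The remaining principal value is the standard integral $\mathrm{PV}\!\int_{-\infty}^{\infty}\frac{e^{\beta u}}{e^{u}-1}\,du=\psi(\beta)-\psi(1-\beta)=-\pi\cot(\pi\beta)$ (after $u=2\pi t$), which produces precisely the term $-\frac{i}{2\pi x}\bigl(\psi(\beta)-\psi(1-\beta)\bigr)$ and yields the first equality. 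For the second equality I would split $\int_{-\infty}^{\infty}=\int_{0}^{\infty}+\int_{-\infty}^{0}$ and rotate each half onto the positive imaginary axis via $t\mapsto it$ on the first half and $t\mapsto -it$ on the second; a short computation converts $e^{2\pi\beta t}/(e^{2\pi t}-1)$ into $e^{2\pi i\beta t}/(1-e^{2\pi i t})$ and into $e^{2\pi i(1-\beta)t}/(1-e^{2\pi i t})$ respectively, and turns $\frac1x$ into $-\frac1x$ on the half coming from the negative real axis, producing the two integrals $\int_0^{i\infty}$ displayed in the lemma.

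I expect the main obstacle to be the $\epsilon\to 0^{+}$ step: justifying the passage to the principal-value integral together with the exact half-residue bookkeeping at $z=0$, and rigorously controlling the right and horizontal edges, where $1/(x+z)$ decays only like $1/|z|$, so that one must exploit the exponential decay of $K$ and take the limits $T\to\infty$, $N\to\infty$ in the correct order. The evaluation of the principal-value integral in terms of $\psi(\beta)-\psi(1-\beta)$, together with the reflection formula $\psi(1-\beta)-\psi(\beta)=\pi\cot(\pi\beta)$ used to match the stated form, is standard but must be carried through with careful attention to signs.
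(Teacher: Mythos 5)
Your proposal is correct, and all the checkable details go through: the residues $\mathrm{Res}_{z=n}K=-\tfrac{1}{2\pi i}e^{2\pi i\beta n}$ and $\mathrm{Res}_{z=0}g=-\tfrac{1}{2\pi ix}$, the exponential decay rates $e^{-2\pi\beta\,\mathrm{Im}(z)}$ and $e^{-2\pi(1-\beta)|\mathrm{Im}(z)|}$ in the two half-planes, the half-residue contribution $\tfrac{1}{2x}$ from the leftward indentation, the evaluation $\mathrm{PV}\int_{-\infty}^{\infty}\frac{e^{\beta u}}{e^{u}-1}\,du=\int_0^\infty\frac{e^{-(1-\beta)u}-e^{-\beta u}}{1-e^{-u}}\,du=\psi(\beta)-\psi(1-\beta)$, and the reparametrizations $t=-i\tau$ that turn the two real half-line integrals into the two $\int_0^{i\infty}$ integrals in the statement (the positive half-line producing the $e^{2\pi i(1-\beta)t}$ integral and the negative half-line the $e^{2\pi i\beta t}$ one). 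However, your route is genuinely different from the paper's. The paper never touches the partial sums of $\sum_{q\geq0}e^{2\pi i\beta q}/(x+q)$ by contour integration; instead it writes $\psi_\beta(x)-\tfrac1x$ as a Mellin--Barnes integral $\frac{1}{2\pi i}\int_{(\lambda)}\Gamma(s)\Gamma(1-s)\mathrm{Li}_{1-s}(e^{2\pi i\beta})x^{-s}\,ds$ via Parseval's formula for Mellin transforms, converts $\mathrm{Li}_{1-s}(e^{2\pi i\beta})$ into Hurwitz zeta functions $\zeta(s,\beta)$, $\zeta(s,1-\beta)$ using the functional equation of the Lerch zeta function, evaluates the resulting line integrals by further Parseval applications, and finally shifts the line of integration across $s=1$, the residue there producing the $-\tfrac{1}{2x}-\tfrac{i}{2\pi x}(\psi(\beta)-\psi(1-\beta))$ correction. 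Your Abel--Plana-style argument is more elementary and self-contained: it needs only the residue theorem and the classical integral representation of the digamma function, and it makes visibly transparent where the $\tfrac{1}{2x}$ (half-residue at $z=0$) and the digamma term (principal value of the kernel across its pole) come from. What the paper's route buys in exchange is consistency with the Mellin-transform machinery used throughout Section 7 and an explicit appearance of the Lerch/Hurwitz zeta functions, which ties this lemma thematically to the generalized Dedekind eta function application (Proposition \ref{bi}) that it feeds into. The technical caveats you flag --- the order of limits $T\to\infty$ then $N\to\infty$, and the principal-value bookkeeping at $z=0$ --- are real but routine, and your plan handles them correctly.
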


\bigskip
With the aid of the Binet-type integral, given in Lemma \ref{new int repr}, we establish a connection between the higher Herglotz-Zagier-Novikov function $\mathscr{F}_{k}(x; \alpha,\beta)$ and the generalized Dedekind eta-function introduced by Berndt \cite{be}.

\begin{proposition}\label{bi}
Let $k\in\mathbb{N},\ \alpha\in\mathbb{R}$ and  $0<\beta<1$. For $x>0$, we have
\begin{align}\label{cc}
\mathscr{F}_{k}(x; \alpha,\beta) + \mathscr{F}_{k}(x; \alpha, -\beta)=\frac{1}{x}\mathrm{Li}_{k}(e^{2\pi i \alpha})  + \int_{0}^{i \infty} \left(\frac{1}{  t+ x}+\frac{1}{t-x}\right)   H( t,2-k,\alpha,\beta)dt, 
\end{align}
where
\begin{eqnarray*}\label{defnG}
  H( \tau,k,\alpha, \beta):=   A(\tau, k, \beta, \alpha)+A(\tau, k, -\beta,\alpha),  
\end{eqnarray*}
and
\begin{eqnarray}\label{gen ded}
A(\tau, s, \beta, \alpha ):=\sum_{m> -\beta,}\sum_{n \geq 1} n^{s-1} e^{2\pi i n  ( \alpha+\beta \tau)} e^{2\pi i mn\tau },\   \mathrm{Im}(\tau)>0, s\in\mathbb{C},  
\end{eqnarray}
	is the generalized Dedekind eta-function \textup{(}see \cite[p.~496]{be} and \cite{lew}\textup{)}.
\end{proposition}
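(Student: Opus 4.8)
The plan is to reduce the claim to a single application of the Binet-type integral of Lemma \ref{new int repr} for the one function $\psi_\beta$, and then to weight by $e^{2\pi ip\alpha}/p^{k-1}$ and sum over $p$ via the series representation \eqref{higher herglotz}; the conceptual point is that the symmetric combination $\psi_\beta+\psi_{-\beta}$ is what produces both the kernel $\frac{1}{t+x}+\frac{1}{t-x}$ and the generalized eta-function $H$. First I would note that, since $\psi_\beta(x)=\sum_{q\geq0}e^{2\pi i\beta q}/(x+q)$ depends only on $\beta$ modulo $1$, one has $\psi_{-\beta}=\psi_{1-\beta}$ with $1-\beta\in(0,1)$, so Lemma \ref{new int repr} applies to \emph{both} $\psi_\beta$ and $\psi_{-\beta}=\psi_{1-\beta}$. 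Adding the two Binet formulas, the terms $\frac{1}{2x}$ combine to $\frac1x$, the digamma terms $-\frac{i}{2\pi x}(\psi(\beta)-\psi(1-\beta))$ and $-\frac{i}{2\pi x}(\psi(1-\beta)-\psi(\beta))$ cancel, and both functions contribute the \emph{same} kernel $\bigl(e^{2\pi i\beta t}+e^{2\pi i(1-\beta)t}\bigr)/(1-e^{2\pi it})$, paired with $\frac{1}{t+x}-\frac1x$ in the first integral and with $\frac{1}{t-x}+\frac1x$ in the second; the $\pm\frac1x$ pieces then cancel, leaving the base identity
\begin{align*}
\psi_\beta(x)+\psi_{-\beta}(x)=\frac1x+\int_0^{i\infty}\left(\frac{1}{t+x}+\frac{1}{t-x}\right)\frac{e^{2\pi i\beta t}+e^{2\pi i(1-\beta)t}}{1-e^{2\pi it}}\,dt.
\end{align*}

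Next I would replace $x$ by $px$ and substitute $t\mapsto pt$ in the integral; since $p>0$ fixes the ray $[0,i\infty)$ and the Jacobian $p\,dt$ cancels the $1/p$ from the shifted poles, the kernel becomes $\bigl(\frac{1}{t+x}+\frac{1}{t-x}\bigr)\bigl(e^{2\pi i\beta pt}+e^{2\pi i(1-\beta)pt}\bigr)/(1-e^{2\pi ipt})$ and the constant term becomes $\frac{1}{px}$. Multiplying by $e^{2\pi ip\alpha}/p^{k-1}$ and summing over $p\geq1$ using \eqref{higher herglotz}, the constant terms assemble into $\frac1x\sum_{p\geq1}e^{2\pi ip\alpha}/p^{k}=\frac1x\mathrm{Li}_k(e^{2\pi i\alpha})$, the first term on the right of \eqref{cc}. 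After interchanging $\sum_p$ with the integral, the inner sum is $\sum_{p\geq1}\frac{e^{2\pi ip\alpha}}{p^{k-1}}\cdot\frac{e^{2\pi i\beta pt}+e^{2\pi i(1-\beta)pt}}{1-e^{2\pi ipt}}$, and it remains to identify this with $H(t,2-k,\alpha,\beta)$. This is a direct evaluation of the $m$-sums in \eqref{gen ded}: for $A(\tau,s,\beta,\alpha)$ the range $m>-\beta$ gives $m\geq0$ and a geometric series equal to $1/(1-e^{2\pi in\tau})$, whereas for $A(\tau,s,-\beta,\alpha)$ the range $m>\beta$ gives $m\geq1$, producing the extra factor $e^{2\pi in\tau}$ and hence the shift $\beta\mapsto1-\beta$ in the exponent; with $s=2-k$ one has $n^{s-1}=n^{1-k}$, and the two pieces sum to exactly the inner series above.

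The main obstacle is analytic rather than formal: justifying the interchange of $\sum_p$ and $\int_0^{i\infty}$ and the convergence of the final integral. On the ray $t=iu$ the kernel $\frac{1}{t+x}+\frac{1}{t-x}=\frac{2t}{t^2-x^2}$ vanishes linearly as $u\to0^+$, which cancels the simple pole $\sim\frac{i}{\pi t}\mathrm{Li}_k(e^{2\pi i\alpha})$ of $H(t,2-k,\alpha,\beta)$ at $t=0$, while as $u\to\infty$ every summand decays exponentially because $0<\beta<1$ keeps both $e^{2\pi i\beta pt}$ and $e^{2\pi i(1-\beta)pt}$ small on the ray. A term-wise estimate that is summable in $p$ near $t=0$ (where the linear vanishing of the kernel meets the simple pole of the summand) together with exponential decay elsewhere furnishes a dominating function, so that Fubini (or dominated convergence) applies when $k\geq2$; the boundary case $k=1$ is handled by Abel summation against the oscillating factor $e^{2\pi ip\alpha}$ with $\alpha\notin\mathbb{Z}$. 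Finally, no pole of $\frac{1}{t-x}$ meets the contour, since $x>0$ is real while $t$ runs along the imaginary axis, so the right-hand side of \eqref{cc} is well defined throughout.
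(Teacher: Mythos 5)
Your proposal is correct and follows essentially the same route as the paper's proof: both rest on Lemma \ref{new int repr} applied at each argument $px$, the rescaling $t\mapsto pt$, the series representation \eqref{higher herglotz}, the symmetrization $\beta\leftrightarrow 1-\beta$ (which cancels the digamma terms and merges the two kernels into $\frac{1}{t+x}+\frac{1}{t-x}$), and the geometric-series evaluation of the $m$-sums identifying $H(t,2-k,\alpha,\beta)$; whether one symmetrizes at the level of $\psi_\beta$ before summing over $p$ (as you do) or at the level of $\mathscr{F}_k$ after summing (as the paper does) is immaterial. Your closing paragraph on the interchange of sum and integral and the behaviour of the integrand at $t=0$ and $t=i\infty$ is a welcome addition, since the paper performs these steps formally without justification.
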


\bigskip

\begin{remark}
\begin{enumerate}
\item The following generalized Eisenstein series was studied by Berndt \cite{be}:
\begin{align}\label{berndt es}
G(  \tau, s,    \beta,\alpha  ):= \sideset{}{'}\sum_{m,n=-\infty}^\infty\frac{1}{((m+\beta)\tau+n+\alpha)^s},\quad \alpha, \beta\in \mathbb{R},
\end{align}
where  $m$ and $n$ range over all pairs of integers except for the possible pair $m=-\beta$ and $n=-\alpha$.
\begin{enumerate}
\item For $\mathrm{Re}(s)>2,$
\begin{align}\label{gde and}
&\frac{\Gamma(s)}{(-2\pi i )^s} G( \tau,s,  \beta,\alpha )=A(  \tau,  s, \beta, \alpha )+e^{ \pi is} A(  \tau, s,   -\beta, -\alpha).
\end{align} 
\item $G( \tau, s,  \beta,\alpha )$ can be analytically continued to the entire $s$-plane \cite[Theorem 1]{be}.\\

\item The generalized eta function $A(  \tau,  s, \beta, \alpha )$ reduces to the Dedekind eta function $\eta(\tau)$ \cite[p.~499]{be}: $$A(\tau,0,0,0)=\pi i\tau/12-\log\eta(\tau).$$
\end{enumerate}
\item Siegel \cite[p.~47]{Se} derived the functional equation of  the following zeta series
\begin{eqnarray*}
 G^*(\tau, s, \beta, \alpha):= {\sum_{m,n\in \mathbb{Z}}}'\frac{\mathrm{Im}(\tau)^s}{ |(m+\beta)\tau+n+\alpha|^{2s}} ,\quad \mathrm{Re}(s)>2, 
\end{eqnarray*} 
in studying the second Kronecker limit formula of imaginary quadratic fields. 
\end{enumerate}
\end{remark}
\bigskip
\begin{remark}
Lemma \textup{\ref{new int repr}} is an analogue of the Binet integral representation of $\psi(x)$  \cite[p.~251]{ww}:
\begin{align*}
\psi(x)=\log x-\frac{1}{2x}-\int_0^\infty\frac{1}{e^{2\pi t}-1}\frac{2t}{t^2+x^2}dt.
\end{align*}
Using the above integral formula, Radchenko and Zagier obtained the following integral representation of the Herglotz-Zagier function $F(x)$ \cite[Section 7.3]{RZ}:
\begin{align}\label{rdint}
F(x)=-\frac{\pi^2}{12x}-\int_0^{i\infty}\left(\frac{1}{  t+ x}+\frac{1}{t-x}\right)H(t)dt,
\end{align}
where $H(\tau)=\log(q^{1/24}/\eta(\tau))$, here, as usual, $q=e^{2\pi i t}$ and $\eta(\tau)=q^{\frac{1}{24}}\prod_{n\geq 1}(1-q^n), q=e^{2\pi i \tau},$ 
is the classical Dedekind eta function. 
Therefore, \eqref{cc} can be considered as an analogue of \eqref{rdint}.
\end{remark}
\section{Numerical examples of the second Kronecker limit formula}\label{num}
In this section, we provide a numerical example of the second Kronecker limit formula in Theorem \ref{main} above.


Recall that when $(\alpha,\beta)$ is restricted to the set $\mathcal{S}$, the function $\mathcal{Z}(k,(\alpha,\beta),\mathscr{B})$ defined above in \eqref{general zeta} coincides with the zeta function over real quadratic fields introduced in \eqref{qdr}. For instance, in the case $(\alpha,\beta)=\left(0.5,0.5\right)$, which appears in the table below, we have $$D\  \mathcal{Z}(2,(\alpha,\beta),\mathscr{B})=\zeta(2,(\alpha,\beta),\mathscr{B}).$$ 

To prepare a table that shows that both sides of the result in \eqref{table eqn} matches perfectly, we consider the following example.

The number field $K=\mathbb{Q}(\sqrt{3})$ has two narrow classes $\mathscr{B}_0$ and $\mathscr{B}_1$ with the corresponding sets of reduced quadratic irrationalities being \cite[p.~36]{VZ}
\begin{align*}
\mathrm{Red}(\mathscr{B}_0)=\{2+\sqrt{3}\} \qquad \mathrm{and}\qquad \mathrm{Red}(\mathscr{B}_1)=\left\{1+\frac{1}{\sqrt{3}},\frac{3+\sqrt{3}}{2}\right\}.
\end{align*}

We utilized Mathematica to compute the values presented in the following table.
 
\bigskip

\begin{center}
\begin{tabular}{ |p{1.8cm}|p{1.2cm}|p{1.3cm}|p{4.1cm}|p{4.1cm}|}
 \hline
 \vspace{0.1mm}$\mathrm{Red}(\mathscr{B})$  &  \vspace{1mm}\hspace{4mm}$\alpha$ & \vspace{0.1mm}\hspace{4mm}$\beta$& \vspace{0.001mm} \hspace{6mm}$D\ \mathcal{Z}(2,(\alpha,\beta),\mathscr{B})$ & \vspace{0.01mm}\hspace{2mm} RHS of HKLF \eqref{table eqn}\\
 \hline
 \vspace{1.1cm}\hspace{2mm}$\mathrm{Red}(\mathscr{B}_0)$\newline$=\{2+\sqrt{3}\}$   & 0.5 \newline\newline \newline 0.3562 \newline\newline \newline  2.9748 & 0.5\newline \newline \newline -0.4052 \newline\newline \newline 0.6723 & $-11.127412237247254\cdots$\newline$+6.11342\cdots\times 10^{-20 }i$\newline\newline $-7.259415409209688\cdots$\newline $+8.700347577135265\cdots i$\newline\newline $12.451416961455129\cdots-$\newline $2.501571359672215\cdots i$&$-11.12741223912468\dots$\newline $+1.30095\dots\times10^{-15} i$\newline \newline $ -7.259415410306584\dots$\newline$+8.700347578594402\dots i$ \newline\newline $12.451416963412164\dots-$\newline $2.5015713592878965\dots i$\\ 
\hline
\vspace{0.8cm}\hspace{2mm}$\mathrm{Red}(\mathscr{B}_1)$\newline$=\{1+\frac{1}{\sqrt{3}},\newline \frac{3+\sqrt{3}}{2}\}$   & 0.5 \newline\newline \newline 0.3562 \newline\newline \newline  2.9748 & 0.5\newline \newline \newline -0.4052 \newline\newline \newline 0.6723 & $-3.9608460492249566\dots$\newline$+3.87885\dots\times 10^{-20 }i$\newline\newline $-2.562703367148864\dots$\newline $+3.125265764725667\dots i$\newline\newline $4.508649638403528\dots-$\newline $0.6044254869826673\dots i$ & $-3.960846051402042\dots$\newline $+4.48482\dots\times10^{-16} i$\newline \newline $-2.562703368470003\dots$\newline$+3.125265766429505\dots i$ \newline\newline $4.50864964043679\dots-$\newline $0.6044254870852179\dots i$\\ 
\hline
\end{tabular}
\end{center}
  \section{Rational zeta values}\label{zeta values}
The following result had been studied in \cite{VZ} when $\alpha \in \mathbb{Z}:$

\begin{lemma}\label{rz}
Let $\alpha\in\mathbb{Q}\backslash\mathbb{Z}$ such that $N(\epsilon-1)\alpha\in\mathbb{Z}$. Let $ \mathscr{B} \rightarrow \mathscr{I}(\mathscr{B})$ be an invariant of narrow ideal classes defined by
\begin{eqnarray}\label{deffxy}
\mathscr{I}( \mathscr{B})=\sum_{w\in \mathrm{Red}(\mathscr{B})} F(w,w', \alpha, \alpha)
\end{eqnarray}
for some function $F : \mathbb{R}^2\times (\mathbb{R}/\mathbb{Z})^2   \rightarrow \mathbb{C}.$
 Suppose $F$ has the form
\begin{eqnarray*}
F(x,y, \alpha,\alpha)= g(x-1,y-1, 0, \alpha) - g\left(1-\frac{1}{x}, 1-\frac{1}{y}, 0,\alpha\right)
\end{eqnarray*}
for some function $g(x,y,0,\alpha).$  Then
\begin{enumerate}
\item For any narrow ideal class $\mathscr{B}$ we have
\begin{eqnarray*}
\mathscr{I}(\mathscr{B})=\sum_{x\in \mathrm{Red}_w(\mathscr{B})} g(x,x', 0,\alpha)-\sum_{x\in \mathrm{Red}_w(\mathscr{B}^*)}g\left(\frac{1}{x}, \frac{1}{x'}, 0,\alpha\right).
\end{eqnarray*}

\item If $g(x,y,0, \alpha)$ has the form $\displaystyle H_1(x-y, 0,\alpha)+H_2\left(\frac{1}{x}-\frac{1}{y}, 0,\alpha\right)$ for some functions
$H_1, H_2 : \mathbb{R} \times (\mathbb{R}/\mathbb{Z})^2  \rightarrow \mathbb{C},$ then $\mathscr{I}(\mathscr{B})=0$ for all classes $\mathscr{B}.$
\end{enumerate}
 \end{lemma}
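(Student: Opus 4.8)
The plan is to treat the statement as a pure reduction-theory identity in the spirit of Vlasenko--Zagier, in which the parameter $\alpha$ is an inert spectator: it is simply transported through $F$, $g$, $H_1$, $H_2$, and the hypothesis $N(\epsilon-1)\alpha\in\mathbb{Z}$ serves only to guarantee that the twisted quantity $\mathscr{I}(\mathscr{B})$ is a genuine invariant of the narrow class (so that the cyclic manipulations below are legitimate). I would first expand $\mathscr{I}(\mathscr{B})$ using the postulated shape of $F$. Writing $\mathrm{Red}(\mathscr{B})=\{w_1,\dots,w_r\}$ with cycle $((b_1,\dots,b_r))$, $b_k=\lceil w_k\rceil\ge 2$, this yields
\begin{align*}
\mathscr{I}(\mathscr{B})=\sum_{k=1}^r g\bigl(w_k-1,\,w_k'-1,\,0,\alpha\bigr)-\sum_{k=1}^r g\Bigl(1-\tfrac1{w_k},\,1-\tfrac1{w_k'},\,0,\alpha\Bigr),
\end{align*}
so the entire content is a combinatorial telescoping of these two sums against the continued-fraction dynamics.

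The engine is the relation $w_k-(b_k-1)=1-1/w_{k+1}$ (indices mod $r$), obtained directly from $w_k=b_k-1/w_{k+1}$. Recall that $\mathrm{Red}_w(\mathscr{C})$ consists of the weakly reduced elements, namely those $x\in\mathscr{C}$ with $x>1$ and $-1<x'<0$. In the first sum $(w_k-1)'=w_k'-1\in(-1,0)$ automatically, so $w_k-1\in\mathrm{Red}_w(\mathscr{B})$ exactly when $w_k>2$, i.e. $b_k\ge 3$; when $b_k=2$ the element $w_k-1=1-1/w_{k+1}$ falls in $(0,1)$ and is a \emph{boundary} term. In the second sum I would write $1-1/w_k=1/X_k$ with $X_k=w_k/(w_k-1)\in\mathscr{B}^*$; here $X_k>1$ always, while $X_k'=w_k'/(w_k'-1)\in(-1,0)$ holds iff $w_k'<1/2$. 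Conjugating $w_{k-1}=b_{k-1}-1/w_k$ gives $1/b_{k-1}<w_k'<1/(b_{k-1}-1)$, so $w_k'<1/2$ is equivalent to $b_{k-1}\ge 3$. Thus the boundary terms of the two sums are matched by $w_k-1=1-1/w_{k+1}$ and cancel in pairs, leaving $\sum_{x\in\mathrm{Red}_w(\mathscr{B})}g(x,x',0,\alpha)$ from the first sum and $\sum_{k:\,b_{k-1}\ge 3}g(1/X_k,1/X_k',0,\alpha)$ from the second.

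It remains to identify $\{X_k:b_{k-1}\ge3\}$ with all of $\mathrm{Red}_w(\mathscr{B}^*)$. For this I would use the involution $\Phi(x)=-1/x'$: it preserves the defining inequalities $x>1,\,-1<x'<0$ and reverses the orientation of the basis $\{1,x\}$, hence carries $\mathscr{B}$ onto $\mathscr{B}^*$ and restricts to a bijection $\mathrm{Red}_w(\mathscr{B})\xrightarrow{\sim}\mathrm{Red}_w(\mathscr{B}^*)$. Consequently $\#\mathrm{Red}_w(\mathscr{B}^*)=\#\mathrm{Red}_w(\mathscr{B})=\#\{k:b_k\ge3\}=\#\{k:b_{k-1}\ge3\}$, and since the distinct numbers $X_k$ (for $b_{k-1}\ge3$) already lie in $\mathrm{Red}_w(\mathscr{B}^*)$, the inclusion is an equality by counting. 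This proves (1). I expect the main obstacle to be exactly this orientation bookkeeping: verifying cleanly that conjugation followed by $x\mapsto-1/x$ interchanges the narrow classes $\mathscr{B}$ and $\mathscr{B}^*$ while preserving weak reducedness, and reconciling the two index sets $\{b_k\ge3\}$ and $\{b_{k-1}\ge3\}$ against the cyclic shift built into $w_k-(b_k-1)=1-1/w_{k+1}$.

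For (2) I would substitute the special form of $g$ into the formula of (1). Because $1/(1/x)-1/(1/x')=x-x'$, one has $g(1/x,1/x',0,\alpha)=H_1(1/x-1/x',0,\alpha)+H_2(x-x',0,\alpha)$, so the $H_1$ and $H_2$ roles are interchanged in the $\mathscr{B}^*$-sum. Applying $\Phi$ and setting $X=\Phi(x)=-1/x'$ gives $1/X=-x'$ and $1/X'=-x$, whence $1/X-1/X'=x-x'$ and $X-X'=1/x-1/x'$; therefore
\begin{align*}
g\bigl(1/X,1/X',0,\alpha\bigr)=H_1(x-x',0,\alpha)+H_2(1/x-1/x',0,\alpha)=g(x,x',0,\alpha).
\end{align*}
Thus the $\mathscr{B}^*$-summand at $X=\Phi(x)$ equals the $\mathscr{B}$-summand at $x$ term by term, and since $\Phi$ is a bijection the two sums in (1) coincide, giving $\mathscr{I}(\mathscr{B})=0$ for every narrow class $\mathscr{B}$.
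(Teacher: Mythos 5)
You have the right telescoping skeleton: expanding $\mathscr{I}(\mathscr{B})$, cancelling boundary terms via $w_k-1=1-1/w_{k+1}$ when $b_k=2$, and the three verifications (that $w_k-1$ is weakly reduced iff $b_k\ge 3$; that $X_k=w_k/(w_k-1)$ lies in $\mathscr{B}^*$ because $\mathbb{Z}+\mathbb{Z}X_k=\frac{1}{w_k-1}\bigl(\mathbb{Z}+\mathbb{Z}w_k\bigr)$ with $N\bigl(\frac{1}{w_k-1}\bigr)<0$; and that $X_k$ is weakly reduced iff $b_{k-1}\ge3$) are all correct — this is the paper's run-by-run telescoping in different packaging. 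The genuine gap is the step on which you then make both (1) and (2) rest: the claim that $\Phi(x)=-1/x'$ ``carries $\mathscr{B}$ onto $\mathscr{B}^*$''. It does not. At the level of modules, $\mathbb{Z}+\mathbb{Z}\Phi(x)=\frac{-1}{x'}\bigl(\mathbb{Z}+\mathbb{Z}x'\bigr)$, so $\Phi$ is conjugation followed by multiplication by $-1/x'$. Conjugation inverts the narrow class (since $\mathfrak{a}\,\mathfrak{a}'=N(\mathfrak{a})\mathcal{O}_K$ with $N(\mathfrak{a})>0$), and the scalar $-1/x'$ has norm $1/(xx')<0$, contributing the twist $\Theta$. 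Hence $\Phi$ sends a class $\mathscr{C}$ to $\Theta\mathscr{C}^{-1}=(\mathscr{C}^*)^{-1}$, which equals $\mathscr{C}^*$ only when $\mathscr{C}^2=1$ in the narrow class group; orientation-reversal gives class \emph{inversion}, not the $\Theta$-twist alone. This is a real restriction: for $K=\mathbb{Q}(\sqrt{34})$ (where $N(\epsilon)=+1$, so the standing hypotheses hold) the narrow class group is cyclic of order $4$ with $\Theta=\mathscr{C}^2$ for a generator $\mathscr{C}$, so $\Phi$ maps $\mathrm{Red}_w(\mathscr{C})$ back to $\mathrm{Red}_w(\mathscr{C})$ and gives no relation at all with $\mathrm{Red}_w(\mathscr{C}^*)$. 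Moreover, even granting the bijection, your counting chain in (1) is circular: the equality $\#\mathrm{Red}_w(\mathscr{B})=\#\{k:b_k\ge3\}$ is precisely the surjectivity you still owe (your ``exactly when'' proves only the inclusion $\{w_k-1:b_k\ge3\}\subseteq\mathrm{Red}_w(\mathscr{B})$), and inclusions with cardinality bounds all pointing the same way close nothing.

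Both defects are repairable, but by completeness of the reduced cycle rather than by counting. For (1): if $x\in\mathrm{Red}_w(\mathscr{B})$ then $x+1$ satisfies $x+1>1>(x+1)'>0$ and spans the same module, hence $x+1\in\mathrm{Red}(\mathscr{B})=\{w_1,\dots,w_r\}$, so $x=w_k-1$ with $b_k\ge3$; dually, if $X\in\mathrm{Red}_w(\mathscr{B}^*)$ then $w=X/(X-1)$ satisfies $w>1>w'>0$ and $\mathbb{Z}+\mathbb{Z}X=\frac{1}{w-1}\bigl(\mathbb{Z}+\mathbb{Z}w\bigr)$, hence $w=w_k$ and $X=X_k$ with $b_{k-1}\ge3$. (The paper encodes these identifications in the dictionary between the minus-fraction cycle of $\mathscr{B}$ and the plus-fraction cycle $x_1,x_2,\dots$ of the wide class, whose members alternate between $\mathscr{B}$ and $\mathscr{B}^*$ because $\mathbb{Z}+\mathbb{Z}x_i=\frac{1}{x_{i+1}}\bigl(\mathbb{Z}+\mathbb{Z}x_{i+1}\bigr)$ with $N(x_{i+1})<0$.) For (2) your pairing fails outright: the (correct) identity $g\bigl(1/\Phi(x),1/\Phi(x)',0,\alpha\bigr)=g(x,x',0,\alpha)$ converts the $\mathscr{B}^*$-sum into a sum over $\mathrm{Red}_w(\mathscr{B}^{-1})$, not over $\mathrm{Red}_w(\mathscr{B})$, and these differ in general. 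The paper's argument for (2) avoids all of this and is independent of (1): for $g=H_1+H_2$ the $H_2$-parts cancel inside $F$ itself, since the second argument produced by $g(1-1/x,1-1/y,0,\alpha)$ is $\frac{x}{x-1}-\frac{y}{y-1}=\frac{1}{x-1}-\frac{1}{y-1}$, identical to that produced by $g(x-1,y-1,0,\alpha)$; hence $F(x,y,\alpha,\alpha)=H_1(x-y,0,\alpha)-H_1\bigl(\frac{1}{y}-\frac{1}{x},0,\alpha\bigr)$, and the cyclic sum over $\mathrm{Red}(\mathscr{B})$ telescopes to zero because $w_j-w_j'=\frac{1}{w_{j+1}'}-\frac{1}{w_{j+1}}$. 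No class-swapping bijection is needed anywhere.
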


 \medskip

We record a corollary of the above lemma in the next result.
\begin{coro}\label{rc}
\begin{eqnarray*}
\mathscr{I}(\mathscr{B})\pm\mathscr{I}(\mathscr{B}^*)
=\left(\sum_{x\in \mathrm{Red}_w(\mathscr{B})}\pm
\sum_{x\in \mathrm{Red}_w(\mathscr{B}^*)}\right)G^{\mp}(x,x', 0,\alpha)
\end{eqnarray*}
with $\displaystyle G^{\mp}(x, y, 0,\alpha)=g(x,y, 0,\alpha)\mp g\left(\frac{1}{x}, \frac{1}{y},0,\alpha\right).$
\end{coro}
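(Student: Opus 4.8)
The plan is to apply part~(1) of Lemma~\ref{rz} twice, once to the narrow class $\mathscr{B}$ and once to its conjugate class $\mathscr{B}^*$, and then to form the two linked sign combinations. First I would record the identity that the lemma supplies for $\mathscr{B}$,
\begin{align*}
\mathscr{I}(\mathscr{B})=\sum_{x\in \mathrm{Red}_w(\mathscr{B})} g(x,x', 0,\alpha)-\sum_{x\in \mathrm{Red}_w(\mathscr{B}^*)}g\left(\frac{1}{x}, \frac{1}{x'}, 0,\alpha\right).
\end{align*}

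The crucial structural fact is that the $*$-operation is an involution on narrow ideal classes: since $\mathscr{B}^*=\Theta\mathscr{B}$, where $\Theta$ is the narrow class of principal ideals of negative norm, and $\Theta^2$ is the trivial narrow class of principal ideals of positive norm, one has $(\mathscr{B}^*)^*=\mathscr{B}$ and hence $\mathrm{Red}_w((\mathscr{B}^*)^*)=\mathrm{Red}_w(\mathscr{B})$. Applying Lemma~\ref{rz}(1) to $\mathscr{B}^*$ and using this involution therefore gives
\begin{align*}
\mathscr{I}(\mathscr{B}^*)=\sum_{x\in \mathrm{Red}_w(\mathscr{B}^*)} g(x,x', 0,\alpha)-\sum_{x\in \mathrm{Red}_w(\mathscr{B})}g\left(\frac{1}{x}, \frac{1}{x'}, 0,\alpha\right).
\end{align*}

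Next I would add and subtract these two expressions and regroup the four sums by their index set. For the top sign the two pieces indexed by $\mathrm{Red}_w(\mathscr{B})$ combine into $\sum_{x\in\mathrm{Red}_w(\mathscr{B})}\bigl(g(x,x',0,\alpha)-g(\tfrac{1}{x},\tfrac{1}{x'},0,\alpha)\bigr)$, and similarly over $\mathrm{Red}_w(\mathscr{B}^*)$, so that $\mathscr{I}(\mathscr{B})+\mathscr{I}(\mathscr{B}^*)$ collapses to $\bigl(\sum_{\mathrm{Red}_w(\mathscr{B})}+\sum_{\mathrm{Red}_w(\mathscr{B}^*)}\bigr)G^{-}$. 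For the bottom sign the recombination instead produces a plus inside each summand and an overall difference of the two index sums, yielding $\bigl(\sum_{\mathrm{Red}_w(\mathscr{B})}-\sum_{\mathrm{Red}_w(\mathscr{B}^*)}\bigr)G^{+}$. In the compact $\pm/\mp$ notation this is exactly the asserted identity, with $G^{\mp}(x,y,0,\alpha)=g(x,y,0,\alpha)\mp g\left(\frac{1}{x},\frac{1}{y},0,\alpha\right)$.

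I do not anticipate any real difficulty: once the two instances of Lemma~\ref{rz}(1) are in place the computation is purely formal. The one point requiring genuine care is the involution $(\mathscr{B}^*)^*=\mathscr{B}$, since this is what re-indexes the ``$g(\tfrac{1}{x},\tfrac{1}{x'})$'' sum attached to $\mathscr{B}^*$ back over $\mathrm{Red}_w(\mathscr{B})$ and thereby lets all four sums be collapsed onto just the two index sets $\mathrm{Red}_w(\mathscr{B})$ and $\mathrm{Red}_w(\mathscr{B}^*)$; after that it is merely a matter of bookkeeping the linked $\pm$ and $\mp$ signs correctly.
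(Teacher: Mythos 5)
Your proposal is correct and follows essentially the same route as the paper: the paper's (very terse) proof likewise invokes Lemma~\ref{rz}(1) and leaves the symmetric application to $\mathscr{B}^*$ and the sign bookkeeping implicit, which is exactly what you spell out. Your explicit justification of the involution $(\mathscr{B}^*)^*=\mathscr{B}$ via $\Theta^2$ being the trivial narrow class is the one point the paper glosses over, and you handle it correctly.
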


\medskip
 
\begin{theorem}\label{v-z}
For every narrow ideal class $\mathscr{B},$ integer $k\geq2$ and  $\alpha\in\mathbb{Q}\backslash\mathbb{Z}$ such that $N(\epsilon-1)\alpha\in\mathbb{Z}$, we have
  \begin{eqnarray*}\label{tm9 vz eqn} 
&& D^{\frac{k}{2}}\left(\zeta (k, (\alpha,\alpha), \mathscr{B}) +(-1)^k\zeta (k, (\alpha, \alpha), \mathscr{B}^*)\right)\\
&& =\left(\sum_{x\in\mathrm{Red}_w(\mathscr{B})}+(-1)^k\sum_{x\in\mathrm{Red}_w(\mathscr{B}^*)}\right)W_k(x,x', \alpha,\alpha),
\end{eqnarray*}
where 
\begin{align*}
W_k(x,y,\alpha,\alpha)&=\mathscr{D}_{k-1}\left(\mathscr{F}_{2k}\left(|x|;0, \alpha\right)+|x|^{2k-2}\mathscr{F}_{2k}\left(\frac{1}{|x|};0, \alpha\right)-\frac{3}{4}\left(\frac{1}{|x|}+|x|^{2k-1}\right)\right.\\
&\left.\quad\times\left(\zeta(2k)+\mathrm{Li}_{2k}(e^{2\pi i   \alpha})\right)+\sum_{r=0}^{ 2k-1 }\left(|x|^{2r-1}+|x|^{2k-2r-1}\right) \zeta(2k-2r) \mathrm{Li}_{2r}(e^{2\pi i\alpha})\right).
\end{align*}
\end{theorem}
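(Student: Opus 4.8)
The plan is to recognize the left-hand side as a narrow-class invariant of the type axiomatized in Lemma \ref{rz} and then to collapse the cyclic sum by Corollary \ref{rc}. Since $N(\epsilon-1)\alpha\in\mathbb{Z}$ forces $(\alpha,\alpha)\in\mathcal{S}$ while $\alpha\notin\mathbb{Z}$, part (2) of Theorem \ref{real second kronecker limit formula} applies with $\beta=\alpha$ and gives
\begin{align*}
D^{\frac{k}{2}}\zeta(k,(\alpha,\alpha),\mathscr{B})=-\sum_{w\in\mathrm{Red}(\mathscr{B})}(\mathscr{D}_{k-1}\mathscr{F}_{2k})(w,w';\alpha,\alpha).
\end{align*}
I would therefore set $\mathscr{I}(\mathscr{B}):=D^{k/2}\zeta(k,(\alpha,\alpha),\mathscr{B})$, so that $\mathscr{I}(\mathscr{B})=\sum_{w\in\mathrm{Red}(\mathscr{B})}F(w,w',\alpha,\alpha)$ with $F:=-\mathscr{D}_{k-1}\mathscr{F}_{2k}$. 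Everything then hinges on producing a function $g(\,\cdot\,,\cdot\,,0,\alpha)$ for which $F$ has the coboundary shape $F(x,y,\alpha,\alpha)=g(x-1,y-1,0,\alpha)-g\!\left(1-\tfrac1x,1-\tfrac1y,0,\alpha\right)$ demanded by Lemma \ref{rz}.

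The heart of the argument is the construction of $g$, and this is where I expect the main difficulty. The relevant algebraic input is that $T^{-1}=\sm 1&-1\\0&1\esm$ sends the parameter pair $(\alpha,\alpha)$ to $(0,\alpha)$ while shifting the argument by $-1$; this is exactly the discrepancy between $\mathscr{F}_{2k}(\cdot;\alpha,\alpha)$ and the arguments of $g$. Concretely, I would specialize the three-term functional equation \eqref{three term} (for $\mathscr{F}_{2k}$, with parameter pair $(0,\alpha)$ and the substitution $x\mapsto x-1$): its middle term is precisely $\mathscr{F}_{2k}(x;\alpha,\alpha)$, its first term is $\mathscr{F}_{2k}(x-1;0,\alpha)$, and its third term is $(x-1)^{2k-2}\mathscr{F}_{2k}\!\left(\tfrac{x}{x-1};\alpha,0\right)$. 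Applying the two-term functional equation \eqref{two term} re-expresses this last term through $\mathscr{F}_{2k}\!\left(1-\tfrac1x;0,\alpha\right)$ up to elementary terms, producing an identity of the form
\begin{align*}
\mathscr{F}_{2k}(x;\alpha,\alpha)=\mathscr{F}_{2k}(x-1;0,\alpha)-x^{2k-2}\mathscr{F}_{2k}\!\left(1-\tfrac1x;0,\alpha\right)+C(x),
\end{align*}
where $C(x)$ collects the polynomial and polylogarithmic remainders from both functional equations. I would then apply $-\mathscr{D}_{k-1}$: its translation covariance turns the first term into $g(x-1,y-1,0,\alpha)$, and the weight-$(2k-2)$ covariance recorded in \cite[Proposition 7]{VZ} turns the weighted term into $g\!\left(1-\tfrac1x,1-\tfrac1y,0,\alpha\right)$, where $g:=-\mathscr{D}_{k-1}\mathscr{F}_{2k}(\cdot;0,\alpha)$ augmented by the $\mathscr{D}_{k-1}$-images of the surviving part of $C$. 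The crucial simplification is that $\mathscr{D}_{k-1}$ depends only on the $(2k-1)$st derivative of its argument (the integral formula at the close of Section \ref{appl to klf}), so every polynomial summand of $C$ of degree $\le 2k-2$ is annihilated and only a short explicit list of corrections survives. The delicate bookkeeping of these surviving terms, together with the signs and the $(-x)^{r-1}$ and double-polylogarithm coefficients in \eqref{three term}, is the main obstacle.

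With $g$ constructed, I would invoke Corollary \ref{rc} with the sign choice $\pm=(-1)^k$, which yields
\begin{align*}
\mathscr{I}(\mathscr{B})+(-1)^k\mathscr{I}(\mathscr{B}^*)=\left(\sum_{x\in\mathrm{Red}_w(\mathscr{B})}+(-1)^k\sum_{x\in\mathrm{Red}_w(\mathscr{B}^*)}\right)G^{\mp}(x,x',0,\alpha),
\end{align*}
with $G^{\mp}(x,y,0,\alpha)=g(x,y,0,\alpha)-(-1)^kg(1/x,1/y,0,\alpha)$. Since $\mathscr{I}(\mathscr{B})=D^{k/2}\zeta(k,(\alpha,\alpha),\mathscr{B})$, the left-hand side is exactly the combination in the theorem, so it remains to identify $G^{\mp}$ with $W_k$. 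For this I would again use the weight-$(2k-2)$ inversion covariance of $\mathscr{D}_{k-1}$ to rewrite $g(1/x,1/y,0,\alpha)$ through $\mathscr{D}_{k-1}$ applied to $z\mapsto |z|^{2k-2}\mathscr{F}_{2k}(1/z;0,\alpha)$, the absolute values absorbing the sign introduced by the negative-norm class $\Theta$ in $\mathscr{B}^*=\Theta\mathscr{B}$. Then $G^{\mp}$ becomes $\mathscr{D}_{k-1}$ of $\mathscr{F}_{2k}(|z|;0,\alpha)+|z|^{2k-2}\mathscr{F}_{2k}(1/|z|;0,\alpha)$, while the elementary corrections carried by $g$, specialized through $\mathrm{Li}_j(e^{2\pi i\cdot 0})=\zeta(j)$, reassemble — after the parity selection forced by the factor $(-1)^k$ — into precisely the term $-\tfrac34\bigl(\tfrac1{|x|}+|x|^{2k-1}\bigr)\bigl(\zeta(2k)+\mathrm{Li}_{2k}(e^{2\pi i\alpha})\bigr)$ and the sum $\sum_{r}\bigl(|x|^{2r-1}+|x|^{2k-2r-1}\bigr)\zeta(2k-2r)\mathrm{Li}_{2r}(e^{2\pi i\alpha})$. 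This gives $G^{\mp}(x,x',0,\alpha)=W_k(x,x',\alpha,\alpha)$ and completes the proof; matching the overall sign, the constants, and the parity-dependent split between $\mathscr{B}$ and $\mathscr{B}^*$ is the final and most error-prone computation.
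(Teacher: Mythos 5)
Your skeleton coincides with the paper's: both arguments express $D^{k/2}\zeta(k,(\alpha,\alpha),\mathscr{B})$ through the higher limit formula (Theorem \ref{main}), combine the shifted three-term equation \eqref{three term} with parameters $(0,\alpha)$ and the two-term equation \eqref{two term} to produce the coboundary shape demanded by Lemma \ref{rz}, use that $\mathscr{D}_{k-1}$ annihilates polynomials of degree $\le 2k-2$, and conclude with Corollary \ref{rc}. So the route is the right one; the problem lies in the execution of the step you yourself call the heart of the argument.

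The gap is concrete: your $g$ is built out of $\mathscr{F}_{2k}(\,\cdot\,;0,\alpha)$, which is defined only on $\mathbb{C}'=\mathbb{C}\backslash(-\infty,0]$, but Lemma \ref{rz} forces $g$ to be evaluated on the negative real axis. The coboundary hypothesis is used at pairs $(x,y)=(w,w')$ with $w>1>w'>0$, where the second arguments $w'-1\in(-1,0)$ and $1-\frac{1}{w'}<0$ are negative; and the conclusion of the lemma (hence Corollary \ref{rc}) evaluates $g(x,x',0,\alpha)$ at wide-reduced $x$ with $x'\in(-1,0)$. At all of these points your $g$ is undefined, and even the functional-equation identity from which you want to derive the coboundary relation cannot be stated there: the three-term equation under $x\mapsto x-1$ requires $x-1\in\mathbb{C}'$, which fails for $x=w'$. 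The paper's proof spends essentially all of its effort on exactly this point: it first builds $V_0$ from the two-term equation (equation \eqref{v0}), then extends to $\mathbb{R}\backslash\{0\}$ by setting $V(x):=V_0(|x|;\alpha,\beta)+\frac{3}{4x}\mathrm{Li}_{2k}(e^{2\pi i\alpha})+\frac{3}{4}x^{2k-1}\mathrm{Li}_{2k}(e^{2\pi i\beta})$, where the $\frac14$/$\frac34$ split is engineered so that, for $x>0$, $V$ agrees with $\mathscr{F}_{2k}(x;\alpha,\beta)$ up to a polynomial of degree $\le 2k-2$ (killed by $\mathscr{D}_{k-1}$), while the correction terms have $\mathscr{D}_{k-1}$-images depending only on $x-y$ and $\frac1x-\frac1y$ (via $\mathscr{D}_{k-1}(x^{2k-1})=(x-y)^k$ and $\mathscr{D}_{k-1}(1/x)=-\bigl(\frac1y-\frac1x\bigr)^k$), hence are discarded by Lemma \ref{rz}(2). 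This extension to negative arguments — not, as you suggest, a sign coming from the negative-norm class $\Theta$ — is the actual source of the absolute values and of the term $-\frac34\bigl(\frac{1}{|x|}+|x|^{2k-1}\bigr)\bigl(\zeta(2k)+\mathrm{Li}_{2k}(e^{2\pi i\alpha})\bigr)$ in $W_k$. Without supplying this extension, and verifying that the coboundary identity holds for the extended $V$, your argument does not go through as written.
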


\begin{remark}
Note that Theorem \ref{v-z} involves combinations of $\zeta(k,(\alpha,\alpha),\mathscr{B})$. However, the ideal scenario would be to have a result that contains combinations of $\zeta(k,(\alpha,\beta),\mathscr{B})$, without $\mathscr{F}_k$ appearing in the definition of $W_k$. Unfortunately, we have not been able to achieve this yet. In this regard, we conjecture the following:
\begin{conjecture}
The expression $\zeta(k,(\alpha,\beta),\mathscr{B})+(-1)^k\zeta(k,(\beta,\alpha),\mathscr{B}^*)$ is a linear combination of polylogarithms of the form $\mathrm{Li}_{2r}\left(e^{2\pi i\alpha}\right)$ and $\mathrm{Li}_{2r}\left(e^{2\pi i\beta}\right), r\in\mathbb{N}.$
\end{conjecture}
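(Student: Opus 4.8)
The plan is to reduce the conjectured identity to the two-term functional equation \eqref{two term} by combining the inversion-equivariance of the operator $\mathscr{D}_{k-1}$ with Zagier's correspondence between the reduced numbers of $\mathscr{B}$ and those of $\mathscr{B}^{*}=\Theta\mathscr{B}$. First I would write both zeta values through Theorem \ref{real second kronecker limit formula}(2), giving $D^{k/2}\zeta(k,(\alpha,\beta),\mathscr{B})=-\sum_{w\in\mathrm{Red}(\mathscr{B})}(\mathscr{D}_{k-1}\mathscr{F}_{2k})(w,w';\alpha,\beta)$ and the analogous expression for $\zeta(k,(\beta,\alpha),\mathscr{B}^{*})$. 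The guiding observation is that the parameter swap $(\alpha,\beta)\mapsto(\beta,\alpha)$ and the class swap $\mathscr{B}\mapsto\mathscr{B}^{*}$ built into the conjecture are precisely the two operations effected by \eqref{two term}, namely the interchange of the $\mathrm{Li}$-arguments and the inversion $x\mapsto 1/x$; this strongly suggests that the whole combination collapses onto the right-hand side of \eqref{two term}.

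The first technical step is an inversion identity for $\mathscr{D}_{n}$. Starting from the integral representation $(\mathscr{D}_{n}f)(x,y;\alpha,\beta)=\frac{1}{(n!)^{2}}\int_{y}^{x}\bigl(\tfrac{(x-t)(t-y)}{x-y}\bigr)^{n}f^{(2n+1)}(t;\alpha,\beta)\,dt$ recorded at the close of Section \ref{appl to klf}, the substitution $t\mapsto 1/t$ together with the Bol-type identity $\frac{d^{2n+1}}{du^{2n+1}}\bigl[u^{2n}f(1/u)\bigr]=-u^{-2n-2}f^{(2n+1)}(1/u)$ yields $(\mathscr{D}_{n}f)(1/y,1/x)=-(\mathscr{D}_{n}g)(x,y)$, where $g(u)=u^{2n}f(1/u)$. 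Applying this with $n=k-1$ and $f=\mathscr{F}_{2k}(\cdot;\beta,\alpha)$, and using the bijection $w\mapsto 1/w'$ (with conjugate $1/w$) between $\mathrm{Red}(\mathscr{B})$ and $\mathrm{Red}(\mathscr{B}^{*})$, the $\mathscr{B}^{*}$-sum becomes a sum over $\mathrm{Red}(\mathscr{B})$ of $\mathscr{D}_{k-1}$ applied to $u^{2k-2}\mathscr{F}_{2k}(1/u;\beta,\alpha)$. Collecting terms, each summand becomes $\mathscr{D}_{k-1}$ applied to $\mathscr{F}_{2k}(u;\alpha,\beta)-(-1)^{k}u^{2k-2}\mathscr{F}_{2k}(1/u;\beta,\alpha)$, which for odd $k$ is exactly the left-hand side of \eqref{two term} with $k$ replaced by $2k$ (note $(-u)^{2k-2}=u^{2k-2}$).

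The decisive step is then to insert the right-hand side of \eqref{two term}. Because $(\mathscr{D}_{k-1}h)(x,y)$ depends only on $h^{(2k-1)}$ (equivalently, through the integral representation in \eqref{doperator}), every monomial $u^{r-1}$ with $1\le r\le 2k-1$ occurring in $\sum_{r}(-u)^{r-1}\mathrm{Li}_{2k-r}(e^{2\pi i\alpha})\mathrm{Li}_{r}(e^{2\pi i\beta})$ has degree $\le 2k-2$ and is annihilated by $\mathscr{D}_{k-1}$; thus the entire bilinear block of products of polylogarithms drops out. Only the two extreme terms $\frac{1}{u}\mathrm{Li}_{2k}(e^{2\pi i\alpha})$ and $-(-u)^{2k-1}\mathrm{Li}_{2k}(e^{2\pi i\beta})$ survive, each carrying a single even-index polylogarithm, and summing the explicit coefficients $\mathscr{D}_{k-1}[u^{-1}](w,w')$ and $\mathscr{D}_{k-1}[u^{2k-1}](w,w')$ over $\mathrm{Red}(\mathscr{B})$ produces exactly the predicted linear combination of $\mathrm{Li}_{2k}(e^{2\pi i\alpha})$ and $\mathrm{Li}_{2k}(e^{2\pi i\beta})$.

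The main obstacle, and the reason this is recorded only as a conjecture, is matching signs and boundary data uniformly in $k$. The inversion identity contributes the factor $-(-1)^{k}$, so the clean two-term combination (the sum, whose product-polylog block is killed by $\mathscr{D}_{k-1}$) arises only for one parity of $k$; for the other parity one is left with the difference $\mathscr{F}_{2k}(u;\alpha,\beta)-u^{2k-2}\mathscr{F}_{2k}(1/u;\beta,\alpha)$, for which no functional equation is available and the $\mathscr{F}_{2k}$ terms do not visibly cancel. Moreover, the precise $\mathrm{Red}(\mathscr{B})\leftrightarrow\mathrm{Red}(\mathscr{B}^{*})$ correspondence used in Lemma \ref{rz} and Theorem \ref{v-z} carries $\mathrm{Red}_{w}$/absolute-value conventions and adjacency corrections (the analogues of the $\zeta(2k-2r)\mathrm{Li}_{2r}$ terms in $W_{k}$), and showing that these corrections reassemble into single even-index polylogarithms, rather than into the genuine products $\mathrm{Li}_{a}(e^{2\pi i\alpha})\mathrm{Li}_{b}(e^{2\pi i\beta})$ that one would naively expect, is precisely the point at which the argument currently stalls.
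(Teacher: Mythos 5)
You should first note that this statement has no proof in the paper to compare against: it is stated as an open conjecture, and the remark immediately preceding it says explicitly that the authors ``have not been able to achieve this yet.'' So your attempt can only be judged on its own terms, and while you are candid that it stalls on the parity of $k$, the decisive flaw occurs earlier and is arithmetic rather than analytic. Your claimed bijection $w\mapsto 1/w'$ from $\mathrm{Red}(\mathscr{B})$ to $\mathrm{Red}(\mathscr{B}^*)$ does not exist. No bijection can: the cycle lengths $\ell(\mathscr{B})$ and $\ell(\mathscr{B}^*)$ generally differ, as the paper's own example in Section \ref{num} shows --- for $K=\mathbb{Q}(\sqrt{3})$ one has $\mathrm{Red}(\mathscr{B}_0)=\{2+\sqrt{3}\}$ with one element, while $\mathrm{Red}(\mathscr{B}_0^*)=\mathrm{Red}(\mathscr{B}_1)=\{1+\tfrac{1}{\sqrt{3}},\tfrac{3+\sqrt{3}}{2}\}$ has two. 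Moreover, for $w=2+\sqrt{3}$ one computes $1/w'=w$: the map $w\mapsto 1/w'$ reverses the minus-continued-fraction cycle $((b_1,\dots,b_r))\mapsto((b_r,\dots,b_1))$ and lands in the conjugate/inverse narrow class (the ideal $\mathbb{Z}\tfrac{1}{w'}+\mathbb{Z}=\tfrac{1}{w'}(\mathbb{Z}+\mathbb{Z}w')$ differs from the conjugate of $\mathbb{Z}w+\mathbb{Z}$ by a totally positive multiplier), never in $\Theta\mathscr{B}$. A useful sanity check confirms the gap: your analytic steps are actually sound --- the inversion identity $(\mathscr{D}_n f)(1/y,1/x)=-(\mathscr{D}_n g)(x,y)$ with $g(u)=u^{2n}f(1/u)$ follows correctly from the integral representation and the Bol-type identity, and $\mathscr{D}_{k-1}$ does annihilate the degree-$\le 2k-2$ product block in \eqref{two term} --- so if the bijection held, your computation would \emph{prove} the conjecture outright for odd $k$, with explicit coefficients $\bigl(\tfrac{1}{w'}-\tfrac{1}{w}\bigr)^k$ and $(w-w')^k$. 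That conclusion is too strong to be compatible with the authors' own statement that the result is out of reach, which is itself a signal that the reduction-theoretic input is where the argument breaks.

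The correct passage between $\mathscr{B}$ and $\mathscr{B}^*$ is the one the paper actually uses in Lemma \ref{rz} and Theorem \ref{v-z}: one must pass to wide-sense reduced numbers $x>1>0>x'>-1$ and the plus-continued-fraction cycle, in which the terms $x_1,x_3,\dots$ belong to $\mathscr{B}$ and $x_2,x_4,\dots$ to $\mathscr{B}^*$, and then telescope via the decomposition $F(x,y,\alpha,\alpha)=g(x-1,y-1,0,\alpha)-g\bigl(1-\tfrac{1}{x},1-\tfrac{1}{y},0,\alpha\bigr)$. That mechanism is what forces the restriction $\alpha=\beta$ in Theorem \ref{v-z} (the shifts $x\mapsto x-1$ and $x\mapsto 1-\tfrac{1}{x}$ move the twist parameters via the three-term equation \eqref{three term}, and the cycle only closes when the two twists coincide), and even then it leaves the combination $\mathscr{F}_{2k}(|x|;0,\alpha)+|x|^{2k-2}\mathscr{F}_{2k}\bigl(\tfrac{1}{|x|};0,\alpha\bigr)$ inside $W_k$ rather than pure polylogarithms --- here the two-term equation \eqref{two term} cannot be applied to collapse it, since it is stated for $\alpha,\beta\in\mathbb{R}\backslash\mathbb{Z}$ and the first twist has degenerated to $0$. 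So the obstruction you locate at the parity of $k$ is real but secondary; the primary missing idea is a $\mathrm{Red}(\mathscr{B})\leftrightarrow\mathrm{Red}(\mathscr{B}^*)$ mechanism compatible with the twists $(\alpha,\beta)\mapsto(\beta,\alpha)$, and no such mechanism is currently known --- which is precisely why the statement is a conjecture.
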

In the special case when $\alpha$ and $\beta$ are intergers, it is known from Zagier's work \cite[Theorem 9]{VZ} that $\zeta(k,\mathscr{B})+(-1)^k\zeta(k,\mathscr{B}^*)$ can be expressed as a linear combination of the values $\zeta(2r),\ r\in\mathbb{N}$.
\end{remark}



\section{{Proofs}}
\subsection{Proof of asymptotics}\label{asymp}

\begin{proof}[\textbf{Proof of Proposition \textup{\ref{special cases}}}]
The first part of the proposition follows easily from \cite[Proposition 2.4]{CK} and observing $\mathscr{F}(x;e^{2\pi i\alpha},e^{2\pi i\beta})=-\mathscr{F}_2(x;\alpha,\beta)+\frac{1}{x}\mathrm{Li}_2\left(e^{2\pi i\alpha}\right).$

We now prove the second part. Utilizing the representation of $\mathscr{F}_k(x;\alpha,\beta)$ in \eqref{hf} and the definition of the polylogarithm, we see that
\begin{align*}
&-\mathscr{F}_k(x;\alpha,\beta)-\left(\gamma+\log\left(1-e^{2\pi i\beta}\right)\right)\mathrm{Li}_{k-1}\left(e^{2\pi i\alpha}\right)\\
&=\sum_{p\geq1}\frac{e^{2\pi i\alpha p}}{p^{k-1}}\left\{\sum_{q\geq0}\left(-\frac{e^{2\pi i\beta q}}{px+q}\right)-\gamma-\log\left(1-e^{2\pi i\beta}\right)\right\}\\
&=\sum_{p\geq1}\frac{e^{2\pi i\alpha p}}{p^{k-1}}\left\{\sum_{q\geq0}\left(\frac{e^{2\pi i\beta (q+1)}}{1+q}-\frac{e^{2\pi i\beta q}}{px+q}\right)-\gamma\right\}.
\end{align*}
We obtain the result by taking the limit $\alpha,\beta\to0$ on both sides of the above equation, and using $\psi(x)=-\gamma+\sum_{q\geq0}\left(\frac{1}{1+q}-\frac{1}{px+q}\right)$ as well as \eqref{hk}.
\end{proof}

\begin{proof}[\textbf{Proof of Proposition \textup{\ref{ay}}}] 
Note that
\begin{align}
\frac{1}{1-e^{-t}e^{2\pi i \beta}}=\sum_{n=0}^\infty \frac{a_n(\beta)}{n!}t^n,\nonumber
\end{align}
as $t\to0$. Then by Watson's lemma \cite[pp.~32--33, Theorem 2.4]{temme}, we have
\begin{align}
\int_0^\infty\frac{e^{-xt}}{1-e^{-t}e^{2\pi i \beta}}dt
\sim\sum_{n=0}^\infty\Gamma(n+1)\frac{a_n(\beta)}{n!}\frac{1}{x^{n+1}},\nonumber
\end{align}
as $x\to\infty$ in $|\arg(x)|\leq\pi-\delta$, $\delta>0$\footnote{Note that \cite[p.~33, Equation (2.7)]{temme} is valid for bigger region of $z$ through the argument given below of this equation. This is what we are using here.}. Using the definition of $\psi_\beta(x)$ from \eqref{poly}, we prove \eqref{asymptotic of psi_beta}.

\noindent
Invoking \eqref{asymptotic of psi_beta} in \eqref{higher herglotz}, we obtain, as $x\to\infty$,
\begin{align}
\mathscr{F}_{k}(x;\alpha,\beta)&\sim\sum_{p=1}^\infty\frac{e^{2\pi i p\alpha}}{p^{k-1}}\sum_{n=0}^\infty\frac{a_n(\beta)}{(px)^{n+1}}
 =\sum_{n=0}^\infty\frac{a_n(\beta)}{x^{n+1}}\sum_{p=1}^\infty\frac{e^{2\pi i p\alpha}}{p^{n+k}}.\nonumber
\end{align}
\\
\noindent Now we use the definition of polylogarithm function to arrive at \eqref{asymp of hnf large x}.

We next prove \eqref{asymp of hnf small x}. From \eqref{asymp of hnf large x}, as $x\to0$, it follows that
\begin{align}\label{cb}
(-x)^{k-2}\mathscr{F}_{k}\left(\frac{1}{x}; \alpha,\beta \right)\sim(-1)^{k-2}\sum_{n=0}^\infty a_n(\beta)\mathrm{Li}_{k+n}\left(e^{2\pi i\alpha}\right)x^{k+n-1}.
\end{align}
From the two-term functional equation \eqref{two term}, as $x\to0$, we have
\begin{align*}
 \mathscr{F}_{k}\left(x; \alpha,\beta \right)\sim&\frac{1}{x}\mathrm{Li}_{k}\left(e^{2\pi i\alpha}\right)-\frac{(-x)^{k-1}}{x+1}\mathrm{Li}_{k-r}\left(e^{2\pi i(\alpha+\beta)}\right)-(-x)^{k-2}\mathscr{F}_{k}\left(\frac{1}{x}; \beta, \alpha \right)\\
 &\quad+\sum_{r=1}^{k-1}(-x)^{r-1}\mathrm{Li}_{k-r}\left(e^{2\pi i\alpha}\right)\mathrm{Li}_{r}\left(e^{2\pi i\beta}\right).
\end{align*}
Equation \eqref{asymp of hnf small x} now follows upon employing \eqref{cb} in the above expression. 
\end{proof}

\medskip

\subsection{Proof of the two, three and six-term functional equations}

\begin{proof}[\textbf{Proof of Theorem \textup{\ref{fe thm}}}]
Let us introduce the notation
\begin{align}\label{F}
F_{m,n}(x,\alpha,\beta ):=(-x)^n\int_0^\infty\mathrm{Li}_m\left(e^{-xt}e^{2\pi i\alpha}\right)\mathrm{Li}_n\left(e^{-t}e^{2\pi i\beta}\right)dt,\quad(m,n\geq1).
\end{align}
Using the fact $\frac{d}{dt}\mathrm{Li}_{n+1}\left(e^{-t}e^{2\pi i\beta}\right)=-\mathrm{Li}_n\left(e^{-t}e^{2\pi i\beta}\right)$ in the above equation, we get
\begin{align}
F_{m+1,n}(x,\alpha,\beta )&=-(-x)^n\int_0^\infty\mathrm{Li}_{m+1}\left(e^{-xt}e^{2\pi i\alpha}\right)d\mathrm{Li}_{n+1}\left(e^{-t}e^{2\pi i\beta}\right)dt\nonumber\\
&=-(-x)^n\left(\sum_{p=1}^\infty\frac{e^{-pxt}e^{2\pi i\alpha p}}{p^{m+1}}\sum_{q=1}^\infty\frac{e^{-qt}e^{2\pi i\beta q}}{q^{n+1}}\right)\Bigg|_0^\infty\nonumber\\
&\quad+(-x)^{n+1}\int_0^\infty\mathrm{Li}_m\left(e^{-xt}e^{2\pi i\alpha}\right)\mathrm{Li}_{n+1}\left(e^{-t}e^{2\pi i\beta}\right)dt\nonumber\\
&=(-x)^n\mathrm{Li}_{m+1}\left(e^{2\pi i\alpha}\right)\mathrm{Li}_{n+1}\left(e^{2\pi i\beta}\right)\nonumber\\
&\quad+(-x)^{n+1}\int_0^\infty\mathrm{Li}_m\left(e^{-xt}e^{2\pi i\alpha}\right)\mathrm{Li}_{n+1}\left(e^{-t}e^{2\pi i\beta}\right)dt.\nonumber
\end{align}
This yields
\begin{align}\label{useful}
F_{m,n+1}(x,\alpha,\beta)&=F_{m+1,n}(x,\alpha,\beta)-(-x)^n\mathrm{Li}_{m+1}\left(e^{2\pi i\alpha}\right)\mathrm{Li}_{n+1}\left(e^{2\pi i\beta}\right).
\end{align}
Replacing $n$ by $n-1$ in the above equation, we obtain
\begin{align}
F_{m,n}(x,\alpha,\beta )&=F_{m+1,n-1}(x,\alpha,\beta )-(-x)^{n-1}\mathrm{Li}_{m+1}\left(e^{2\pi i\alpha}\right)\mathrm{Li}_{n}\left(e^{2\pi i\beta}\right).\nonumber
\end{align}
Recurring the above recurrence relation $n-1$ times, we arrive at
\begin{align}\label{F rec rel}
F_{m,n}(x,\alpha,\beta )&=F_{m+n-1,1}(x,\alpha,\beta)-\sum_{r=2}^n(-x)^{r-1}\mathrm{Li}_{m+n+1-r}\left(e^{2\pi i\alpha}\right)\mathrm{Li}_{r}\left(e^{2\pi i\beta}\right).
\end{align}
We make the change of variable $t\to t/x$ in \eqref{F} so that
\begin{align}\label{fe for capital F}
F_{m,n}(x,\alpha,\beta )+(-x)^{m+n-1}F_{m,n}\left(\frac{1}{x},\beta,\alpha  \right)=0.
\end{align}
We invoke \eqref{F rec rel} twice in \eqref{fe for capital F} and simplify to obtain
\begin{align}
&F_{m+n-1,1}(x,\alpha,\beta )+(-x)^{m+n-1}F_{m+n-1,1}\left(\frac{1}{x}. \beta,\alpha \right)\nonumber\\
&=\sum_{r=2}^n(-x)^{r-1}\mathrm{Li}_{m+n+1-r}\left(e^{2\pi i\alpha}\right)\mathrm{Li}_{r}\left(e^{2\pi i\beta}\right)+\sum_{r=2}^m(-x)^{m+n-r}\mathrm{Li}_{m+n+1-r}\left(e^{2\pi i\beta}\right)\mathrm{Li}_{r}\left(e^{2\pi i\alpha}\right)\nonumber\\
&=\sum_{r=2}^n(-x)^{r-1}\mathrm{Li}_{m+n+1-r}\left(e^{2\pi i\alpha}\right)\mathrm{Li}_{r}\left(e^{2\pi i\beta}\right)+\sum_{r=n+1}^{m+n-1}(-x)^{r-1}\mathrm{Li}_{r}\left(e^{2\pi i\beta}\right)\mathrm{Li}_{m+n+1-r}\left(e^{2\pi i\alpha}\right),\nonumber
\end{align}
where we replaced $r$ by $m+n+1-r$ in the second finite sum. Note that the finite sums on the right-hand side of the above equation can be combined together so that 
\begin{align}\label{xzl}
&F_{m+n-1,1}(x,\alpha,\beta )+(-x)^{m+n-1}F_{m+n-1,1}\left(\frac{1}{x}, \beta,\alpha \right)\nonumber\\
&=\sum_{r=2}^{m+n-1}(-x)^{r-1}\mathrm{Li}_{m+n+1-r}\left(e^{2\pi i\alpha}\right)\mathrm{Li}_{r}\left(e^{2\pi i\beta}\right).
\end{align}
Let $k=m+n+1$ and define, for $k\geq3$, 
\begin{align}\label{defn Fk}
F_k(x,\alpha,\beta ):=F_{k-2,1}(x,\alpha,\beta ).
\end{align}
Hence, from \eqref{xzl}, we get
\begin{align}\label{fe for F_k}
F_k(x,\alpha,\beta )+(-x)^{k-2}F_k\left(\frac{1}{x}, \beta,\alpha \right)&=\sum_{r=2}^{k-2}(-x)^{r-1}\mathrm{Li}_{k-r}\left(e^{2\pi i\alpha}\right)\mathrm{Li}_{r}\left(e^{2\pi i\beta}\right).
\end{align}
Now, we let $m=k-2$ and $n=1$ in \eqref{F} and use \eqref{defn Fk} to conclude that
\begin{align}\label{hh as a special case of double hh}
F_{k}(x,\alpha,\beta )&=-x\int_0^\infty\mathrm{Li}_{k-2}\left(e^{-xt}e^{2\pi i\alpha}\right)\mathrm{Li}_1\left(e^{-t}e^{2\pi i\beta}\right)dt\nonumber\\
&=x\sum_{p=1}^\infty\frac{e^{2\pi i\alpha}}{p^{k-2}}\int_0^\infty e^{-pxt}\log\left(1-e^{-t}e^{2\pi i\beta}\right)dt\nonumber\\
&=x\sum_{p=1}^\infty\frac{e^{2\pi i\alpha}}{p^{k-2}}\left\{\frac{e^{-pxt}}{-px}\log\left(1-e^{-t}e^{2\pi i\beta}\right)\Bigg|_{0}^\infty-\int_0^\infty \frac{e^{-pxt}}{-px}\frac{e^{-t}e^{2\pi i\beta}}{\left(1-e^{-t}e^{2\pi i\beta}\right)}dt\right\}\nonumber\\
&=\sum_{p=1}^\infty\frac{e^{2\pi i\alpha}}{p^{k-1}}\left\{\log\left(1-e^{2\pi i\beta}\right)+\int_0^\infty\frac{ e^{-pxt}}{e^{t}e^{-2\pi i\beta}-1}dt\right\}\nonumber\\
&=\log\left(1-e^{2\pi  i\beta}\right)\mathrm{Li}_{k-1}\left(e^{2\pi i\alpha}\right)+\sum_{p=1}^\infty\frac{e^{2\pi i\alpha}}{p^{k-1}}\left\{\int_0^\infty\frac{ e^{-pxt}dt}{1-e^{-t}e^{2\pi i\beta}}-\int_0^\infty e^{-pxt}dt\right\}\nonumber\\
&=\log\left(1-e^{2\pi  i\beta}\right)\mathrm{Li}_{k-1}\left(e^{2\pi i\alpha}\right)+\mathscr{F}_{k}(x; \alpha,\beta )-\frac{1}{x}\mathrm{Li}_k\left(e^{2\pi i\alpha}\right),
\end{align}
where we used the definition of $\mathscr{F}_{k}(x;\alpha,\beta )$ from \eqref{int repres for hh}.
Substitute the values from \eqref{hh as a special case of double hh} in \eqref{fe for F_k} to obtain
\begin{align}
&\mathscr{F}_{k}(x;\alpha,\beta )+(-x)^{k-2}\mathscr{F}_{k}\left(\frac{1}{x};\beta,\alpha\right) \nonumber\\
&=\frac{1}{x}\mathrm{Li}_k\left(e^{2\pi i\alpha}\right)+(-x)^{k-2}x\mathrm{Li}_k\left(e^{2\pi i\beta}\right)-\log\left(1-e^{2\pi  i\beta}\right)\mathrm{Li}_{k-1}\left(e^{2\pi i\alpha}\right)\nonumber\\
&\quad-(-x)^{k-2}\log\left(1-e^{2\pi  i\alpha}\right)\mathrm{Li}_{k-1}\left(e^{2\pi i\beta}\right)+\sum_{r=2}^{k-2}(-x)^{r-1}\mathrm{Li}_{k-r}\left(e^{2\pi i\alpha}\right)\mathrm{Li}_{r}\left(e^{2\pi i\beta}\right).\nonumber
\end{align}
The two-term functional equation \eqref{two term} now follows by observing the fact that the third and fourth terms of the right-hand side are $r=1$ and $r=k-1$ terms of the finite sum, respectively.

 \medskip

We next prove the three-term \eqref{three term} and six-term functional equation \eqref{six term}.

Let 
\begin{align}\label{G}
G_{m,n}(x,\alpha,\beta ):=(-x)^n\int_0^\infty\mathrm{Li}_{m,n}\left(e^{-xt}e^{2\pi i\alpha},e^{-t}e^{2\pi i\beta}\right)dt,
\end{align}
where $\mathrm{Li}_{m,n}(x,y)$ is defined in \eqref{double polylog}.
It is easy to find that
\begin{align*}
\frac{d}{dt}\mathrm{Li}_{m+1,n+1}\left(e^{-xt}e^{2\pi i\alpha},e^{-t}e^{2\pi i\beta}\right)&=-x\mathrm{Li}_{m,n+1}\left(e^{-xt}e^{2\pi i\alpha},e^{-t}e^{2\pi i\beta}\right)\\
&\quad-\mathrm{Li}_{m+1,n}\left(e^{-xt}e^{2\pi i\alpha},e^{-t}e^{2\pi i\beta}\right).
\end{align*}
This implies the following relation
\begin{align}\label{derivative}
\mathrm{Li}_{m+1,n}\left(e^{-xt}e^{2\pi i\alpha},e^{-t}e^{2\pi i\beta}\right)&=-x\mathrm{Li}_{m,n+1}\left(e^{-xt}e^{2\pi i\alpha},e^{-t}e^{2\pi i\beta}\right)\nonumber\\
&\quad-\frac{d}{dt}\mathrm{Li}_{m+1,n+1}\left(e^{-xt}e^{2\pi i\alpha},e^{-t}e^{2\pi i\beta}\right).
\end{align}
It follows from \eqref{G} and \eqref{derivative} that
\begin{align}\label{original reccur relation}
G_{m+1,n}(x,\alpha,\beta )&=(-x)^{n+1}\int_0^\infty\mathrm{Li}_{m,n+1}\left(e^{-xt}e^{2\pi i\alpha},e^{-t}e^{2\pi i\beta}\right)dt\nonumber\\
&\qquad-(-x)^n\int_0^\infty\frac{d}{dt}\mathrm{Li}_{m+1,n+1}\left(e^{-xt}e^{2\pi i\alpha},e^{-t}e^{2\pi i\beta}\right)dt\nonumber\\
&=G_{m,n+1}(x,\alpha,\beta )+(-x)^n\mathrm{Li}_{m+1,n+1}(e^{2\pi i \alpha},e^{2\pi i \beta}).
\end{align}
The above equation gives
\begin{align}
G_{m,n+1}(x, \alpha,\beta )=G_{m+1,n}(x, \alpha,\beta )-(-x)^n 
\mathrm{Li}_{m+1,n+1}(e^{2\pi i \alpha},e^{2\pi i \beta}),\nonumber
\end{align}
which, upon replacing $n$ by $n-1$, immediately yields
\begin{align}
G_{m,n}(x,\alpha,\beta )=G_{m+1,n-1}(x, \alpha,\beta )-(-x)^{n-1}\mathrm{Li}_{m+1,n}(e^{2\pi i \alpha},e^{2\pi i \beta}).\nonumber
\end{align}
Recurring the above relation $n-1$ times, we obtain
\begin{align}\label{rec for G}
G_{m,n}(x, \alpha,\beta )=G_{m+n-1,1}(x, \alpha,\beta )-\sum_{r=2}^{n}(-x)^{r-1}\mathrm{Li}_{m+n+1-r,r}(e^{2\pi i\alpha}, e^{2\pi i \beta} ).
\end{align}
Using \eqref{F} and the definition of the polylogarithm function, we get
\begin{align}\label{f g g}
F_{m,n}(x,\alpha,\beta )&=(-x)^n\int_0^\infty\sum_{p,q=1}^\infty\frac{e^{-pxt}e^{2\pi ip\alpha}e^{-qt}e^{2\pi iq\beta}}{p^mq^n}\nonumber\\
&=(-x)^n\int_0^\infty\left(\sum_{0<p<q}+\sum_{p>q>0}\right)\frac{e^{-pxt}e^{2\pi ip\alpha}e^{-qt}e^{2\pi iq\beta}}{p^mq^n}\nonumber\\
&\quad+(-x)^n\int_0^\infty\sum_{p=1}^\infty\frac{e^{-pxt-pt}e^{2\pi i(\alpha+\beta)p}}{p^{m+n}}\nonumber\\
&=(-x)^n\int_0^\infty \mathrm{Li}_{m,n}\left(e^{-xt}e^{2\pi i\alpha},e^{-t}e^{2\pi i\beta}\right)dt\nonumber\\
&\quad+(-x)^n\int_0^\infty \mathrm{Li}_{n,m}\left(e^{-t}e^{2\pi i\beta},e^{-xt}e^{2\pi i\alpha}\right)dt+\frac{(-x)^n}{x+1}\sum_{p=1}^\infty\frac{e^{2\pi i(\alpha+\beta)p}}{p^{m+n+1}}\nonumber\\
&=G_{m,n}(x,\alpha,\beta )+\frac{(-x)^n}{x}\int_0^\infty\mathrm{Li}_{n,m}\left(e^{-t/x}e^{2\pi i\beta},e^{-t}e^{2\pi i\alpha}\right)dt\nonumber\\
&\quad+\frac{(-x)^n}{x+1}\mathrm{Li}_{m+n+1}\left(e^{2\pi i(\alpha+\beta)}\right)\nonumber\\
&=G_{m,n}(x,\alpha,\beta )-(-x)^{m+n-1}G_{n,m}\left(\frac{1}{x},\beta,\alpha \right)+\frac{(-x)^n}{x+1}\mathrm{Li}_{m+n+1}\left(e^{2\pi i(\alpha+\beta)}\right).
\end{align}
Hence invoking \eqref{rec for G} in \eqref{f g g}, we obtain
\begin{align}\label{before multiple}
F_{m,n}(x,\alpha,\beta )&=G_{m+n-1,1}(x,\alpha,\beta )-(-x)^{m+n-1}G_{m+n-1,1}\left(\frac{1}{x}, \beta,\alpha \right)\nonumber\\
&\quad+\frac{(-x)^n}{x+1}\mathrm{Li}_{m+n+1}\left(e^{2\pi i(\alpha+\beta)}\right)-\sum_{r=2}^{n}(-x)^{r-1}\mathrm{Li}_{m+n+1-r,r}( e^{2\pi i \alpha},e^{2\pi i\beta})\nonumber\\
&\quad+(-x)^{m+n-1}\sum_{r=2}^{m}(-x)^{1-r}\mathrm{Li}_{m+n+1-r,r}( e^{2\pi i \beta},e^{2\pi i\alpha} ).
\end{align}
From \eqref{double polylog}, it is straightforward to see 
\begin{align}\label{multiple zeta value}
\mathrm{Li}_{m}\left(e^{2\pi i\alpha}\right)\mathrm{Li}_{k-m}\left(e^{2\pi i\beta}\right)&=\sum_{p,q=1}^\infty\frac{e^{2\pi i\alpha p}e^{2\pi i\beta q}}{p^mq^{k-m}}\nonumber\\
&=\left\{\sum_{0<p<q}+\sum_{p>q>0}+\sum_{p=q}\right\}\frac{e^{2\pi i(\alpha p+\beta q)}}{p^mq^{k-m}}\nonumber\\
&=\mathrm{Li}_{m,k-m}(e^{2\pi i \alpha},e^{2 \pi i \beta} )+\mathrm{Li}_{k-m,m}(e^{2\pi i \beta}, e^{2 \pi i \alpha})+\mathrm{Li}_k\left(e^{2\pi i(\alpha+\beta)}\right).
\end{align}
Equations \eqref{before multiple} and \eqref{multiple zeta value} together yield
\begin{align}
&F_{m,n}(x,\alpha,\beta )\nonumber\\
&=G_{m+n-1,1}(x,\alpha,\beta )-(-x)^{m+n-1}G_{m+n-1,1}\left(\frac{1}{x}. \beta,\alpha \right)+\frac{(-x)^n}{x+1}\mathrm{Li}_{m+n+1}\left(e^{2\pi i(\alpha+\beta)}\right)\nonumber\\
&\quad-\sum_{r=2}^{n}(-x)^{r-1}\left\{-\mathrm{Li}_{r,m+n+1-r}( e^{2\pi i \beta},e^{2 \pi i \alpha})-\mathrm{Li}_{m+n+1}\left(e^{2\pi i(\alpha+\beta)}\right)\right.\nonumber\\
&\left.\quad +\mathrm{Li}_r\left(e^{2\pi i\beta}\right)\mathrm{Li}_{m+n+1-r}\left(e^{2\pi i\alpha}\right)\right\}+\sum_{r=2}^{m}(-x)^{m+n-r}\mathrm{Li}_{m+n+1-r,r}
(e^{2\pi i\beta}, e^{2\pi i \alpha} )\nonumber\\
&=G_{m+n-1,1}(x,\alpha,\beta )-(-x)^{m+n-1}G_{m+n-1,1}\left(\frac{1}{x},\beta,\alpha \right)+\frac{(-x)^n}{x+1}\mathrm{Li}_{m+n+1}\left(e^{2\pi i(\alpha+\beta)}\right)\nonumber\\
&\quad+\sum_{r=2}^{n}(-x)^{r-1}\mathrm{Li}_{r,m+n+1-r}( e^{2\pi i\beta}, e^{2\pi i \alpha})+\sum_{r=n+1}^{m+n-1}(-x)^{r-1}\mathrm{Li}_{r,m+n+1-r}(e^{2\pi i\beta}, e^{2\pi i \alpha})\nonumber\\
&\quad+\mathrm{Li}_{m+n+1}\left(e^{2\pi i(\alpha+\beta)}\right)\sum_{r=2}^n(-x)^{r-1}-\sum_{r=2}^n(-x)^{r-1}\mathrm{Li}_r\left(e^{2\pi i\beta}\right)\mathrm{Li}_{m+n+1-r}\left(e^{2\pi i\alpha}\right).\nonumber
\end{align}
Rearranging the terms and simplifying further in the above equation, we obtain
\begin{align}\label{F in G G}
&F_{m,n}(x,\alpha,\beta )+\sum_{r=2}^n(-x)^{r-1}\mathrm{Li}_r\left(e^{2\pi i\beta}\right)\mathrm{Li}_{m+n+1-r}\left(e^{2\pi i\alpha}\right)\nonumber\\
&=G_{m+n-1,1}(x,\alpha,\beta )-(-x)^{m+n-1}G_{m+n-1,1}\left(\frac{1}{x},\beta,\alpha \right)-\frac{x+(-x)^n}{x+1}\mathrm{Li}_{m+n+1}\left(e^{2\pi i(\alpha+\beta)}\right)\nonumber\\
&\quad+\sum_{r=2}^{m+n-1}(-x)^{r-1}\mathrm{Li}_{r,m+n+1-r}(e^{2\pi i \beta},e^{2\pi i \alpha})+\frac{(-x)^n}{x+1}\mathrm{Li}_{m+n+1}\left(e^{2\pi i(\alpha+\beta)}\right)\nonumber\\
&=G_{m+n-1,1}(x,\alpha,\beta )-(-x)^{m+n-1}G_{m+n-1,1}\left(\frac{1}{x}, \beta,\alpha \right)-\frac{x}{x+1}\mathrm{Li}_{m+n+1}\left(e^{2\pi i(\alpha+\beta)}\right)\nonumber\\
&\quad+\sum_{r=2}^{m+n-1}(-x)^{r-1}\mathrm{Li}_{r,m+n+1-r}(e^{2\pi i \beta},e^{2\pi i \alpha}).
\end{align}
Invoke \eqref{F rec rel} in \eqref{F in G G} to see that
\begin{align}
F_{m+n-1,1}(x,\alpha,\beta )&=G_{m+n-1,1}(x,\alpha,\beta)-(-x)^{m+n-1}G_{m+n-1,1}\left(\frac{1}{x}, \beta,\alpha \right)\nonumber\\
&+\sum_{r=2}^{m+n-1}(-x)^{r-1} \mathrm{Li}_{r,m+n+1-r}(e^{2\pi i \beta},e^{2\pi i \alpha})-\frac{x}{x+1}\mathrm{Li}_{m+n+1}\left(e^{2\pi i(\alpha+\beta)}\right).\nonumber
\end{align}
Letting $k=m+n+1$ and using \eqref{defn Fk}, we deduce that
\begin{align}\label{observe}
F_{k}(x,\alpha,\beta, )&=G_{k-2,1}(x,\alpha,\beta )-(-x)^{k-2}G_{k-2,1}\left(\frac{1}{x},\beta,\alpha \right)\nonumber\\
&\quad+\sum_{r=2}^{k-2}(-x)^{r-1}\mathrm{Li}_{r,k-r}(e^{2\pi i \beta},e^{2\pi i \alpha})-\frac{x}{x+1}\mathrm{Li}_{k}\left(e^{2\pi i(\alpha+\beta)}\right).
\end{align}
Now, observe that
\begin{align}
\mathrm{Li}_{k-1,0}\left(e^{-xt}e^{2\pi i\alpha},e^{-t}e^{2\pi i\beta}\right)&=\sum_{0<p<q}\frac{e^{-pxt}e^{2\pi i\alpha p}e^{-qt}e^{2\pi i\beta q}}{p^{k-1}}\nonumber\\
&=\sum_{m=1}^\infty\sum_{p=1}^\infty \frac{e^{-pxt}e^{2\pi i\alpha p}e^{-(m+p)t}e^{2\pi i\beta (m+p)}}{p^{k-1}}\nonumber\\
&=\sum_{m=1}^\infty e^{-mt}e^{2\pi i\beta m}\sum_{p=1}^\infty\frac{e^{-p(x+1)t}e^{2\pi i(\alpha+\beta)p}}{p^{k-1}}\nonumber\\
&=\mathrm{Li}_0\left(e^{-t}e^{2\pi i\beta}\right)\mathrm{Li}_{k-1}\left(e^{-(x+1)t}e^{2\pi i(\alpha+\beta)}\right).\nonumber
\end{align}
Integrating the above equation, we obtain
\begin{align}\label{int of G}
G_{k-1,0}(x,\alpha,\beta )&=\int_0^\infty \mathrm{Li}_{k-1}\left(e^{-(x+1)t}e^{2\pi i(\alpha+\beta)}\right)\mathrm{Li}_0\left(e^{-t}e^{2\pi i\beta}\right)dt.
\end{align}
Employing \eqref{useful} with letting $m=k-2, n=0$ and replacing $x$ by $x+1$ and $\alpha$  by $\alpha+\beta$, we have
\begin{align}\label{int eval in terms of F}
&\int_0^\infty \mathrm{Li}_{k-1}\left(e^{-(x+1)t}e^{2\pi i(\alpha+\beta)}\right)\mathrm{Li}_0\left(e^{-t}e^{2\pi i\beta}\right)dt\nonumber\\
&=F_{k-2,1}(x+1,\alpha+\beta,\beta )+\mathrm{Li}_{k-1}\left(e^{2\pi i(\alpha+\beta)}\right)\mathrm{Li}_1\left(e^{2\pi i\beta}\right).
\end{align}
Substituting \eqref{int eval in terms of F} in \eqref{int of G} and using \eqref{defn Fk}, we are led to
\begin{align}\label{gk-1}
G_{k-1,0}(x, \alpha,\beta )=F_{k}(x+1, \alpha+\beta,\beta )+\mathrm{Li}_{k-1}\left(e^{2\pi i(\alpha+\beta)}\right)\mathrm{Li}_1\left(e^{2\pi i\beta}\right).
\end{align}
Evaluate $G_{k-2,1}(x,\alpha,\beta )$ from \eqref{original reccur relation} with letting $m=k-2$ and $n=0$ and then in the resulting expression substitute value from \eqref{gk-1} to arrive at
\begin{align}\label{hh as a special case of G}
G_{k-2,1}(x,\alpha,\beta )=F_{k}(x+1,\alpha+\beta,\beta  )+\mathrm{Li}_{k-1}\left(e^{2\pi i(\alpha+\beta)}\right)\mathrm{Li}_1\left(e^{2\pi i\beta}\right)-\mathrm{Li}_{k-1,1}(e^{2 \pi i \alpha}, e^{2 \pi i \beta}).
\end{align}
Equations \eqref{observe} and \eqref{hh as a special case of G} together yield
\begin{align}\label{zetaalpha beta}
&F_{k}(x,\alpha,\beta )-F_{k}(x+1,\alpha+\beta,\beta )+(-x)^{k-2}F_{k}\left(\frac{x+1}{x},\alpha+\beta,\alpha \right)\nonumber\\
&=\mathrm{Li}_{k-1}\left(e^{2\pi i(\alpha+\beta)}\right)\mathrm{Li}_{1}\left(e^{2\pi i\beta}\right)-\mathrm{Li}_{k-1,1} (e^{2\pi i\alpha}, e^{2\pi i \beta} )-(-x)^{k-2}\mathrm{Li}_{k-1}\left(e^{2\pi i(\alpha+\beta)}\right)\mathrm{Li}_{1}\left(e^{2\pi i\alpha}\right)\nonumber\\
&\quad+(-x)^{k-2}\mathrm{Li}_{k-1,1}(e^{2 \pi i \beta}, e^{2 \pi i \alpha} )+\sum_{r=2}^{k-2}(-x)^{r-1}\mathrm{Li}_{r,k-r} (e^{2 \pi i \beta}, e^{2 \pi i \alpha})-\frac{x}{x+1}\mathrm{Li}_{k}\left(e^{2\pi i(\alpha+\beta)}\right).
\end{align}
From \eqref{multiple zeta value}, we have
\begin{align}
\mathrm{Li}_{k-1,1}( e^{2 \pi i \alpha}, e^{2 \pi i \beta})
=-\mathrm{Li}_{1,k-1} ( e^{2 \pi i \beta}, e^{2 \pi i \alpha})-\mathrm{Li}_{k}\left(e^{2\pi i(\alpha+\beta)}\right)+\mathrm{Li}_{k-1}\left(e^{2\pi i\alpha}\right)\mathrm{Li}_{1}\left(e^{2\pi i\beta}\right).\nonumber
\end{align}
Using the above relation in \eqref{zetaalpha beta} and  noting that $\mathrm{Li}_{1,k-1}(e^{2 \pi i \beta}, e^{2 \pi i \alpha})$ and \newline $(-x)^{k-2}\mathrm{Li}_{k-1,1}(e^{2 \pi i \beta}, e^{2 \pi i \alpha})$ are $r=1$ and $r=k-1$ terms of the finite sum respectively, we deduce that
\begin{align}
&F_{k}(x,\alpha,\beta )-F_{k}(x+1,\alpha+\beta,\beta )+(-x)^{k-2}F_{k}\left( \frac{x+1}{x}, \alpha+\beta,\alpha\right)\nonumber\\
&=\mathrm{Li}_{k-1}\left(e^{2\pi i(\alpha+\beta)}\right)\mathrm{Li}_{1}\left(e^{2\pi i\beta}\right)-(-x)^{k-2}\mathrm{Li}_{k-1}\left(e^{2\pi i(\alpha+\beta)}\right)\mathrm{Li}_{1}\left(e^{2\pi i\alpha}\right)\nonumber\\
&\quad+\frac{1}{x+1}\mathrm{Li}_{k}\left(e^{2\pi i(\alpha+\beta)}\right)-\mathrm{Li}_{k-1}\left(e^{2\pi i\alpha}\right)\mathrm{Li}_{1}\left(e^{2\pi i\beta}\right)+\sum_{r=1}^{k-1}(-x)^{r-1}\mathrm{Li}_{r,k-r}(e^{2 \pi i \beta}, e^{2 \pi i \alpha}).\nonumber
\end{align}
The three-term functional equation \eqref{three term} now follows upon invoking \eqref{hh as a special case of double hh} in the above equation and simplifying the terms.
 
Finally six-term relation \eqref{six term} immediately comes from three-term relation \eqref{three term} and using the relation
$$F_{k}|_{2k}(-I)(x,\alpha,\beta ) = F_{k}(x,-\alpha, -\beta ).$$
This completes the proof of Theorem \ref{fe thm}.

\end{proof}
 
\medskip

\subsection{Proof of the second Kronecker limit formula}
\begin{proof}[\textbf{Proof of Theorem \textup{\ref{mz}}}]

We first prove the first part of the theorem. More precisely, we show that the function $Z_Q(s;(\alpha,\beta))$ is an analytic function of $s$ in the region Re$(s)>\frac{1}{2}$.

To proceed, we first rewrite $Z_Q(s;(\alpha,\beta))$ in an equivalent form. Let $Q(x,y)=ax^2+bxy+cy^2$ denote the quadratic form from \eqref{epstein zeta}. Solving the quadratic equation $Q(1,-x)=0$, that is, $cx^2-bx+a=0$ yields the roots $w=\left(b+\sqrt{b^2-4ac}\right)/(2c)$ and $w'=\left(b-\sqrt{b^2-4ac}\right) /(2c)$. In particular, we have the identities $ww'=a/c,\ w+w'=b/c$ and $w-w'=1/c$. Hence, using these facts, one can see that
\begin{align*}
\frac{1}{w-w'}(y+xw)(y+xw')&=\frac{y^2+(w+w')xy+x^2ww'}{w-w'}\\
&=c\left(y^2+\frac{b}{c}xy+\frac{a}{c}x^2\right)\\
&=ax^2+bxy+cy^2=Q(x,y).
\end{align*}
Therefore, the zeta function $Z_Q(s;(\alpha,\beta))$ becomes
\begin{align}\label{epstein alternate}
Z_Q(s;(\alpha,\beta))=\sum_{p\geq1}\sum_{q\geq0}e^{2\pi i(p\alpha+q\beta)}\frac{(w-w')^s}{(pw+q)^s(pw'+q)^s}.
\end{align}

Since the series in \eqref{zagier zeta function} converges absolutely for 
Re$(s)>1$, the same holds for the above series in this region. We now prove the analyticity of the series in \eqref{epstein zeta} (or \eqref{epstein alternate}) in the extended half-plane Re$(s)>1/2$. To that end, let us consider the partial sums
\begin{align}\label{partial sum}
\sum_{1\leq p\leq m_1}\sum_{0\leq q\leq m_2}\frac{e^{2\pi ip\alpha}e^{2\pi iq\beta}}{(pw+q)^s(pw'+q)^s}.
\end{align}
 \\
Since 
\begin{eqnarray}\label{difference}
\displaystyle c_{q+1}-c_q=e^{2\pi i\beta q} 
\, \, \mbox{\, and 
$\displaystyle |c_q|<\frac{1}{|e^{2\pi i\beta}-1|}$ \,\,
with
$\displaystyle c_q:=\frac{e^{2\pi i\beta q}}{e^{2\pi i\beta }-1},$}
\end{eqnarray}
we can write
\begin{align}\label{equality}
&\sum_{1\leq p\leq m_1}\sum_{0\leq q\leq m_2}\frac{e^{2\pi ip\alpha}e^{2\pi iq\beta}}{(pw+q)^s(pw'+q)^s}\nonumber\\
&=\sum_{1\leq p\leq m_1}e^{2\pi ip\alpha}\left\{\sum_{1\leq q\leq m_2}c_q\left(\frac{1}{(pw+q-1)^s(pw'+q-1)^s}-\frac{1}{(pw+q)^s(pw'+q)^s}\right)\right.\nonumber\\
&\qquad\left.+\frac{c_{m_2+1}}{(pw+m_2)^s(pw'+m_2)^s}-\frac{c_0}{p^{2s}(ww')^s}\right\}.
\end{align} 
It is easy to see that since $p\geq1,\ m_2\geq1,\ w>0,\ w'>0$, and Re$(s)>0$, we have
\begin{align}
\left|(pw+m_2)^s(pw'+m_2)^s\right|&=\left|(p^2ww'+pm_2(w+w')+m_2^2)^s\right|\nonumber\\
&\geq\left|(p^2ww'+pm_2(w+w'))^s\right|\nonumber\\
&\geq\left|(p^2ww')^s\right|\nonumber\\
&=(ww')^{\mathrm{Re}(s)}p^{2\mathrm{Re}(s)}.\nonumber
\end{align}
Combining this with equation \eqref{difference}, we arrive at 
\begin{align}\label{inequality2}
\left|\frac{c_{m_2+1}}{(pw+m_2)^s(pw'+m_2)^s}\right|<\frac{1}{\left|e^{2\pi i\beta}-1\right|}\frac{1}{(ww')^{\mathrm{Re}(s)}}\frac{1}{p^{2\mathrm{Re}(s)}}.
\end{align}
We know that for any continuous function $\phi(u)$,
\begin{align*}
\phi(m-1)-\phi(m)=\int_{m-1}^m\phi'(u)du.
\end{align*}
Applying this identity with $\displaystyle\phi(u)=\frac{1}{(pw+u)^s(pw'+u)^s}$ and $m=q$, we obtain
\begin{align}\label{inequality3}
&\left|\frac{1}{(pw+q-1)^s(pw'+q-1)^s}-\frac{1}{(pw+q)^s(pw'+q)^s}\right|\nonumber\\
&=\left|\int_{q-1}^q\frac{-s((p(w+w')+2u))}{((pw+u)(pw'+u))^{s+1}}\right|du\nonumber\\
&<2|s|\int_{q-1}^q\frac{p(w+w')}{\left(p^2ww'+pu(w+w')+u^2\right)^{\mathrm{Re}(s)+1}}du+\int_{q-1}^q\frac{2|s| udu}{\left(p^2ww'+pu(w+w')+u^2\right)^{\mathrm{Re}(s)+1}}.
\end{align}
Hence, from \eqref{equality}, \eqref{inequality2} and \eqref{inequality3}, we see that
\begin{align}\label{partial sum finite}
&\left|\sum_{1\leq p\leq m_1}\sum_{0\leq q\leq m_2}\frac{e^{2\pi ip\alpha}e^{2\pi iq\beta}}{(pw+q)^s(pw'+q)^s}\right|\nonumber\\
&< \sum_{1\leq p\leq m_1}\left|e^{2\pi ip\alpha}\right|\left\{\sum_{1\leq q\leq m_2}\frac{2|s|}{|e^{2\pi i\beta}-1|}\left[\int_{q-1}^q\frac{p(w+w')}{\left(p^2ww'+pu(w+w')+u^2\right)^{\mathrm{Re}(s)+1}}du\right.\right.\nonumber\\
&\quad\left.\left.+\int_{q-1}^q\frac{u}{\left(p^2ww'+pu(w+w')+u^2\right)^{\mathrm{Re}(s)+1}}du\right]+\frac{1}{|e^{2\pi i\beta}-1|}\frac{1}{(ww'p^2)^{\mathrm{Re}(s)}}\right.\nonumber\\
&\quad\left.+\frac{1}{|e^{2\pi i\beta}-1|}\frac{1}{(ww'p^2)^{\mathrm{Re}(s)}}\right\}\nonumber\\
&=\frac{2|s|}{|e^{2\pi i\beta}-1|}\left\{\sum_{1\leq p\leq m_1}(w+w')\int_{0}^{m_2}\frac{p}{\left(p^2ww'+pu(w+w')+u^2\right)^{\mathrm{Re}(s)+1}}du\right.\nonumber\\
&\left.\quad+\sum_{1\leq p\leq m_1}\int_{0}^{m_2}\frac{u}{\left(p^2ww'+pu(w+w')+u^2\right)^{\mathrm{Re}(s)+1}}du\right\}\nonumber\\
&\quad+\frac{2}{|e^{2\pi i\beta}-1|}\frac{1}{(ww')^{\mathrm{Re}(s)}}\sum_{1\leq p\leq m_1}\frac{1}{p^{2\mathrm{Re}(s)}}.
\end{align}
We next show that, when $m_1\to\infty$ and $m_2\to\infty$,  these partial sums, which are entire functions of $s$, converge uniformly in every compact subset of the half-plane Re$(s)>\frac{1}{2}$. To that end, we majorize these partial sums by functions that depend only on the values of $s$  in compact subsets of the region Re$(s)>\frac{1}{2}$. 

Observe that the right-hand side of \eqref{partial sum finite} can be estimated by
\begin{align}
&\frac{2|s|}{|e^{2\pi i\beta}-1|}\left\{(w+w')\sum_{1\leq p\leq m_1}\int_{0}^{\infty}\frac{p}{\left(p^2ww'+pu(w+w')+u^2\right)^{\mathrm{Re}(s)+1}}du\right.\nonumber\\
&\left.+\sum_{1\leq p\leq m_1}\int_{0}^{\infty}\frac{u}{\left(p^2ww'+pu(w+w')+u^2\right)^{\mathrm{Re}(s)+1}}du\right\}+\frac{2}{|e^{2\pi i\beta}-1|}\frac{1}{(ww')^{\mathrm{Re}(s)}}\sum_{1\leq p\leq m_1}\frac{1}{p^{2\mathrm{Re}(s)}}\nonumber\\
&= \frac{2|s|}{|e^{2\pi i\beta}-1|}\left\{(w+w')\sum_{1\leq p\leq m_1}\frac{1}{p^{2\mathrm{Re}(s)}}\int_{0}^{\infty}\frac{1}{\left(ww'+u(w+w')+u^2\right)^{\mathrm{Re}(s)+1}}du\right.\nonumber\\
&\left.+\sum_{1\leq p\leq m_1}\frac{1}{p^{2\mathrm{Re}(s)}}\int_{0}^{\infty}\frac{u}{\left(ww'+u(w+w')+u^2\right)^{\mathrm{Re}(s)+1}}du\right\}+\frac{2}{|e^{2\pi i\beta}-1|}\frac{1}{(ww')^{\mathrm{Re}(s)}}\sum_{1\leq p\leq m_1}\frac{1}{p^{2\mathrm{Re}(s)}},\nonumber
\end{align}
where we employed the change of variable $u\to pu$ in both integrals. We now use the simple inequality $ww'+u(w+w')+u^2>u^2+\epsilon,\ \epsilon>0$ which holds true for $w>w'>0$ and $u\geq0$. Thus, for Re$(s)>1/2$,
\begin{align}
&\frac{2|s|}{|e^{2\pi i\beta}-1|}\left\{(w+w')\sum_{1\leq p\leq m_1}\int_{0}^{\infty}\frac{p}{\left(p^2ww'+pu(w+w')+u^2\right)^{\mathrm{Re}(s)+1}}du\right.\nonumber\\
&\left.+\sum_{1\leq p\leq m_1}\int_{0}^{\infty}\frac{u}{\left(p^2ww'+pu(w+w')+u^2\right)^{\mathrm{Re}(s)+1}}du\right\}+\frac{2}{|e^{2\pi i\beta}-1|}\frac{1}{(ww')^{\mathrm{Re}(s)}}\sum_{1\leq p\leq m_1}\frac{1}{p^{2\mathrm{Re}(s)}}\nonumber\\
&< \frac{2|s|}{|e^{2\pi i\beta}-1|}\left\{(w+w')\int_{0}^{\infty}\frac{1}{(u^2+\epsilon)^{\mathrm{Re}(s)+1}}du\sum_{1\leq p\leq m_1}\frac{1}{p^{2\mathrm{Re}(s)}}\right.\nonumber\\
&\left.\quad+\int_{0}^{\infty}\frac{u}{(u^2+\epsilon)^{\mathrm{Re}(s)+1}}du\sum_{1\leq p\leq m_1}\frac{1}{p^{2\mathrm{Re}(s)}}\right\}+\frac{2}{|e^{2\pi i\beta}-1|}\frac{1}{(ww')^{\mathrm{Re}(s)}}\sum_{1\leq p\leq m_1}\frac{1}{p^{2\mathrm{Re}(s)}}\nonumber\\
&\ll A\sum_{1\leq p\leq m_1}\frac{1}{p^{2\mathrm{Re}(s)}},\nonumber
\end{align}
which follows from the fact that the both integrals on the right-hand side in the penultimate step are also convergent when Re$(s)>1/2$, and where the constant $A$ depends only on the values of $s$ lying in compact subsets of the region Re$(s)>1/2$ and on the fixed parameters $\alpha,\beta,w$ and $w'$. Hence, when we sum over  $p$ from 1 to $\infty$ and $q$ from $0$ to $\infty$ in \eqref{partial sum finite}, the double series \eqref{epstein alternate} is estimated by the series $A\sum_{p=1}^\infty\frac{1}{p^{2\mathrm{Re}(s)}}$. Therefore, the double series \eqref{epstein alternate} converges uniformly when $s$ lies in any compact subset  in the half-plane Re$(s)>1/2$.

Furthermore, each partial sum in \eqref{partial sum} is an entire function of $s$. Hence, by Weierstrass' theorem for analytic functions, we conclude that the double series also represents an analytic function in the region Re$(s)>1/2$. This completes the proof  of the first part of the theorem.

The second part follows by assuming $\alpha,\beta\in\mathbb{R}\backslash\mathbb{Z}$ and taking $k=1$ in \eqref{general limit formula}, along with the identity
\begin{align*}
(\mathscr{D}_0\mathscr{F}_2)(w,w',\alpha,\beta)=\mathscr{F}_2(w;\alpha,\beta)-\mathscr{F}_2(w';\alpha,\beta),
\end{align*}
which is a direct consequence of the definition of the operator $\mathscr{D}_n$ defined in \eqref{doperator}. Note that setting $k = 1$ in \eqref{general limit formula} is valid, see proof of Theorem \ref{main} for more details.

\end{proof}

\begin{proof}[\textbf{Proof of Theorem \textup{\ref{main}}}] 
From \cite[pp.~33-34, Lemma 7(iv)]{VZ}, for any $T$, one has
$$\mathscr{D}_n\left(\frac{1}{T-x}\right)=\left(\frac{x-y}{(T-x)(T-y)}\right)^{n+1},$$
where the differential operator $\mathscr{D}_n$ is defined in \eqref{doperator}. Therefore, using the above result, it is easy to see that
\begin{align}\label{derivative of}
\left(\frac{(x-y)}{(px+q)(py+q)}\right)^k
=-\mathscr{D}_{k-1}\left(\frac{1}{p^{2k-1}(px+q)}\right).
\end{align} 
Equation \eqref{derivative of} along with \eqref{hf} and \eqref{epstein zeta} gives
\begin{align}
Z_Q(k; (\alpha, \beta) )=-(\mathscr{D}_{k-1}\mathscr{F}_{2k})(w,w';\alpha,\beta).\nonumber
\end{align}
Note that this equation is valid for every integer $k > 1$ when $\alpha$ and $\beta$ are arbitrary real numbers. However, observe that it is also valid for $k = 1$ only when $\alpha, \beta \in \mathbb{R} \setminus \mathbb{Z}$. This completes the proof of the theorem.
\end{proof}

\begin{proof}[\textbf{Proof of Theorem \textup{\ref{real second kronecker limit formula}}}]
The result follows once we observe that the summand on the right-hand side of \eqref{connection b/w real and our} is simply a special case of the zeta function in \eqref{epstein zeta}, corresponding to the case when $\alpha,\beta\in\mathcal{S}$. Then all the the parts of our theorem follows after applying Theorems \ref{mz} and \eqref{main} accordingly.
\end{proof}

\medskip

\medskip
\subsection{Proof of cohomological aspects}

\begin{proof}[\textbf{Proof of Proposition \textup{\ref{co}}}]
Since 
\begin{align*}
F(x)=\phi_S=\psi_{\alpha,\beta,2k},
\end{align*}
and 
\begin{align*}
G(x)=\phi_S.
\end{align*}
One checks directly that $\psi_{\alpha,\beta,2k}(x)$ satisfies \eqref{period}:
\begin{align*}
\psi_{\alpha,\beta,2k}|_{2k}(I+(-I)+S+(-S))=0,
\end{align*}
and
\begin{align*}
\psi_{\alpha,\beta,2k}|_{2k}(I+(-I)+U+(-U)+U^2+(-U)^2)=0.
\end{align*}
This claims  the proposition.
\end{proof}

\subsection{Proof of the connection between $\mathscr{F}_{k}(x; \alpha,\beta)$ and a generalized Dedekind eta function}
\begin{proof}[\textbf{Proof of Lemma \textup{\ref{new int repr}}}]
We have, for $c:=\mathrm{Re}(s)>0,\beta\in\mathbb{R}\backslash\mathbb{Z}$ \cite[p.~611, Formula 25.12.11]{nist},
\begin{align}
\frac{1}{2\pi i}\int_{(c)}\Gamma(s)\mathrm{Li}_s\left(e^{2\pi i\beta}\right)t^{-s}ds=\frac{1}{e^te^{-2\pi i\beta}-1},\nonumber
\end{align}
here, and in the rest of the paper, $\int_{(c)}ds$ denotes the line integral $\int_{c-i\infty}^{c+i\infty}ds$ with $c=\mathrm{Re}(s)$. Also for $d>0$,
\begin{align}
\frac{1}{2\pi i}\int_{(d)}\Gamma(s)x^{-s}t^{-s}ds=e^{-xt}.\nonumber
\end{align}
Thus, by invoking the Parseval formula \cite[p.~83, Equation (3.1.14)]{parsvel}, for $0<\lambda<1$, we obtain
\begin{align}
\int_0^\infty\frac{e^{-xt}}{e^{t}e^{-2\pi i\beta}-1}dt=\frac{1}{2\pi i}\int_{(\lambda)}\Gamma(s)\Gamma(1-s)\mathrm{Li}_{1-s}\left(e^{2\pi i\beta}\right)x^{-s}ds,\nonumber
\end{align}
which is, upon using the definition of $\psi_\beta(x)$ given in \eqref{poly},
\begin{align}\label{inverseMellinTransform of psi_beta}
\psi_\beta(x)-\frac{1}{x}=\frac{1}{2\pi i}\int_{(\lambda)}\Gamma(s)\Gamma(1-s)\mathrm{Li}_{1-s}\left(e^{2\pi i\beta}\right)x^{-s}ds.
\end{align}
The Lerch zeta function, defined in \eqref{Lerch},
satisfies the following beautiful functional equation \cite{be}, for $0<a\leq1$ and $0<x<1$,
\begin{align}
\phi(x,a,1-s)=(2\pi)^{-s}\Gamma(s)\left\{e^{\frac{\pi is}{2}-2\pi iax}\phi(-a,x,s)+e^{-\frac{\pi is}{2}+2\pi ia(1-x)}\phi(a,1-x,s)\right\}.\nonumber
\end{align}
It is clear that $\phi(x,1,s)=e^{-2\pi ix}\mathrm{Li}_s\left(e^{2\pi ix}\right)$ and $\phi(\pm1,x,s)=\zeta(s,x)$, where $\zeta(s,x)$ is the Hurwitz zeta function. Therefore, if we let $a=1$ and replace $x$ by $\beta$ in the above functional equation then 
\begin{align}\label{realtion for li and hurwitz zeta}
\mathrm{Li}_{1-s}\left(e^{2\pi i\beta}\right)=(2\pi)^{-s}\Gamma(s)\left\{e^{\frac{\pi is}{2}}\zeta(s,\beta)+e^{-\frac{\pi is}{2}}\zeta(s,1-\beta)\right\},
\end{align}
for $0<\beta<1$. Substituting \eqref{realtion for li and hurwitz zeta} in \eqref{inverseMellinTransform of psi_beta}, we deduce that
\begin{align}\label{two intergals}
\psi_\beta(x)-\frac{1}{x}&=\frac{1}{2\pi i}\int_{(\lambda)}\Gamma(s)\Gamma(1-s)\Gamma(s)\left\{\zeta(s,\beta)(-2\pi ix)^{-s}+\zeta(s,1-\beta)(2\pi ix)^{-s}\right\}ds.
\end{align}
We next evaluate the line integrals involved in the above equation. To that end, note that, for $0<c'<1$, we have
\begin{align}
\frac{1}{2\pi i}\int_{(c')}\Gamma(s)\Gamma(1-s)(-2\pi ix)^{s-1}t^{-s}ds=\frac{i}{it+2\pi x}.\nonumber
\end{align}
By using the Cauchy residue theorem, one can find that, for $-1<c''<0$, 
\begin{align}\label{c''}
\frac{1}{2\pi i}\int_{(c'')}\Gamma(s)\Gamma(1-s)(-2\pi ix)^{s-1}t^{-s}ds=\frac{i}{it+2\pi x}-\frac{i}{2\pi x}.
\end{align}
We also need the following well-known result for the Hurwitz zeta function \cite[p.~609, Equation (25.11.25)]{nist}
\begin{align}\label{d}
\frac{1}{2\pi i}\int_{(d)}\Gamma(s)\zeta(s,\beta)t^{-s}ds=\frac{e^{-\beta t}}{1-e^{-t}},
\end{align}
which is valid for $d>1$. Again an application of Parsval formula \cite[p.~83, Equation (3.1.14)]{parsvel} on \eqref{c''} and \eqref{d} yields, for $1<c<2$,
\begin{align}\label{first int}
\frac{1}{2\pi i}\int_{(c)}\Gamma(s)\Gamma(1-s)\Gamma(s)\zeta(s,\beta)(-2\pi ix)^{-s}ds=\int_0^\infty\left(\frac{i}{it+2\pi x}-\frac{i}{2\pi x}\right)\frac{e^{-\beta t}}{1-e^{-t}}dt.
\end{align}
Similarly, we can also find
\begin{align}\label{second int}
\frac{1}{2\pi i}\int_{(c)}\Gamma(s)\Gamma(1-s)\Gamma(s)\zeta(s,1-\beta)(2\pi ix)^{-s}ds=\int_0^\infty\left(\frac{i}{it-2\pi x}+\frac{i}{2\pi x}\right)\frac{e^{-(1-\beta) t}}{1-e^{-t}}dt.
\end{align}
Adding \eqref{first int} and \eqref{second int}, for $1<c<2$,  we arrive at
\begin{align}
&\frac{1}{2\pi i}\int_{(c)}\Gamma(s)\Gamma(1-s)\Gamma(s)\left\{\zeta(s,\beta)(-2\pi ix)^{-s}+\zeta(s,1-\beta)(2\pi ix)^{-s}\right\}ds\nonumber\\
&=\int_0^\infty\left(\frac{i}{it+2\pi x}-\frac{i}{2\pi x}\right)\frac{e^{-\beta t}}{1-e^{-t}}dt+\int_0^\infty\left(\frac{i}{it-2\pi x}+\frac{i}{2\pi x}\right)\frac{e^{-(1-\beta) t}}{1-e^{-t}}dt\nonumber\\
&=\int_{-\infty}^0\left(\frac{i}{-it+2\pi x}-\frac{i}{2\pi x}\right)\frac{e^{\beta t}}{1-e^{t}}dt+\int_0^\infty\left(\frac{i}{it-2\pi x}+\frac{i}{2\pi x}\right)\frac{e^{\beta t}}{e^{t}-1}dt,\nonumber
\end{align}
where in the first integral we replaced $t$ by $-t$. Observe that the integrand of the above both integrals are same, thus we obtain
\begin{align}
&\frac{1}{2\pi i}\int_{(c)}\Gamma(s)\Gamma(1-s)\Gamma(s)\left\{\zeta(s,\beta)(-2\pi ix)^{-s}+\zeta(s,1-\beta)(2\pi ix)^{-s}\right\}ds\nonumber\\
&=\int_{-\infty}^\infty\left(\frac{i}{it-2\pi x}+\frac{i}{2\pi x}\right)\frac{e^{\beta t}}{e^{t}-1}dt.\nonumber
\end{align}
To substitute the above integral evaluation in \eqref{two intergals}, we need to shift the line of integration to $1<\lambda<2$ and use the residue theorem so as to find
\begin{align}
&\frac{1}{2\pi i}\int_{(\lambda)}\Gamma(s)\Gamma(1-s)\Gamma(s)\left\{\zeta(s,\beta)(-2\pi ix)^{-s}+\zeta(s,1-\beta)(2\pi ix)^{-s}\right\}ds\nonumber\\
&=-\frac{1}{2x}-\frac{i}{2\pi x}\left(\psi(\beta)-\psi(1-\beta)\right)+\int_{-\infty}^\infty\left(\frac{i}{it-2\pi x}+\frac{i}{2\pi x}\right)\frac{e^{\beta t}}{e^{t}-1}dt.\nonumber
\end{align}
Now theorem immediately follows upon substituting the above integral evaluation into \eqref{two intergals}.
\end{proof}

\begin{proof}[\textbf{Proof  of Proposition \textup{\ref{bi}}}] 
Employing \eqref{higher herglotz} and Lemma \ref{new int repr},
\begin{align}
\mathscr{F}_{k}(x; \alpha,\beta)&=\frac{1}{2x}\mathrm{Li}_{k}(e^{2\pi i \alpha})
-\frac{i}{2\pi x}\left(\psi(\beta)-\psi(1-\beta)\right)\mathrm{Li}_{k}(e^{2\pi i \alpha})\nonumber\\
&\quad+  \int_{0}^{i \infty} \left(\frac{1}{  t+ x}-\frac{1}{ x}\right)\sum_{n\geq 1} \frac{e^{2\pi i \alpha n}}{n^{k-1}}
\frac{e^{ 2\pi i \beta nt}}{1-e^{ 2\pi int}}dt\nonumber\\
&\quad+\int_{0}^{i \infty} \left(\frac{1}{ t- x}+\frac{1}{ x}\right)\sum_{n\geq 1} \frac{e^{2\pi i \alpha n}}{n^{k-1}}
\frac{e^{  2\pi (1- \beta) int}}{1-e^{  2\pi i nt} }dt.\nonumber
\end{align}
This implies that
\begin{align}\label{there}
\mathscr{F}_{k}(x; \alpha,\beta)+\mathscr{F}_{k}(x; \alpha,1-\beta)&=\frac{1}{x}\mathrm{Li}_{k}(e^{2\pi i \alpha})+\int_{0}^{i \infty} \left(\frac{1}{  t+ x}+\frac{1}{ t-x}\right)\sum_{n\geq 1} \frac{e^{2\pi i n\alpha+2\pi i \beta nt}}{n^{k-1}(1-e^{ 2\pi int})}dt\nonumber\\
&\quad+\int_{0}^{i \infty} \left(\frac{1}{  t+ x}+\frac{1}{ t-x}\right)\sum_{n\geq 1} \frac{e^{2\pi i n\alpha+2\pi i (1-\beta) nt}}{n^{k-1}(1-e^{ 2\pi int})}dt\nonumber\\
&=\frac{1}{x}\mathrm{Li}_{k}(e^{2\pi i \alpha})+\int_{0}^{i \infty} \left(\frac{1}{  t+ x}+\frac{1}{ t-x}\right)\sum_{\substack{n\geq 1\\m\geq0}} \frac{e^{2\pi i n\alpha+2\pi it(m+\beta)n}}{n^{k-1}}dt\nonumber\\
&\quad+\int_{0}^{i \infty} \left(\frac{1}{  t+ x}+\frac{1}{ t-x}\right)\sum_{\substack{n\geq 1\\m\geq0}} \frac{e^{2\pi i n\alpha+2\pi it(m+(1-\beta))n}}{n^{k-1}}dt\nonumber\\
&=\frac{1}{x}\mathrm{Li}_{k}(e^{2\pi i \alpha})+\int_{0}^{i \infty} \left(\frac{1}{  t+ x}+\frac{1}{ t-x}\right)\left\{A(t, 2-k,\beta, \alpha)\right.\nonumber\\
&\qquad\qquad\left.+A(t, 2-k, -\beta,\alpha) \right\}dt,
\end{align}
where in the last sterp we used the definition of $A(t,s,\beta,\alpha)$ in \eqref{gen ded} which becomes $A(t,s,\beta,\alpha)\newline=\sum_{m\geq1}\sum_{n\geq 1} n^{s-1}e^{2\pi i n\alpha}e^{2\pi it(m+\beta)n}$ for $0<\beta<1$.
 Now utilizing  \eqref{defnG} in \eqref{there} and then using the fact that $\mathscr{F}_{k}(x; \alpha,1-\beta)=\mathscr{F}_{k}(x; \alpha,-\beta)$, we conclude the proof of \eqref{cc}.
\end{proof}

\begin{proof}[\textbf{Proof of Equation \textup{\eqref{gde and}}}]
Replacing $\tau$ by $(m+\beta)\tau+\alpha$ in the well-known Lipschitz summation formula \cite{lip} 
\begin{align}
\sum_{n\in\mathbb{Z}}\frac{1}{(\tau+n)^s}=\frac{(-2\pi i)^s}{\Gamma(s)}\sum_{n=1}^\infty  n^{s-1}e^{2\pi in\tau}\quad (\mathrm{Re}(s)>1,\ \tau\in\mathbb{H}),\nonumber
\end{align}
we obtain
\begin{align}
\sum_{n\in\mathbb{Z}}\frac{1}{((m+\beta)\tau+\alpha+n)^s}=\frac{(-2\pi i)^s}{\Gamma(s)}\sum_{n=1}^\infty  n^{s-1}e^{2\pi in\alpha}e^{2\pi in(m+\beta)\tau}.\nonumber
\end{align}
Now if we sum over natural numbers $m$ including $0$ on the both sides of the above equation and use \eqref{gen ded}, then for $0<\beta<1$ and Re$(s)>2$,
\begin{align}\label{fourier}
\sum_{m=0}^\infty\sum_{n\in\mathbb{Z}}\frac{1}{((m+\beta)\tau+\alpha+n)^s}
=\frac{(-2\pi i)^s}{\Gamma(s)}A(\tau,s,\beta,\alpha).
\end{align} 
For $0<\beta<1$, equation \eqref{berndt es} can be re-written as 
\begin{align}
G(\tau,s,\beta,\alpha)&=\sum_{m=0}^\infty\sum_{n=-\infty}^\infty\frac{1}{((m+\beta)\tau+n+\alpha)^s}+\sum_{m=0}^\infty\sum_{n=-\infty}^\infty\frac{1}{((-m-1+\beta)\tau+n+\alpha)^s}\nonumber\\
&=  \sum_{m > -\beta}^\infty\sum_{n=-\infty}^\infty\frac{1}{((m+\beta)\tau+n+\alpha)^s}+\sum_{m>  \beta}^\infty\sum_{n=-\infty}^\infty\frac{1}{(-1)^s((m -\beta)\tau+n-\alpha)^s}\nonumber\\
&=\frac{(-2\pi i)^s}{\Gamma(s)}\left\{A(\tau,s,\beta,\alpha)+   e^{\pi is}A(\tau,s, -\beta,-\alpha)\right\}, \nonumber
\end{align}
where we invoked \eqref{fourier}. This proves the relation \eqref{gde and}.
\end{proof}

\subsection{ Proof of rational zeta values}

\begin{proof}[\textbf{Proof of Lemma \textup{\ref{rz}}}]
We follow the notations from \cite{VZ}.  As in \eqref{cont}, take a ``reduced" number
$$w>1 >w'>0,$$ with purely periodic continued fraction
$(b_1, b_2, \cdots, b_r),$  so that the numbers in $Red(\mathscr{B})$ are exactly
$$w_k=(b_k, b_{k-1},\cdots, b_{1}).$$ 
Let $x$ be a reduced number in wide sense, that is, 
$$x>1, 0> x'>-1.$$ Then it is known that
\begin{eqnarray} 
x=  a_1+\cfrac{1}{a_{2}+\ \genfrac{}{}{0pt}{0}{}{\ddots\genfrac{}{}{0pt}{0}{}\ \genfrac{}{}{0pt}{0}{}{+\cfrac{1}{a_m+\cfrac{1}{a_{1}+\genfrac{}{}{0pt}{0}{}{\ddots}}}}}} , \,\, (a_i\in \mathbb{Z}, a_i\geq 1, j\neq 1)\nonumber
\end{eqnarray}
So   a wide ideal class $\mathscr{A} $ of a real quadratic field $K$ contains a cycle of  numbers
\begin{eqnarray} 
x_i=  a_i+\cfrac{1}{a_{i+1}+\ \genfrac{}{}{0pt}{0}{}{\ddots\genfrac{}{}{0pt}{0}{}\ \genfrac{}{}{0pt}{0}{}{+\cfrac{1}{a_{i+m}+\cfrac{1}{a_{1}+\genfrac{}{}{0pt}{0}{}{\ddots}}}}}} , \,\, ( i\in \mathbb{Z} / m\mathbb{Z}).\nonumber
\end{eqnarray}
Note that 
\begin{eqnarray*}
&&w_1=1+x_1=a_1+2-\frac{1}{w_2},\\
&&w_2=2-\frac{1}{w_3}, \cdots, w_{a_2}=2-\frac{1}{w_{a_2+1}},\\
&& w_{a_2+1}=x_3+1=a_3+2-\frac{1}{w_{a_2+2}}, \cdots.
\end{eqnarray*}
Let Red$(\mathscr{B})$ be a narrow ideal class $\mathscr{B}$ so
Red$_w(\mathscr{B})=\{x_1, x_3, \cdots\}$ and 
Red$_{w}(\mathscr{B}^*)=\{x_2,x_4, \cdots\}.$
\begin{eqnarray*}
\sum_{j=2}^{a_2+1}F(w_j, w_j', \alpha, \alpha)
&=&\sum_{j=1}^{a_2+1}
\left[
g(w_j-1, w_j'-1,0, \alpha) - g\left(1-\frac{1}{w_j}, 1-\frac{1}{{w_j}'},0, \alpha\right)\right]\\
&=& g ( w_{a_2+1}-1,  w_{a_2+1}'-1,0, \alpha)- g\left(1-\frac{1}{w_2}, 1-\frac{1}{{w_2}'},0,\alpha\right)\\
&=& g(x_3, x_3', 0, \alpha)-g\left(\frac{1}{x_2}, \frac{1}{x_2'}, 0,\alpha \right)
\end{eqnarray*}
Summing this cyclically gives a proof of Lemma 1.
From the choice of $g$ we get
$$F(x,y,\alpha,\alpha)=H_1(x-y,0,\alpha) - H_1\left(-\frac{1}{x}+\frac{1}{y}, 0,\alpha\right) $$
and also using the fact
$$w_j-w_j'=-\frac{1}{w_{j+1}}+\frac{1}{w_{j+1}'},$$
we conclude the proof of Lemma.
\end{proof}

 \medskip

 \begin{proof}[Proof of Corollary \textup{\ref{rc}}]
From Lemma \ref{rz} 
\begin{align*}
\mathscr{I}(\mathscr{B})&=\sum_{w\in \mathrm{Red}(\mathscr{B})}F(w, w', \alpha, \alpha)\\
&=\sum_{x\in \mathrm{Red}(\mathscr{B})}g(x,x',0, \alpha)-
\sum_{x\in \mathrm{Red}(\mathscr{B}^*)}g\left(\frac{1}{x}, \frac{1}{x'}, 0,\alpha\right)
\end{align*}
the conclusion follows.
\end{proof}

\medskip

\begin{proof}[\textbf{Proof of Theorem \textup{\ref{v-z}}}]
Let us define, for $x>0,$
\begin{align}\label{v0}
V_0(x;\alpha,\beta)&:= -x^{2k-2}\mathscr{F}_{2k}\left( \frac{1}{x}; \beta, \alpha\right)  +\frac{1}{4x}\mathrm{Li}_{2k}(e^{2\pi i \alpha})+\frac{1}{4}x^{2k-1} \mathrm{Li}_{2k}(e^{2\pi i\beta})\nonumber\\
&=\mathscr{F}_{2k}\left(x;\alpha,\beta \right) -\frac{3}{4x} \mathrm{Li}_{2k}(e^{2\pi i \alpha})-\frac{3}{4}x^{2k-1} \mathrm{Li}_{2k}(e^{2\pi i   \beta})\nonumber\\
&\qquad-\sum_{r=1}^{2k-1}(-x)^{r-1}\mathrm{Li}_{2k-r}(e^{2\pi i \alpha})\mathrm{Li}_r(e^{2\pi i \beta}),
\end{align}
where the last step follows upon employing \eqref{two term}.  Let us also define
\begin{align}
&V(x;\alpha,\beta):=V_0(|x|;\alpha,\beta)+\frac{3}{4x} 
  \mathrm{Li}_{2k}(e^{2\pi i \alpha})  
+ \frac{3}{4}x^{2k-1} \mathrm{Li}_{2k}(e^{2\pi i   \beta}), \,\,
x\in \mathbb{R}\backslash \{0\}.\nonumber
\end{align}
 So, by invoking two and three-term functional equations \eqref{two term} and  \eqref{three term}, we obtain
\begin{align*}
  \mathscr{F}_{2k}\left(x;\alpha ,\beta\right)&=
 V(x-1; \alpha-\beta, \beta) - x^{2k-2}V \left(\frac{x-1}{x};\alpha-\beta, \alpha\right)+\frac{x^{2k-1}-1}{x-1}\mathrm{Li}_{2k}\left(e^{2\pi i(\alpha-\beta)}\right)\\
 &\quad-\sum_{r=1}^{2k-1}(-1)^{r-1}(x-1)^{r-1}\mathrm{Li}_{r,2k-r}\left(e^{2\pi i\beta},e^{2\pi i(\alpha-\beta)}\right)\\
 &\quad+\sum_{r=1}^{2k-1}(-1)^{r-1}(x-1)^{r-1}\mathrm{Li}_{2k-r}\left(e^{2\pi i(\alpha-\beta)}\right)\mathrm{Li}_r\left(e^{2\pi i\beta}\right)\\
 &=V(x-1; \alpha-\beta, \beta) - x^{2k-2}V \left(\frac{x-1}{x};\alpha-\beta, \alpha\right)\\
&\quad+\mathrm{(polynomial\ in}\ x \ \mathrm{of\ degree} \leq 2k-2).
\end{align*}
Polynomials of degree $\leq 2k-2$ belong to the kernel of the operator $\mathscr{D}_{k-1}$ (see \cite{VZ}), therefore, upon letting $\alpha=\beta$, we obtain
\begin{align}
\left(\mathscr{D}_{k-1}\mathscr{F}_{2k}\right)\left(x,y; \alpha ,\alpha\right)=\left(\mathscr{D}_{k-1}V\right)(x-1,y-1;0,\alpha)-\left(\mathscr{D}_{k-1}V\right)\left(\frac{x-1}{x},\frac{y-1}{y};0,\alpha\right).\nonumber
\end{align}
Hypothesises of Lemma \ref{rz} are satisfied upon letting $F(x,y,\alpha,\alpha)=\left(\mathscr{D}_{k-1}\mathscr{F}_{2k}\right)\left(x,y; \alpha ,\alpha\right)$ and $g(x,y,0,\alpha)=(\mathscr{D}_{k-1}V)(x,y,0,\alpha)$. Thus, Theorem \ref{main} along with \eqref{deffxy} imply that $\mathscr{I}(\mathscr{B})=D^{k/2}\zeta(k,(\alpha,\alpha),\mathscr{B})$. Using the fact
 $\mathscr{D}_{k-1}(x^{2k-1})=(x-y)^k$ and $\mathscr{D}_{k-1}(\frac{1}{x})=-\left(-\frac{1}{x}+\frac{1}{y}\right)^{k}$ (see \cite{VZ})  replace $g(x,y,0, \alpha)$ by 
 $G_0(x,y,0,\alpha) = \mathscr{D}_{k-1}(V_0(|x|, 0,\alpha).$ Using \eqref{v0}, we have
 \begin{align*}
 &V_0(|x|;0,\alpha)+x^{2k-2}V_0\left(\frac{1}{|x|};0, \alpha\right)\\
  &=\mathscr{F}_{2k}\left(|x|;0, \alpha\right)+|x|^{2k-2}\mathscr{F}_{2k}\left(\frac{1}{|x|};0, \alpha\right)-\frac{3}{4}\left(\frac{1}{|x|}+|x|^{2k-1}\right)\left(\zeta(2k)+\mathrm{Li}_{2k}(e^{2\pi i   \alpha})\right)\\
&\quad-\sum_{r=0}^{ 2k-1 }(-1)^{ r-1}\left(|x|^{r-1}+|x|^{2k-r-1}\right) \zeta(2k-r) \mathrm{Li}_{r}(e^{2\pi i\alpha}).
\end{align*}
Applying the operator $\mathscr{D}_{k-1}$ on both sides of the above equation gives us
 \begin{align*}
 &\mathscr{D}_{k-1}(V_0)(|x|,|y|;0,\alpha)+(-1)^{k-1}\mathscr{D}_{k-1}(V_0)\left(\frac{1}{|x|},\frac{1}{|y|};0, \alpha\right)\\
&=\mathscr{D}_{k-1}\left(\mathscr{F}_{2k}\left(|x|;0, \alpha\right)+|x|^{2k-2}\mathscr{F}_{2k}\left(\frac{1}{|x|};0, \alpha\right)-\frac{3}{4}\left(\frac{1}{|x|}+|x|^{2k-1}\right)\left(\zeta(2k)+\mathrm{Li}_{2k}(e^{2\pi i   \alpha})\right)\right.\\
&\left.\quad+\sum_{r=0}^{ 2k-1 }\left(|x|^{2r-1}+|x|^{2k-2r-1}\right) \zeta(2k-2r) \mathrm{Li}_{2r}(e^{2\pi i\alpha})\right).
\end{align*}
Thus, $G_0(x,y,0, \alpha) + (-1)^{k-1} G_0(\frac{1}{x}, \frac{1}{y},0,\alpha)=
 W_k(x,y, \alpha , \alpha).$ Result now follows from Corollary \ref{rc}. 
\end{proof}

\noindent  {\bf{Acknowledgements:}} We sincerely thank the referee for providing valuable suggestions and insightful corrections, which have greatly improved the exposition and clarity of our work. Authors would like to express their gratitude to Prof. Don Zagier for providing valuable comments regarding the well-definedness of $\zeta(s, (\alpha, \beta), \mathscr{B} ) $ in \eqref{qdr}. 
The first author was partially supported by   NRF $2022R1A2B5B01001871 , 2021R1A6A1A1004294412$ and the second author  was partially supported by the Grant ANRF/ECRG/2024/003222/PMS of the Anusandhan National Research Foundation (ANRF), Govt. of India, and CPDA and FIG grants of IIT Roorkee. Both the authors thank these institutions for the support.

\noindent
\textbf{Conflict of interest statement:} On behalf of all authors, the corresponding author states that there is no conflict of interest.


\begin{thebibliography}{00}


\bibitem{be}
B.~C.~Berndt, \emph{Generalized Dedekind eta-functions and generalized Dedekind sums}, Trans. Amer. Math. Soc. \textbf{178} (1973), 495--508.


\bibitem{bg}
D.~Bump and D.~Goldfeld, \emph{A Kronecker limit formula for cubic fields}, Modular forms (Durham, 1983), Ellis Horwood Ser. Math. Appl.: Statist. Oper. Res., Horwood, Chichester, 1984, pp. 43--49.


\bibitem{CK} 
Y.~Choie and R.~Kumar, \emph{Arithmetic properties of the Herglotz-Zagier-Novikov function}, Adv. Math. \textbf{433} (2023): 109315.

\bibitem{ck2} 
------------, \emph{Period function of Maass forms from Ramanujan's lost notebook}, Adv. Math. \textbf{474} (2025): 110317.



\bibitem{Cho-Sel} 
S.~Chowla and A.~Selberg. \emph{On Epstein’s zeta function (I)} , Proc. Nat. Acad. Sci. U.S.A. \textbf{35} (1949), 371--374.

 
\bibitem{cohen}
H.~Cohen, \emph{Number Theory-Volume II: Analytic and Modern Tools}, Graduate Texts in Mathematics Vol. 240: Springer-Verlag: New York 2007.

\bibitem{dit}
W.~Duke, \"{O}.~Imamo$\overline{\mathrm{g}}$lu and \'{A}.~T\'{o}th, \emph{Kronecker's first limit formula, revisited}, Res. Math. Sci. \textbf{5}(2), 20 (2018).


\bibitem{ggl}
H.~Gangl, A.~B.~Goncharov and A.~M.~Levin, \emph{Multiplepolylogarithms, polygons, trees and algebraic cycles, in: Algebraicgeometry, part 2} (Seattle 2005), Proc. Sympos. Pure Math. 80, AmericanMathematical Society, Providence (2009), 547--593.

\bibitem{hecke}
E.~Hecke, \emph{\"{U}ber die Kroneckersche Grenzformel f\"{u}r reelle quadratische K\"{o}rper und die Klassenzahl relativ-abelscher K\"{o}rper}, Verhandl. d. Naturforschenden Gesell. i. Basel~\textbf{28} (1917), 363--372.

\bibitem{H} 
G.~Herglotz, \emph{\"{U}ber die Kroneckersche Grenzformel f\"{u}r reelle, quadratische K\"{o}rper I}, Ber.~Verhandl.~S\"{a}chsischen~Akad.~Wiss.~Leipzig~\textbf{75} (1923), 3--14.


\bibitem{kopp thesis}
G.~S.~Kopp, \emph{Indefinite Theta Functions and Zeta Functions}, Thesis (Ph.D.)–University of Michigan. 2017. 120 pp.

\bibitem{kopp}
------------, \emph{A Kronecker limit formula for indefinite zeta functions}, Res. Math. Sci. \textbf{10}(24): 1--22, 2023.

\bibitem{Kro}  
L.~Kronecker, \emph{Zur Theorie der elliptischen Modulfunktionen, Werke}, 4, 347--496 and 5, 1--132, Leipzig , 1929.


\bibitem{lerch}
M.~Lerch, \emph{Note sur la fonction $\mathfrak{K}(w,x,s)=\sum_{k=0}^\infty\frac{e^{2k\pi ix}}{(w+k)^s}$ }, Acta Math. \textbf{11}(14), pp. 19--24 (1887).

\bibitem{lewin} 
L.~Lewin, \emph{Polylogarithms and associated functions}, New York: North-Holland 1981.


\bibitem{lew} 
J.~Lewittes, \emph{Analytic continuation of Eisenstein series},
Trans. Amer. Math. Soc. \textbf{171} (1972), 469--490.


\bibitem{lip}
R.~Lipschitz, \emph{Untersuchung der Eigenschaften einer Gattung von undendlichen Reihen}, J. Reine Angew. Math., 127--156, 1889.


\bibitem{mw}
H.~Muzaffar, K.~S.~Williams, \emph{A restricted Epstein zeta function and the evaluation of some definite integrals}, Acta Arithm. \textbf{104}(1), pp. 23--66 (2001).


\bibitem{novikov}
A.~P.~Novikov, \emph{Knoecker's limit formula in a real quadratic field}, Math. USSR- Izv. 17 (1981), 147--176.

\bibitem{nist}
F.~W.~J.~Olver, D.~W.~Lozier, R.~F.~Boisvert, C.~W.~Clark (eds.), \emph{NIST Handbook of Mathematical Functions} Cambridge University Press, Cambridge (2010)

\bibitem{parsvel}
R.~B.~Paris and D.~Kaminski, \emph{Asymptotics and Mellin-Barnes Integrals, Encyclopedia of Mathematics and its Applications}, 85, Cambridge University Press, Cambridge, 2001.

\bibitem{RZ} 
D.~Radchenko and D.~Zagier, \emph{Arithmetic properties of the Herglotz function}, J. Reine Angew. Math. \textbf{797} (2023) 229--253.

\bibitem{ram} K.~Ramachandra, \emph{Some applications of Kronecker's limit formulas}, Ann. of Math. \textbf{80} (1964), 104--148.  

\bibitem{sh} 
T.~Shintani, \emph{On Kronecker limit formula for real quadratic fields}, Proc. Jpn. Acad. \textbf{52}(7), 355--358 (1976).

\bibitem{sh1}  
------------, \emph{On a Kronecker limit formula for real quadratic fields},  J. Fac. Sci. Univ. Tokyo Sect. IA Math. \textbf{24}(1), 167--199 (1977) . 


\bibitem{sh2} 
------------, \emph{ A proof of the classical Kronecker limit formula}, Tokyo J. Math. \textbf{3}(2), 191--199 (1980) . 
 

\bibitem{Se} 
C.~L.~Siegel, \emph{Lectures on Advanced Analytic Number Theory}, Tata Institute of Fundamental Research, Bombay (1961).

\bibitem{stark1} 
H.~M.~Stark, \emph{Values of $L$-functions at $s = 1.$ I. $L$-functions for quadratic forms}, Adv. Math., \textbf{7}(3):301--343, 1971.

\bibitem{stark2} 
------------, \emph{L-functions at s = 1. II. Artin L-functions withrational characters}, Adv. Math., \textbf{17}(1) 60--92, 1975.

\bibitem{stark3} 
------------, \emph{L-functions at s = 1. III. Totally real fields and Hilbert’s twelfth problem}, Adv. Math., \textbf{22}(1) 64--84, 1976.

\bibitem{stark4} 
------------, \emph{L-functions at s = 1. IV. First derivatives at s = 0}, Adv. Math., \textbf{35}(3) 197--235, 1980.


\bibitem{temme}
N.~M.~Temme, \emph{Special functions: An introduction to the classical functions of mathematical physics}, Wiley-Interscience Publication,New York, 1996.

\bibitem{VZ} 
M.~Vlasenko and D.~Zagier, \emph{Higher Kronecker ``limit" formulas for real quadratic fields}, J. reine angrew. Math \textbf{679} (2013), 23--64.

\bibitem{weil}
A.~Weil, \emph{Elliptic functions according to Eisenstein and Kronecker}, Classics in Mathematics, Springer-Verlag, Berlin, 1999. 

\bibitem{ww}
E.~T.~Whittaker and G.~N.~Watson, \emph{A course of Modern Analysis}, Cambridge Mathematical Library, Cambridge University Press,Cambridge, 1996.

\bibitem{Z} 
D.~Zagier,  \emph{A Kronecker limit formula for real quadratic fields}, Math. Ann. \textbf{213} (1975), 153--184.

\bibitem{zagier} 
------------, \emph{Quelques cons\'{e}quences surprenantes de la cohomologie de $\textup{SL}_{2}(\mathbb{Z})$}, Le\c{c}ons de Math\'{e}matiques d'aujourd'hui, Cassini, Paris (2000), 99--123.


\end{thebibliography}
\end{document}